\newcommand{\BZ}{{\mathbb{Z}}}
\newcommand{\BN}{{\mathbb{N}}}
\newcommand{\BR}{{\mathbb{R}}}
\newcommand{\BC}{{\mathbb{C}}}
\newcommand{\BQ}{{\mathbb{Q}}}
\newcommand{\BG}{{\mathbb{G}}}
\newcommand{\BH}{{\mathbb{H}}}
\newcommand{\QQ}{{\mathbf{Q}}}
\newcommand{\RR}{{\mathbf{R}}}
\newcommand{\gd}{\delta}
\newcommand{\gC}{\Gamma}
\newcommand{\gs}{\sigma}
\newcommand{\gS}{\Sigma}
\newcommand{\gO}{\Omega}
\newcommand{\gep}{\epsilon}
\newcommand{\gL}{\Lambda}
\newcommand{\Ad}{\mathrm{Ad}}
\newcommand{\vol}{\mathrm{vol}}
\newcommand{\SL}{\mathrm{SL}}
\newcommand{\GL}{\mathrm{GL}}
\newcommand{\PSL}{\mathrm{PSL}}
\newcommand{\SO}{\mathrm{SO}}
\newcommand{\irs}{\mathrm{IRS}}
\newcommand{\sub}{\mathrm{Sub}}
\newcommand{\act}{\curvearrowright}
\newtheorem{prop}{Proposition}[section]
\newtheorem{thm}[prop]{Theorem}
\newtheorem{lem}[prop]{Lemma}
\newtheorem{cor}[prop]{Corollary}
\newtheorem{conj}[prop]{Conjecture}
\newtheorem*{conj*}{Conjecture}
\theoremstyle{definition}
\newtheorem{defn}[prop]{Definition}
\newtheorem{rem}[prop]{Remark}
\newtheorem*{rem*}{Remark}
\newtheorem{exam}[prop]{Example}
\newtheorem{prob}[prop]{Problem}
\newtheorem{ques}[prop]{Question}
\newtheorem{clm}[prop]{Claim}
\newtheorem{theorem}{Theorem}
\newtheorem{acknowledgement}[theorem]{Acknowledgement}
\newtheorem{corollary}[theorem]{Corollary}
\newtheorem{definition}[theorem]{Definition}
\newcommand{\N}{{\mathbf{N}}}
\newcommand{\R}{{\mathbf{R}}}
\newcommand{\Sub}{\operatorname{Sub}}
\newcommand{\Cay}{\operatorname{Cay}}
\newcommand{\Spec}{\operatorname{Spec}}
\newcommand{\inj}{\operatorname{Inj}}
\newcommand{\IRS}{\operatorname{IRS}}
\title{A view on Invariant Random Subgroups and Lattices}
\author{Tsachik Gelander}
\begin{document}
\maketitle

\begin{abstract}
For more than half a century  
lattices in Lie groups played an important role in geometry, number theory and group theory.
Recently the notion of  
Invariant Random Subgroups (IRS) 
emerged as a natural generalization of lattices. It is thus intriguing to extend results from the theory of lattices to the context of IRS, and to study lattices by analyzing the compact space of all IRS of a given group. This article focuses on the interplay between lattices and IRS, mainly in the classical case of semisimple analytic groups over local fields.
\end{abstract}

%\section{introduction}

Let $G$ be a locally compact group. We denote by $\sub(G)$ the space of closed subgroups of $G$ equipped with the Chabauty topology. 
The compact space $\sub(G)$ is usually too complicated to work with directly. However, considering a random point in $\sub(G)$ is often much more effective.
Note that $G$ acts on $\sub(G)$ by conjugation. An invariant random subgroup (or shortly IRS) is a $G$-invariant probability measure on $\sub(G)$. We denote by $\irs(G)$ the space of all IRSs of $G$ equipped with the $w^*$-topology. By Riesz' representation theorem and Alaoglu's theorem, $\irs(G)$ is compact. 

The Dirac measures in $\irs(G)$ correspond to normal subgroups. Any lattice $\gC$ in $G$ induces an IRS $\mu_\gC$ which is defined as the push forward of the $G$-invariant probability measure from $G/\gC$ to $\sub(G)$ via the map $g\gC\mapsto g\gC g^{-1}$. 

More generally consider a probability measure preserving action  $G\act (X,m)$. By a result of Varadarajan, the stabilizer of almost every point in $X$ is closed in $G$. Moreover, the stabilizer map $X\to \text{Sub}_G,~x\mapsto G_x$ 
is measurable, and hence one can push the measure $m$ to an IRS on $G$. In other words the random subgroup is the stabilizer of a random point in $X$.
%This reflects the connection between invariant random subgroups and pmp actions. Moreover, 
In a sense, the study of pmp $G$-spaces can be divided to 
the study of stabilizers (i.e. IRSs), 
the study of orbit spaces
and the interplay between the two. 
Vice versa, every IRS arises (non-uniquely) in this way (see \cite[Theorem 2.6]{7S}). 
%Moreover, every IRS in a locally compact group $G$ is the stabilizer of some pmp action (\cite[Theorem 2.6]{7S}).
%This was proved in \cite{AGV} for discrete groups and in \cite[Theorem 2.4]{7S} for general $G$. The first thing that comes to mind is to take the given $G$ action on $(\text{Sub}_G,\mu)$, but then the stabilizer of a point $H\in\text{Sub}_G$ is $N_G(H)$ rather than $H$. To correct this one can consider the larger space $\text{Cos}_G$ of all cosets of all closed subgroups, as a measurable $G$-bundle over $\text{Sub}_G$. Defining an appropriate invariant measure on $\text{Cos}_G\times\BR$ and replacing each fiber by a Poisson process on it, gives the desired probability space.
%\medskip

%\tableofcontents

%Since it has been reborn 
Since its rebirth in the beginning of the current decade (see \S \ref{sec:history} for a short summery of the history of IRS), the topic of IRS played an important role in various parts of group theory, geometry and dynamics,  and attracted the attention of many mathematicians for various different reasons. Not aiming to give an overview, I will try to highlight here several aspects of the evolving theory. 

%%%%%%%%%%%%%%%%%%

\section{IRS and Lattices}
IRSs can be considered as a generalisation of lattices, and one is tempted to extend results from the theory of lattices to IRS. In the other direction, as often happens in mathematics where one considers random objects to prove result about deterministic ones, the notion of IRS turns out to yield an extremely powerful tool to study lattices. In this section I will try to give a taste of the interplay between IRSs and Lattices focusing mainly on the second point of view. 
Attempting to expose the phenomenon in a rather clear way, avoiding technicality, I will assume throughout most of this section that $G$ is a noncompact simple Lie group, although most of the results can be formulated in the much wider setup of semisimple analytic groups over arbitrary local fields, see \S \ref{par:general-versions}.

\subsection{Borel Density}
Let $\text{P}\sub(G)$ denote the space of proper closed subgroups. Since $G$ is an isolated point in $\sub(G)$ (see \cite{To,Ku}) we deduce that $\text{P}\sub(G)$ is compact. Letting $\text{PIRS}(G)$ denote the subspace of $\irs(G)$ consisting of the measures supported on $\text{P}\sub(G)$, we deduce:

\begin{lem}\label{lem:PSUB}
The space of proper IRSs, $\text{PIRS}(G)$ is compact.
\end{lem}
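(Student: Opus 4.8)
The plan is to exhibit $\text{PIRS}(G)$ as a closed subset of the compact space $\irs(G)$; since a closed subspace of a compact space is compact, this suffices. The starting point is the consequence of $G$ being isolated in $\sub(G)$. As $\sub(G)$ is Hausdorff, the singleton $\{G\}$ is closed, and being an isolated point it is also open; hence $\{G\}$ is clopen and so is its complement $\text{P}\sub(G)=\sub(G)\setminus\{G\}$. In particular the indicator function $\mathbf 1_{\{G\}}$ is continuous on $\sub(G)$.

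I would then reformulate the support condition. A probability measure $\mu$ on $\sub(G)$ is supported on the closed set $\text{P}\sub(G)$ exactly when $\mu(\text{P}\sub(G))=1$, that is, when $\mu(\{G\})=0$. Thus
\[
\text{PIRS}(G)=\{\mu\in\irs(G)\st \mu(\{G\})=0\}=\Phi^{-1}(\{0\}),
\]
where $\Phi\colon\irs(G)\to[0,1]$ is the evaluation map $\Phi(\mu)=\mu(\{G\})=\int_{\sub(G)}\mathbf 1_{\{G\}}\,d\mu$. Because $\mathbf 1_{\{G\}}$ is continuous, $\Phi$ is continuous for the $w^*$-topology by the very definition of that topology, so $\text{PIRS}(G)$, being the preimage of the closed set $\{0\}$, is closed in $\irs(G)$. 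As $\irs(G)$ is compact (Riesz and Alaoglu, as recalled above), the closed subset $\text{PIRS}(G)$ is compact.

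The only real input is the isolation of $G$ in $\sub(G)$, and this is exactly where the potential obstacle lies: a priori, a $w^*$-limit of proper IRSs could concentrate some mass on the full group $G$ and thereby fail to be proper. Isolation rules this out, since it makes the mass $\mu\mapsto\mu(\{G\})$ a continuous, hence limit-stable, quantity. If one preferred not to use the clopen structure, the same conclusion would follow from the Portmanteau inequality $\limsup_n\mu_n(F)\le\mu(F)$ for the closed set $F=\text{P}\sub(G)$, but invoking continuity of $\Phi$ keeps the argument, and the role of isolation, most transparent.
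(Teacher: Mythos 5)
Your proof is correct and follows essentially the same route as the paper: both rest on the isolation of $G$ in $\sub(G)$ to exhibit $\text{PIRS}(G)$ as a closed subset of the compact space $\irs(G)$. The paper phrases the key step as compactness of $\text{P}\sub(G)$, while you make the closedness explicit through continuity of the evaluation $\mu\mapsto\mu(\{G\})$ on the clopen set $\{G\}$; the difference is purely cosmetic.
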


Let us say that that an IRS $\mu$ is discrete if a random subgroup is $\mu$ almost surely discrete, and denote by $\text{DIRS}(G)$ the subspace of $\irs(G)$ consisting of discrete IRSs. The following is a generalization of the classical Borel Density Theorem:

\begin{thm}\label{thm:BDT}(Borel Density Theorem for IRS, \cite[Theorem 2.9]{7S})
Every proper IRS in $G$ is discrete, i.e. $\text{PIRS}(G)=\text{DIRS}(G)$. Moreover, for every $\mu\in \text{DIRS}(G)$, $\mu$-almost every subgroup is either trivial or Zariski dense.
\end{thm}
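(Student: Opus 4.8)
The plan is to reduce every assertion to a single principle: a $G$-invariant probability measure on a projective $G$-variety built from the adjoint representation must be supported on $G$-fixed points, and since $G$ is simple the adjoint representation $\Ad\colon G\to\GL(\fg)$ is irreducible and nontrivial, so the only $\Ad(G)$-invariant subspaces of $\fg$ are $0$ and $\fg$. Concretely, I would first record the measure-theoretic form of Furstenberg's lemma: for $G$ noncompact simple, any $G$-invariant probability measure on a Grassmannian $\gr_d(\fg)$ (equivalently, via the Pl\"ucker embedding, on $\BP(\wedge^d\fg)$) is concentrated on the $\Ad(G)$-fixed points, i.e. on the $\Ad(G)$-invariant $d$-dimensional subspaces; by irreducibility there are none unless $d\in\{0,\dim\fg\}$. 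The proof of this lemma is the usual contraction argument: an unbounded sequence $g_n\to\infty$ acts on projective space with north--south dynamics (read off from the Cartan decomposition), so an invariant measure can only sit on the attracting and repelling subspaces, and simplicity prevents these from being proper and $\Ad(G)$-invariant.

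For the discreteness statement I would use the $G$-equivariant measurable map $H\mapsto\Lie(H^0)\in\gr(\fg)$ sending a closed subgroup to the Lie algebra of its identity component; equivariance is simply $\Lie((gHg^{-1})^0)=\Ad(g)\Lie(H^0)$. Pushing a proper IRS $\mu$ forward gives an $\Ad(G)$-invariant probability measure on $\gr(\fg)$, so by the lemma $\Lie(H^0)\in\{0,\fg\}$ almost surely. The value $\fg$ forces $H^0=G$, i.e. $H=G$, which is excluded because $\mu$ is proper; hence $\Lie(H^0)=0$ a.s., meaning $H$ is discrete. This proves $\text{PIRS}(G)\subseteq\text{DIRS}(G)$. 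The reverse inclusion is immediate, since a discrete subgroup of the positive-dimensional group $G$ is proper, giving $\text{DIRS}(G)\subseteq\text{PIRS}(G)$ and hence the claimed equality.

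For the dichotomy I would run the same machine on the Zariski closure. The map $H\mapsto\Lie(\ol{H}^Z)\in\gr(\fg)$ is again measurable and $G$-equivariant, so for $\mu\in\text{DIRS}(G)$ the lemma gives $\Lie(\ol{H}^Z)\in\{0,\fg\}$ a.s. If $\Lie(\ol{H}^Z)=\fg$ then $(\ol{H}^Z)^0=G$ and $H$ is Zariski dense; if $\Lie(\ol{H}^Z)=0$ then $\ol{H}^Z$ is $0$-dimensional, hence finite, so $H$ itself is finite. Thus it only remains to rule out the possibility that $\mu$ charges the set of nontrivial finite subgroups.

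The hard part will be exactly this last step, because a nontrivial finite subgroup need not have any nonzero fixed vector in $\fg$ (for instance a Heisenberg/Pauli-type finite subgroup of a complex simple group has trivial centralizer), so the adjoint argument does not detect it. My plan here is a covolume obstruction. Passing to the ergodic decomposition of $\mu$ (each component is again an IRS) and using that $H\mapsto|H|$ is a conjugation-invariant, hence a.e.\ constant, function, I would reduce to an ergodic IRS supported on finite subgroups of a fixed order $n$. For a nontrivial finite $F$ one has $[N_G(F):Z_G(F)]<\infty$ while $Z_G(F)$ is a \emph{proper} algebraic subgroup of the simple group $G$ (properness uses that $Z_G(F)=G$ would force $F\subseteq Z(G)$), so $N_G(F)$ has infinite covolume and the conjugacy locus $G/N_G(F)$ carries no $G$-invariant probability measure; geometrically, $F$ has a proper totally geodesic fixed-point set in $X=G/K$, and there is no $G$-invariant probability measure on the space of such subspaces. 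Turning this into a contradiction with the existence of $\mu$ requires a measurable selection across the continuum of subgroups in the support, so as to manufacture a $G$-invariant probability measure on a homogeneous space that admits none; making this selection and the covolume estimate uniform is the main technical obstacle, after which the finite part collapses to the central one, which is trivial for center-free $G$.
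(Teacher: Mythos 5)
Your treatment of the two main assertions coincides with the paper's own proof: the measurable, $G$-equivariant maps $H\mapsto\Lie(H)$ and $H\mapsto\Lie(\ol{H}^Z)$ into the Grassmannian of $\fg$, combined with Furstenberg's no-invariant-measure lemma and the irreducibility of $\Ad$, are exactly how the paper deduces that a proper IRS is discrete and that almost every subgroup is finite or Zariski dense. The gap is in the last step, and it is closed by a single observation you are missing --- which is in fact the \emph{first} line of the paper's proof: in a Lie group there are only \emph{countably many conjugacy classes of nontrivial finite subgroups} (every finite subgroup is conjugate into a fixed maximal compact $K$, and by the classical rigidity of finite subgroups, Chabauty-nearby subgroups of the same finite order are conjugate, so separability of $\sub(G)$ leaves room for only countably many classes). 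Granting this, your own ergodic reduction finishes the argument with no selection problem at all: an ergodic IRS charging the invariant set of nontrivial finite subgroups must charge one of the countably many conjugacy classes, hence by ergodicity gives full measure to a single orbit $G\cdot F\cong G/N_G(F)$; the IRS then \emph{is} a $G$-invariant probability measure on that one homogeneous space, and your covolume obstruction ($Z_G(F)$ is a proper algebraic subgroup of finite index in $N_G(F)$, and $N_G(F)$ is Zariski closed and proper, so by the classical Borel density theorem it cannot have finite covolume) yields the contradiction. No measurable selection across a continuum of orbits is ever needed.

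By contrast, the geometric route you flag as the ``main technical obstacle'' is genuinely obstructed if the countability fact is avoided. The space of proper totally geodesic subspaces of $X=G/K$ does \emph{not} decompose into countably many $G$-orbits (already for geodesics in rank at least $2$ there is a continuum of $K$-orbits of directions), and the assertion that this space carries no $G$-invariant probability measure is not a quotable standard lemma --- it is of essentially the same depth as the statement being proved, and establishing it would again require a Furstenberg-type or countability argument. So, as written, your proposal is correct and matches the paper up to and including the reduction to finite subgroups of a fixed order $n$, but the finite case should be completed by the countability of conjugacy classes rather than by a selection theorem.
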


In order to prove Theorem \ref{thm:BDT} one first observes that there are only countably many conjugacy classes of non-trivial finite subgroups in $G$, hence the measure of their union is zero with respect to any non-atomic IRS. Then one can apply the same idea as in Furstenberg's proof of the classical Borel density theorem \cite{Fur}. Indeed, taking the Lie algebra of $H\in\sub(G)$ as well as of its Zariski closure induce measurable maps (see \cite[\S 4]{non-archimedean})
$$
 H\mapsto \text{Lie}(H),~H\mapsto\text{Lie}(\overline{H}^Z).
$$
As G is noncompact, Furstenberg's argument implies that the Grassman variety of non-trivial subspaces of $\text{Lie}(G) $ does not carry an $\Ad(G)$-invariant measure. It follows that $ \text{Lie}(H)= 0$ and $\text{Lie}(\overline{H}^Z)\in\{
\text{Lie}(G),0\}$
almost surely, and the two statements of the theorem follow.

%and since a non-trivial Grassman variety associated to subspaces of Lie$(G)$ does not carry an Ad$(G)$-invariant measure (because $G$ is noncompact) one deduces that $\text{Lie}(H)=0$, i.e. that $H$ is discrete, and $\text{Lie}(\overline{H}^Z)=\text{Lie}(G)$ almost surely (see \cite{7} for more details).

%%%%%%%%%%%%%

\subsection{Weak Uniform Discreteness}

Let $U$ be an identity neighbourhood in $G$. A family of subgroups $\mathcal{F}\subset\sub(G)$ is called {\it $U$-uniformly discrete} if $\gC\cap U=\{1\}$ for all $\gC\in \mathcal{F}$.

\begin{defn}
A family $\mathcal{F}\subset\text{DIRS}(G)$ of invariant random subgroups is said to be {\it weakly uniformly discrete} if for every $\gep>0$ there is an identity neighbourhood $U_\gep\subset G$ such that
$$
 \mu (\{\gC\in\sub_G:\gC\cap U_\gep\ne\{1\}\})<\gep
$$
for every $\mu\in \mathcal{F}$.
\end{defn}

%The quality of this definition is demonstrated in the following result (from \cite{wud}) which yield a valuble information (in particular when restricting to lattices) while being proved by an elementary argument.

A justification for this definition is given by the following result which is proved by an elementary argument and yet provides a valuable information:

\begin{thm}\label{thm:wud}
Let $G$ be a connected non-compact simple Lie group. Then $\text{DIRS}(G)$ is weakly uniformly discrete.
\end{thm}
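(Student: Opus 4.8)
The plan is to argue by contradiction, first converting the delicate open condition ``$\gC$ has a short nontrivial element'' into a robust \emph{closed} condition by passing to powers, and then invoking the compactness of $\text{DIRS}(G)=\text{PIRS}(G)$. Fix a left-invariant Riemannian metric $d$ on $G$ and write $B_r=\{g\in G: d(1,g)\le r\}$. The first and main step is an elementary geometric lemma based on the standard fact that a Lie group has no small subgroups: there is $\rho_0>0$ such that $B_{\rho_0}$ contains no nontrivial subgroup, and consequently, for every $\rho\le\rho_0$, any subgroup $\gC$ possessing a nontrivial element $\gamma\in B_{\rho/2}$ must also meet the closed annulus $A_\rho=\{g:\rho/2\le d(1,g)\le\rho\}$. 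Indeed $\langle\gamma\rangle$ cannot be contained in $B_{\rho/2}$, since it is a nontrivial subgroup; hence some power leaves $B_{\rho/2}$, and taking the first such power $\gamma^k$ together with the left-invariant estimate $d(1,\gamma^k)\le d(1,\gamma^{k-1})+d(1,\gamma)\le\rho/2+\rho/2$ places $\gamma^k$ in $A_\rho$. This is the crux: it upgrades ``meets $B_{\rho/2}\setminus\{1\}$'' to ``meets the compact annulus $A_\rho$'', and the set $C_\rho:=\{\gC:\gC\cap A_\rho\ne\emptyset\}$ is closed in the Chabauty topology because $\{\gC:\gC\cap A_\rho=\emptyset\}$ is a basic Chabauty-open set.

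Next I would run the compactness argument. Suppose the theorem fails: there are $\gep_0>0$ and $\rho_n\downarrow 0$ together with $\mu_n\in\text{DIRS}(G)$ with $\mu_n(\{\gC:\gC\cap B_{\rho_n/2}\ne\{1\}\})\ge\gep_0$, so by the lemma $\mu_n(C_{\rho_n})\ge\gep_0$. By Lemma \ref{lem:PSUB} together with Theorem \ref{thm:BDT}, the space $\text{DIRS}(G)=\text{PIRS}(G)$ is compact; passing to a subsequence with $\rho_{m+1}\le\rho_m/2$ I may assume $\mu_n\to\mu$ in the $w^*$-topology with $\mu\in\text{DIRS}(G)$. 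For $n\ge m$ any element of $A_{\rho_n}$ is a nontrivial element of $B_{\rho_m/2}$, so the lemma gives the nesting $C_{\rho_n}\subseteq C_{\rho_m}$ and hence $\mu_n(C_{\rho_m})\ge\gep_0$ for all $n\ge m$. Since $C_{\rho_m}$ is closed, the Portmanteau inequality yields $\mu(C_{\rho_m})\ge\limsup_n\mu_n(C_{\rho_m})\ge\gep_0$ for every $m$.

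Finally I would extract the contradiction. The sets $C_{\rho_m}$ are decreasing, so by continuity of $\mu$ from above $\mu\big(\bigcap_m C_{\rho_m}\big)\ge\gep_0$. A subgroup lying in every $C_{\rho_m}$ contains, for each $m$, an element at distance in $[\rho_m/2,\rho_m]$ from the identity, i.e. nontrivial elements converging to $1$; thus $\bigcap_m C_{\rho_m}$ consists of non-discrete subgroups. This forces $\mu$ to charge the non-discrete subgroups with mass $\ge\gep_0$, contradicting the fact that $\mu\in\text{DIRS}(G)$ is supported on discrete subgroups.

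The main obstacle, and essentially the only genuine idea needed, is the failure of the naive compactness argument: ``having a short nontrivial element'' is an open condition whose mass is only lower semicontinuous, and short elements can escape into the identity under Chabauty limits, so one cannot directly bound it from above. The annulus/no-small-subgroups lemma is exactly what circumvents this, replacing the escaping element by a power that lands in a fixed compact annulus and thereby reinstating a closed condition; after that, compactness of $\text{DIRS}(G)$ and the Portmanteau theorem finish the argument routinely. I note that torsion causes no difficulty: the no-small-subgroups input already guarantees that even a short torsion element has a power in the annulus.
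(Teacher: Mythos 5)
Your proof is correct and is essentially the paper's own argument in contrapositive form: your annulus lemma (a nontrivial element in $B_{\rho/2}$ has a power landing in the compact annulus $A_\rho$, by no-small-subgroups) is exactly the paper's Lemma \ref{K_n-open}, where a subgroup meets $U_n$ nontrivially iff it meets the compact set $U_n\setminus V$ with $V^2\subset U_n$. Likewise, your sequential-compactness-plus-Portmanteau contradiction is the dual of the paper's direct argument, which covers the compact space $\text{DIRS}(G)$ by the open sets $\mathcal{K}_{n,\gep}=\{\mu:\mu(K_n)>1-\gep\}$ and extracts a finite subcover.
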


Let $U_n, n\in\BN$ be a descending sequence of compact sets in $G$ which form a base of identity neighbourhoods, and set
$$
  K_n=\{\gC\in\sub_G:\gC\cap U_n=\{ 1 \}\}.
$$

Since $G$ has NSS (no small subgroups), i.e. there is an identity neighbourhood which contains no non-trivial subgroups, we have:

\begin{lem}\label{K_n-open}
The sets $K_n$ are open in $\sub(G)$.
\end{lem}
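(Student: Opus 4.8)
The plan is to prove the equivalent statement that the complement
$K_n^c=\{\gC\in\sub(G):\gC\cap U_n\neq\{1\}\}$ is closed. Since $G$ is a connected Lie group it is second countable, so $\sub(G)$ with the Chabauty topology is compact and metrizable, and it therefore suffices to argue with sequences. The only feature of the Chabauty topology I will use is its \emph{closedness property}: if $\gC_j\to\gC$ and $g_j\in\gC_j$ with $g_j\to g$, then $g\in\gC$.

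So I would take $\gC_j\to\gC$ with each $\gC_j\in K_n^c$ and, for each $j$, pick $\gamma_j\in(\gC_j\cap U_n)\setminus\{1\}$ (which exists by definition of $K_n^c$, since $1$ always lies in the intersection). Because $U_n$ is compact, after passing to a subsequence I may assume $\gamma_j\to g$ for some $g\in U_n$, and then $g\in\gC$ by the closedness property. If $g\neq 1$ I am immediately done, since then $g\in(\gC\cap U_n)\setminus\{1\}$ and $\gC\in K_n^c$.

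The remaining, and genuinely delicate, case is $g=1$, i.e. $\gamma_j\to 1$; this is exactly where the NSS (escape) property of $G$ must be used. For large $j$ the element $\gamma_j$ lies in the domain on which $\exp$ is a diffeomorphism, so I can write $\gamma_j=\exp(\xi_j)$ with $0\neq\xi_j\to 0$ in $\Lie(G)$. Normalising $\eta_j=\xi_j/\|\xi_j\|$ and passing to a further subsequence gives $\eta_j\to\eta$ with $\|\eta\|=1$. Now I fix $t>0$ and set $k_j=\lfloor t/\|\xi_j\|\rfloor\in\BN$; since $\|\xi_j\|\to 0$ one has $k_j\|\xi_j\|\to t$, whence $\gamma_j^{k_j}=\exp\!\big(k_j\|\xi_j\|\,\eta_j\big)\to\exp(t\eta)$. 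As $\gamma_j^{k_j}\in\gC_j$, the closedness property yields $\exp(t\eta)\in\gC$ for every $t>0$, so $\gC$ contains the nontrivial one-parameter subgroup $\{\exp(t\eta):t\in\BR\}$. Since $\eta\neq 0$ and $U_n$ is an identity neighbourhood, $\exp(t\eta)\in U_n\setminus\{1\}$ for all sufficiently small $t>0$, and again $\gC\cap U_n\neq\{1\}$.

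In both cases $\gC\in K_n^c$, so $K_n^c$ is sequentially closed and $K_n$ is open. I expect the main obstacle to be precisely the degenerate case $\gamma_j\to 1$: there the compactness of $U_n$ is useless, and one must exploit that $G$ has no small subgroups to manufacture, out of a sequence of ever-shorter nontrivial elements, an entire one-parameter subgroup inside the limit $\gC$; the extraction of the unit direction $\eta$ together with the choice of exponents $k_j\to\infty$ is the heart of the argument. This step is not merely technical: in a group admitting small subgroups the limit could meet $U_n$ trivially, and $K_n$ would fail to be open.
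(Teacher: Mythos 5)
Your proof is correct, but it takes a genuinely different route from the paper's. The paper argues in two lines: by NSS one picks an open identity neighbourhood $V\subset U_n$ containing no nontrivial subgroup with $V^2\subset U_n$; then a closed subgroup meets $U_n$ nontrivially iff it meets the compact set $U_n\setminus V$ (if a nontrivial $\gamma$ lies in $\Gamma\cap V$, the first power $\gamma^k$ escaping $V$ satisfies $\gamma^k=\gamma^{k-1}\gamma\in V^2\subset U_n$, so $\gamma^k\in U_n\setminus V$), and ``meeting a fixed compact set'' is a Chabauty-closed condition, so $K_n$ is open. You instead run a sequential argument (legitimate, as you note, since $\sub(G)$ is compact metrizable) and, in the delicate case $\gamma_j\to 1$, you do not invoke NSS as a black box but essentially reprove it Lie-theoretically: writing $\gamma_j=\exp(\xi_j)$, normalising directions, and choosing exponents $k_j=\lfloor t/\|\xi_j\|\rfloor$ to produce a full one-parameter subgroup $\{\exp(t\eta)\}$ inside the limit. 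Your computation is sound ($\gamma_j^{k_j}=\exp(k_j\|\xi_j\|\eta_j)\to\exp(t\eta)$, and the Chabauty closedness property applies along the subsequence). The trade-offs: the paper's argument is shorter and immediately portable to any locally compact group with NSS --- which matters, since the paper later recycles exactly this argument for groups with NDSS, where $K_n$ is only relatively open in $\sub_d(G)$ --- while your argument is tied to the Lie group structure via $\exp$, but yields strictly more information in the degenerate case (the limit group contains a one-parameter subgroup, hence is non-discrete), which is the classical Zassenhaus/Kazhdan--Margulis style of reasoning. Your closing remark that openness genuinely fails for groups with small subgroups is also correct (e.g.\ $p^k\BZ_p\to\{1\}$ in $\sub(\BQ_p)$ with each term meeting any fixed identity neighbourhood nontrivially).
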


\begin{proof}
Fix $n$ and
let $V\subset U_n$ be an open identity neighbourhood which contains no non-trivial subgroups, such that ${V^2}\subset U_n$. It follows that a subgroup $\gC$ intersects $U_n$ non-trivially iff it intersects $U_n\setminus V$. Since $U_n\setminus V$ is compact, the lemma is proved.  
\end{proof}

In addition, observe that the ascending union $\bigcup_n K_n$ exhausts $\sub_d(G)$, the set of all discrete subgroups of $G$. Therefore we have:

\begin{clm}\label{clm}
For every $\mu\in\text{DIRS}(G)$ and $\gep>0$ we have $\mu(K_n)>1-\gep$ for some $n$.\qed
\end{clm}

Let
$$
 \mathcal{K}_{n,\gep}:= \{\mu\in\text{DIRS}(G):\mu(K_n)>1-\gep\}.
$$
Since $\sub(G)$ is metrizable, it follows from Lemma \ref{K_n-open} that $\mathcal{K}_{n,\gep}$ is open. 
By Claim \ref{clm}, for any given $\gep>0$, the sets $\mathcal{K}_{n,\gep},~n\in\BN$ form an ascending cover of $\text{DIRS}(G)$. Since the latter is compact, we have $\text{DIRS}(G)\subset \mathcal{K}_{m,\gep}$ for some $m=m(\gep)$. 
It follows that 
$$
 \mu\big(\{ \gC\in\sub(G):\gC~\text{intersects}~U_m~\text{trivially}\}\big)>1-\gep,
$$
for every $\mu\in\text{DIRS}(G)$. Thus Theorem \ref{thm:wud} is proved.
\qed

\medskip

Picking $\gep<1$ and applying the theorem for the IRS $\mu_\gC$ where $\gC\le G$ is an arbitrary lattice, one deduces the Kazhdan--Margulis theorem \cite{KM}, and in particular that there is a positive lower bound on the volume of locally $G/K$-orbifolds:

\begin{cor}[Kazhdan--Margulis theorem]\label{cor:KM}
There is an identity neighbourhood $\gO\subset G$ such that for every lattice $\gC\le G$ there is $g\in G$ such that $g\gC g^{-1}\cap \gO=\{1\}$.
\end{cor}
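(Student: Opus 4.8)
The plan is to feed the IRSs coming from lattices into Theorem~\ref{thm:wud} and then convert a measure-theoretic smallness statement into an existence statement. First I would recall that to any lattice $\gC\le G$ one associates the IRS $\mu_\gC$, the push-forward of the $G$-invariant probability measure on $G/\gC$ under $g\gC\mapsto g\gC g^{-1}$. Since $\gC$ is discrete and discreteness is preserved under conjugation, every subgroup in the support of $\mu_\gC$ is discrete, so $\mu_\gC\in\text{DIRS}(G)$. The key structural observation is that $\mu_\gC$ is concentrated on the conjugacy class of $\gC$: being the push-forward of a probability measure along $g\gC\mapsto g\gC g^{-1}$, it assigns full mass to the set $\{g\gC g^{-1}:g\in G\}$.

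Next I would apply Theorem~\ref{thm:wud} with a fixed $\gep<1$, say $\gep=\tfrac12$. The theorem produces a single identity neighbourhood $\gO:=U_{1/2}$ such that
$$
 \mu\big(\{H\in\sub(G):H\cap\gO\ne\{1\}\}\big)<\tfrac12
$$
for every $\mu\in\text{DIRS}(G)$. The decisive feature is that $\gO$ is chosen once and for all, simultaneously for the entire space $\text{DIRS}(G)$; since each $\mu_\gC$ lies in this space, the same $\gO$ works for every lattice $\gC$, which is precisely the uniformity asserted by the corollary.

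It then remains to pass from positive measure to a genuine point. Write $A=\{H\in\sub(G):H\cap\gO\ne\{1\}\}$; this set is measurable because its complement is one of the open sets $K_n$ of Lemma~\ref{K_n-open}. Applying the displayed inequality to $\mu=\mu_\gC$ gives $\mu_\gC(A)<\tfrac12$, hence $\mu_\gC(A^c)>\tfrac12>0$. Since $\mu_\gC$ assigns full mass to $\{g\gC g^{-1}:g\in G\}$, the intersection of this conjugacy class with $A^c$ has positive measure, and is in particular nonempty. Any witness yields an element $g\in G$ with $g\gC g^{-1}\cap\gO=\{1\}$, as required.

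I do not expect a serious obstacle, since the entire content sits in Theorem~\ref{thm:wud}: the corollary is essentially its reformulation for the special IRSs $\mu_\gC$. The only steps needing care are the two elementary facts that $\mu_\gC\in\text{DIRS}(G)$ and that $\mu_\gC$ charges only conjugates of $\gC$, together with the (trivial) observation that a set of positive measure is nonempty. The genuinely hard part — obtaining a neighbourhood that is uniform over all lattices rather than one depending on $\gC$ — has already been absorbed into the weak uniform discreteness statement, which applies to the whole compact space $\text{DIRS}(G)$ at once. Finally, if one wishes to recover the classical covolume formulation, I would translate $g\gC g^{-1}\cap\gO=\{1\}$ into the assertion that $\gC\backslash G$ contains an embedded $\gO$-ball, yielding the uniform lower bound $\vol(\gC\backslash G)\ge\vol(\gO')$ for a suitable neighbourhood $\gO'$.
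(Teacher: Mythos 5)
Your proposal is correct and is essentially the paper's own argument: the paper deduces Corollary \ref{cor:KM} precisely by picking $\gep<1$ and applying Theorem \ref{thm:wud} to the IRS $\mu_\gC$, exactly as you do. The only cosmetic remark is that instead of invoking measurability of the conjugacy class $\{g\gC g^{-1}:g\in G\}$, one can argue directly with the push-forward: $\mu_\gC(A^c)>0$ means the preimage $\{g\gC\in G/\gC : g\gC g^{-1}\cap\gO=\{1\}\}$ has positive measure, hence is nonempty, which yields the desired $g$.
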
 

\begin{figure}[h]
    \centering
    \includegraphics[width=0.8\textwidth]{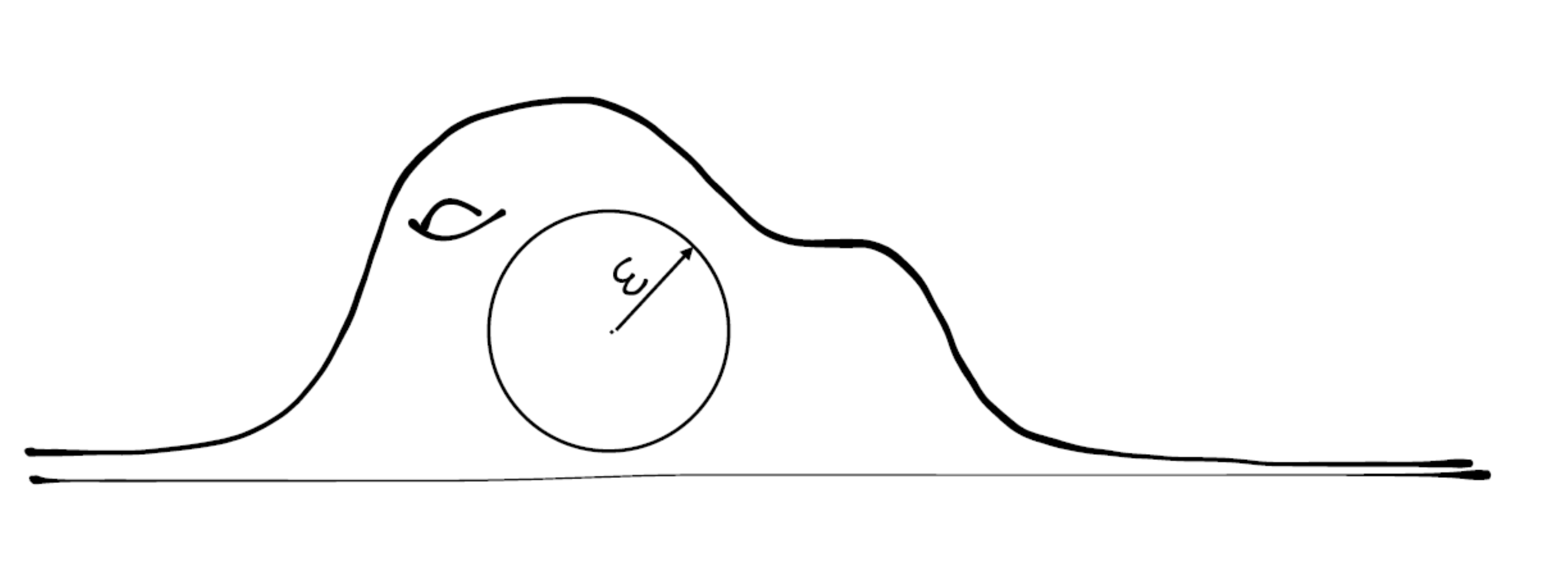}
    \caption{Every $X$-manifold has a thick part.}
    \label{fig:awesome_image}
\end{figure}

A famous conjecture of Margulis \cite[page 322]{Ma} asserts that the set of all torsion-free anisotropic arithmetic lattices in $G$ is $U$-uniformly discrete for some identity neighbourhood $U\subset G$. 
Theorem \ref{thm:wud} can be regarded as a probabilistic variant of this conjecture as it implies that all lattices in $G$ are jointly weakly uniformly discrete.

%Recall that every probability measure preserving $G$-space $(X,m)$ corresponds to an IRS, via the stabilizer map $X\ni x\mapsto G_x\in\sub(G)$ and that, vice versa, every IRS arises (non-uniquely) in this way (see \cite[Theorem 2.6]{7S}). Thus,
In the language of pmp actions
Theorem \ref{thm:wud} can be reformulated as follows:

\begin{thm}[p.m.p.\ actions are uniformly weakly locally free]\label{thm:p.m.p.}
For every $\gep>0$ there is an identity neighbourhood $U_\gep\subset G$ such that the stabilizers of $1-\gep$ of the points, in
any non-atomic probability measure preserving $G$-space $(X,m)$ are $U_\gep$-uniformly discrete. I.e.,
there is a subset $Y\subset X$ with $m(Y)>1-\gep$ such that $U_\gep\cap G_y=\{1\},~\forall y\in Y$.
% intersects trivially the stabilizer of every $y\in Y$.
%$u\cdot y\ne y$ for all $y\in Y$ and $u\in U$.
\end{thm}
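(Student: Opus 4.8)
The plan is to obtain this statement directly from Theorem~\ref{thm:wud} via the stabilizer correspondence between p.m.p.\ actions and invariant random subgroups recalled in the introduction. Given a non-atomic p.m.p.\ action $G\act(X,m)$, I would first invoke Varadarajan's theorem, so that the stabilizer $G_x$ is closed for $m$-almost every $x$ and the stabilizer map $s\colon X\to\sub(G),\ x\mapsto G_x$, is measurable. Pushing $m$ forward along $s$ yields a probability measure $\mu:=s_*m$ on $\sub(G)$, and the equivariance $G_{gx}=gG_xg^{-1}$ shows that $\mu$ is conjugation-invariant, that is $\mu\in\irs(G)$.

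The substantive step is to verify that $\mu$ belongs to $\text{DIRS}(G)$, since Theorem~\ref{thm:wud} speaks only about discrete IRS. Here I would appeal to the Borel density theorem for IRS (Theorem~\ref{thm:BDT}): every proper IRS is discrete. Thus it suffices to check that $\mu$ is proper, i.e.\ that $\mu(\{G\})=0$, equivalently that the locus of global fixed points is $m$-null; this is where the non-degeneracy of the action is genuinely used. Granting properness, the Furstenberg-type argument inside the proof of Theorem~\ref{thm:BDT} forces $\text{Lie}(G_x)=0$ almost surely, so $G_x$ is discrete almost surely and $\mu\in\text{DIRS}(G)$.

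With $\mu\in\text{DIRS}(G)$ established, the remainder is a translation. For the given $\gep>0$, take $U_\gep$ to be the identity neighbourhood produced by Theorem~\ref{thm:wud}; its defining feature is that it works simultaneously for \emph{every} $\mu\in\text{DIRS}(G)$, and this is exactly the uniformity over all actions asserted here. The set $B:=\{\gC\in\sub(G):\gC\cap U_\gep\ne\{1\}\}$ is measurable---indeed, for the corresponding index its complement is the open set $K_n$ of Lemma~\ref{K_n-open}---and Theorem~\ref{thm:wud} gives $\mu(B)<\gep$. Since $\mu=s_*m$, we get $m\big(s^{-1}(B)\big)=\mu(B)<\gep$, where $s^{-1}(B)=\{x\in X:G_x\cap U_\gep\ne\{1\}\}$. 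Setting $Y:=X\setminus s^{-1}(B)$ then gives $m(Y)>1-\gep$ with $U_\gep\cap G_y=\{1\}$ for all $y\in Y$, which is the assertion.

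I expect the discreteness step to be the only real obstacle: everything else is measure-theoretic bookkeeping, and the uniformity of $U_\gep$ across \emph{all} non-atomic p.m.p.\ $G$-spaces is inherited, with no extra work, from the uniformity already built into Theorem~\ref{thm:wud}, precisely because the stabilizer construction maps the entire class of such actions into the single compact family $\text{DIRS}(G)$.
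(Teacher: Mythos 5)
Your route is the intended one: the paper offers Theorem \ref{thm:p.m.p.} as a reformulation of Theorem \ref{thm:wud} and gives no separate proof, and your translation --- Varadarajan's theorem, pushing $m$ forward along the stabilizer map to get $\mu=s_*m\in\irs(G)$, feeding $\mu$ into Theorem \ref{thm:wud}, and pulling the conclusion back to $X$ --- is exactly what that reformulation amounts to. You also correctly isolate the one substantive point, namely that $\mu$ must be shown to lie in $\text{DIRS}(G)$, and the appeal to Theorem \ref{thm:BDT} (proper $\Rightarrow$ discrete) is the right tool for it.

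The gap is in the words ``granting properness''. Properness of $\mu$ means $\mu(\{G\})=0$, i.e.\ that the set of global fixed points of the action is $m$-null, and this simply does not follow from the stated hypothesis that $(X,m)$ is non-atomic: the trivial action of $G$ on $([0,1],\text{Leb})$ is a non-atomic p.m.p.\ $G$-space whose stabilizer IRS is $\delta_G$, and for it the conclusion fails for every identity neighbourhood. So the implication ``$(X,m)$ non-atomic $\Rightarrow$ $\mu$ proper'' that your proof needs is false, and no argument can close that step as written; what is true is that non-atomicity of the \emph{IRS} $s_*m$ (not of the space $(X,m)$) forbids an atom at $G$ and hence gives properness. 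The theorem must therefore be read with the implicit hypothesis that the action has essentially no global fixed points --- equivalently, that $s_*m$ gives no mass to $\{G\}$ --- which is precisely the hypothesis the paper itself makes explicit a few paragraphs later in the $p$-adic discussion (``probability $G$-space essentially with no global fixed points''), and which also follows from ergodicity combined with non-atomicity of $(X,m)$. Once the hypothesis is read this way, your argument is complete and coincides with the paper's intended proof; as it stands, the phrase ``this is where the non-degeneracy of the action is genuinely used'' papers over a step that the literal hypothesis cannot deliver.
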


%%%%%%%%%%%%%%%%

\subsection{Local Rigidity}

Observe that local rigidity implies Chabauty locally rigid:

\begin{prop}\label{thm:CLR}
Let $G$ be a connected Lie group and $\gC\le G$ a locally rigid lattice. Then $\gC$ is Chabauty locally rigid, i.e. the conjugacy class of $\Gamma$ is Chabauty open.
\end{prop}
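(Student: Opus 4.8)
The plan is to translate Chabauty convergence of subgroups into convergence of \emph{homomorphisms}, apply local rigidity there, and then transfer the resulting conjugacy back to $\sub(G)$. Since $\sub(G)$ is metrizable, it suffices to show that the conjugacy class of $\gC$ is sequentially open: whenever $\gC_n\to\gC$ in the Chabauty topology, $\gC_n$ is conjugate to $\gC$ for all large $n$ (the general case $\gC_n\to g\gC g^{-1}$ reduces to this by conjugating the whole sequence, as conjugation is a self-homeomorphism of $\sub(G)$). Throughout I use that $\gC$, being a lattice in a connected Lie group, is finitely presented, say $\gC=\langle \gamma_1,\dots,\gamma_k\mid w_1,\dots,w_m\rangle$.

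First I would produce approximating generators. By the definition of Chabauty convergence each generator is a limit $\gamma_i=\lim_n \gamma_i^{(n)}$ with $\gamma_i^{(n)}\in\gC_n$. I then want the assignment $\gamma_i\mapsto\gamma_i^{(n)}$ to define a genuine homomorphism $\rho_n\colon\gC\to G$, which forces me to check that every relation holds \emph{exactly}. Here the decisive input is the openness of the sets $K_n=\{\Lambda\in\sub(G):\Lambda\cap U_n=\{1\}\}$ from Lemma~\ref{K_n-open}: since $\gC$ is discrete it lies in some $K_N$, and as $K_N$ is open and $\gC_n\to\gC$ we get $\gC_n\cap U_N=\{1\}$ for all large $n$; i.e.\ the $\gC_n$ are uniformly discrete along the sequence. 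Now $w_j(\gamma_1^{(n)},\dots,\gamma_k^{(n)})\in\gC_n$ tends to $w_j(\gamma_1,\dots,\gamma_k)=1$, so for $n$ large it lies in $U_N$ and therefore equals $1$. Having only the finitely many relations $w_1,\dots,w_m$ is exactly what lets this happen simultaneously for a single large $n$, and this is where finite presentability is used. Thus $\rho_n$ is a well-defined homomorphism, and $\rho_n\to\iota$ in $\Hom(\gC,G)\subset G^k$ because its generators converge.

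Next I would invoke local rigidity: a neighbourhood of the inclusion $\iota$ in $\Hom(\gC,G)$ consists of $G$-conjugates of $\iota$, so for large $n$ there is $g_n\in G$ with $\rho_n=\Ad(g_n)\circ\iota$, that is $\gamma_i^{(n)}=g_n\gamma_i g_n^{-1}$ for every $i$. Since $\gC$ is Zariski dense (Borel density, Theorem~\ref{thm:BDT}) its centralizer is central, so one may arrange $g_n\to 1$. Consequently $g_n\gC g_n^{-1}=\rho_n(\gC)\subseteq\gC_n$, because the $\gamma_i^{(n)}$ generate $\rho_n(\gC)$ and lie in $\gC_n$.

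The hard part will be the reverse inclusion, i.e.\ ruling out that $\gC_n$ is strictly larger than the conjugate $g_n\gC g_n^{-1}$. For this I would combine the uniform discreteness from the first step with Chabauty point-matching on compact balls: fix a compact ball $B$; for large $n$ the elements of $\gC_n\cap B$ are $U_N$-separated and each lies near $\gC\cap B$, so the finite sets $\gC_n\cap B$, $g_n\gC g_n^{-1}\cap B$ and $\gC\cap B$ are in bijection via nearest point. Since $g_n\gC g_n^{-1}\cap B\subseteq\gC_n\cap B$ and the two finite sets have equal cardinality, they coincide on $B$; letting $B$ exhaust $G$ gives $\gC_n=g_n\gC g_n^{-1}$. (Alternatively, $g_n\gC g_n^{-1}$ is a lattice contained in the discrete group $\gC_n$, hence of finite index $d_n$, and lower semicontinuity of $\covol$ under Chabauty convergence forces $d_n=1$.) Either way $\gC_n$ is conjugate to $\gC$, so the conjugacy class is open. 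The two genuine subtleties to watch are the finite presentability of $\gC$, needed to clear all relations for one common $n$, and this no-extra-elements step; everything else is soft.
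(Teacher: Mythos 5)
Your argument coincides with the paper's own proof up to and including the application of local rigidity: the paper likewise fixes a finite presentation of $\gC$, uses a no-small-subgroups neighbourhood (the $V^2\subset U$ trick, which is exactly the content of Lemma~\ref{K_n-open}) to force every relator evaluated on approximating elements of the nearby subgroup to equal $1$, obtains a homomorphism from $\gC$ into the nearby subgroup that is close to the inclusion, and invokes local rigidity to conclude that the nearby subgroup contains a conjugate $g\gC g^{-1}$. The divergence is entirely in the endgame, i.e.\ in ruling out that $\gC_n$ is strictly bigger than $g_n\gC g_n^{-1}$: the paper disposes of this by asserting that there are only finitely many subgroups of $G$ containing $\gC$ and meeting $U$ trivially, whereas you propose nearest-point matching on balls, or alternatively a covolume count.

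There is a genuine gap in your primary route, namely the claim that ``one may arrange $g_n\to 1$.'' First, the justification offered does not apply: the proposition concerns an arbitrary connected Lie group, where Theorem~\ref{thm:BDT} (stated for noncompact simple $G$) is unavailable and a lattice need not be Zariski dense. Second, and more importantly, even if the centralizer of $\gC$ were central this would only make $g_n$ unique modulo the center; it says nothing about being able to choose $g_n$ small. What is actually needed is that the orbit map $g\mapsto \Ad(g)\circ\iota$ is \emph{open} onto the conjugation orbit of $\iota$ in $\Hom(\gC,G)$. This is true here --- local rigidity makes that orbit open in $\Hom(\gC,G)$, hence locally compact, so Effros' open-mapping theorem (or a Baire category argument) applies --- but it is a real step, and Zariski density is not the tool that provides it; without it the $g_n$ could a priori escape to infinity and the ball-matching collapses. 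Fortunately, your parenthetical alternative bypasses the issue entirely and should be promoted to the main argument: since $G$ is unimodular (it contains a lattice), $g_n\gC g_n^{-1}\le \gC_n$ has index $d_n=\covol(\gC)/\covol(\gC_n)$, and lower semicontinuity of covolume --- which in turn follows from the uniform discreteness $\gC_n\cap U_N=\{1\}$ you already established, by pushing an almost-fundamental domain of $\gC$ into $G/\gC_n$ --- forces $d_n=1$ for large $n$. That version is sound, and is arguably more self-contained than the paper's own finiteness-of-overgroups assertion, which the paper also leaves unproved.
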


\begin{proof}
Let $\gC\le G$ a locally rigid lattice.
Let $U$ be a compact identity neighborhood in $G$ satisfying:
\begin{itemize}
\item $U\cap\gC=\{1\}$,
\item $U$ contains no nontrivial groups, 
\end{itemize}
and let $V$ be an open symmetric identity neighborhood with $V^2\subset U$. By the choice of $V$ we for a subgroup $H\le G$, that $H\cap U\ne\{1\}$ iff $H$ meets the compact set $U\setminus V$.  

Recall that $\gC$, being a lattice in a Lie group, is finitely presented and let $\langle \gS|R\rangle$ be a finite presentation of $\gC$. Denote $S=\{s_1,\ldots,s_k\}$.
We can pick a sufficiently small identity neighborhood $\gO$ so that for every choice of $g_i\in s_i\gO,~i=1,\ldots,k$ and every $w\in R$ we have
$w(g_1,\ldots,g_n)\in U$. 

Now if $H\in\sub(G)$ is sufficiently close to $\gC$ in the Chabauty topology then $H\cap s_i\gO\ne\emptyset,~i=1,\ldots,k$ and $H\cap (U\setminus V)=\emptyset$, i.e. $H\cap U=\{1\}$. Picking $h_i\in H\cap s_i\gO,~i=1,\ldots,k$ one sees that the assignment $s_i\mapsto h_i$ induces a homomorphism from $\gC$ into $H$. Since $\gC$ is locally rigid it follows that if $H$ is sufficiently close to $\gC$ then it contains a conjugate of $\gC$. 
%The assumption that $H$ meets $U$ trivially guaranties that $H$ is discrete and its co-volume is bounded below by $\vol(V)$. 
However there are only finitely many subgroups containing $\gC$ and intersecting $U$ trivially, hence if $H$ is sufficiently close to $\gC$ then it is a conjugate of $\gC$.
\end{proof} 

Denote by $\text{EIRS}(G)$ the space ergodic IRSs of $G$, i.e. the set of extreme points of $\irs(G)$.

\begin{cor}
Let $G$ be a connected Lie group and $\gC\le G$ a locally rigid lattice. Then the IRS $\mu_\gL$ is isolated in $\text{EIRS}(G)$.
\end{cor}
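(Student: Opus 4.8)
The plan is to use the Chabauty openness of the conjugacy class from Proposition \ref{thm:CLR} to separate $\mu_\gC$ from every other ergodic IRS by a single continuous test function. Write $C=\{g\gC g^{-1}:g\in G\}\subset\sub(G)$ for the conjugacy class of $\gC$. By construction $\mu_\gC$ is the pushforward of the invariant probability measure of $G/\gC$ under $g\gC\mapsto g\gC g^{-1}$, so $\mu_\gC$ is a $G$-invariant probability measure supported on the single orbit $C$; it is ergodic, being the image of the transitive (hence ergodic) action on $G/\gC$, and in particular $\mu_\gC\in\text{EIRS}(G)$ with $\mu_\gC(C)=1$. Since $\sub(G)$ is compact metrizable, $\mu_\gC$ is inner regular, so first I would choose a compact $K\subset C$ with $\mu_\gC(K)>1/2$, and then, using that $C$ is open by Proposition \ref{thm:CLR}, apply Urysohn's lemma to produce a continuous $f\colon\sub(G)\to[0,1]$ with $f\equiv 1$ on $K$ and $\text{supp}(f)\subset C$, so that $f\le\mathbf{1}_C$ and $\int f\,d\mu_\gC\ge\mu_\gC(K)>1/2$.

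Next I would consider the $w^*$-open set $\mathcal{O}=\{\nu\in\irs(G):\int f\,d\nu>1/2\}$, which contains $\mu_\gC$, and show that $\mathcal{O}\cap\text{EIRS}(G)=\{\mu_\gC\}$. Let $\nu\in\mathcal{O}$ be ergodic. Since $f\le\mathbf{1}_C$ we get $\nu(C)\ge\int f\,d\nu>1/2$; but $C$ is a $G$-invariant Borel set (it is conjugation-invariant and open), so ergodicity forces $\nu(C)\in\{0,1\}$, whence $\nu(C)=1$. Thus every ergodic IRS in $\mathcal{O}$ is supported on the single orbit $C$.

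It then remains to see that $C$ carries a unique $G$-invariant probability measure, which must be $\mu_\gC$. Here I would use that $C$, being open, is locally closed, so by Effros' theorem the orbit map $G/\norma_G(\gC)\to C,\ g\norma_G(\gC)\mapsto g\gC g^{-1}$ is a $G$-equivariant homeomorphism, intertwining conjugation on $C$ with left translation on $G/\norma_G(\gC)$. Since $\norma_G(\gC)\supseteq\gC$ is a closed subgroup of finite covolume (it sits between the lattice $\gC$ and $G$, so the finite invariant measure on $G/\gC$ pushes forward to a finite invariant measure on $G/\norma_G(\gC)$), the homogeneous space $G/\norma_G(\gC)$ admits a unique $G$-invariant probability measure; transporting it through the homeomorphism identifies the unique invariant probability measure on $C$ as $\mu_\gC$. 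Therefore $\nu=\mu_\gC$, and $\mu_\gC$ is isolated in $\text{EIRS}(G)$.

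I expect the main obstacle to be the last step: passing from ``ergodic and supported on the orbit $C$'' to ``equal to $\mu_\gC$'' requires that the orbit be regular enough for the orbit map to be a topological (hence Borel) isomorphism onto $C$, together with uniqueness of the invariant probability measure on $G/\norma_G(\gC)$. The openness of $C$ supplied by Proposition \ref{thm:CLR} is exactly what makes Effros' theorem applicable and rules out the potential pathology, while the construction of $f$ and the ergodicity dichotomy are routine once $C$ is known to be open and $\mu_\gC$-conull.
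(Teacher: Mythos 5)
Your proof is correct and follows essentially the same route as the paper's: openness of the conjugacy class $C$ of $\gC$ (Proposition \ref{thm:CLR}) forces any IRS weak-$*$ close to $\mu_\gC$ to give $C$ positive mass, ergodicity upgrades this to full mass, and uniqueness of the $G$-invariant probability measure on the orbit then identifies the measure as $\mu_\gC$. The paper's two-line proof leaves the Portmanteau-type step and the final identification implicit; your Urysohn test function and the Effros/homogeneous-space argument simply make those steps explicit.
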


\begin{proof}
Let $\gC$ be as above.
If $\mu$ is an IRS of $G$ sufficiently close to $\mu_\gC$ then with positive $\mu$-probability a random subgroup is a conjugate of $\gC$. Thus if $\mu$ is ergodic then it must be $\mu_\gC$.
\end{proof}

%%%%%%%%%%%%%%%

\subsection{Farber property}

\begin{defn}
A sequence $\mu_n$ of invariant random subgroups of $G$ is called {\it Farber}\footnote{Various authors use various variants of this notion.}
 if $\mu_n$ converge to the trivial IRS, $\gd_{\{1\}}$.
\end{defn}

One of the key results of \cite{7S} is the following theorem:

\begin{thm}\label{thm:7main}
Let $G$ be a simple Lie group of real rank at least $2$. Let $\gC_n$ be a sequence of pairwise non-conjugate lattices, then $\mu_{\gC_n}$ is Farber.
\end{thm}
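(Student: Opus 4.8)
The plan is to argue by contradiction, extracting a non-trivial limit and then dissecting it with the Borel Density and Stuck--Zimmer theorems. Suppose $\mu_{\gC_n}$ does not converge to $\gd_{\{1\}}$. Since $\irs(G)$ is compact we may pass to a subsequence along which $\mu_{\gC_n}$ converges to some $\mu\ne\gd_{\{1\}}$. Each $\gC_n$ is a discrete, hence proper, closed subgroup, so $\mu_{\gC_n}\in\text{PIRS}(G)$; as $\text{PIRS}(G)$ is compact (Lemma~\ref{lem:PSUB}), and therefore closed in $\irs(G)$, the limit $\mu$ also lies in $\text{PIRS}(G)=\text{DIRS}(G)$, the equality being Theorem~\ref{thm:BDT}. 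In particular $\mu$ is a discrete IRS and, again by Theorem~\ref{thm:BDT}, $\mu$-almost every subgroup is either trivial or Zariski dense.

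Next I would pass to the ergodic decomposition $\mu=\int \eta\, d\mathbb P(\eta)$, where $\mathbb P$ is a probability measure on $\text{EIRS}(G)$. This is the step where the rank hypothesis is indispensable and where I expect the real difficulty to lie: because $G$ is simple of real rank at least $2$, the Stuck--Zimmer theorem classifies the ergodic p.m.p.\ actions of $G$, and hence its ergodic IRSs. Together with the Zariski-density alternative of Theorem~\ref{thm:BDT} (which rules out central point masses), this forces every non-trivial ergodic component $\eta$ to be of the form $\mu_\gL$ for an honest lattice $\gL\le G$. Since $\mu\ne\gd_{\{1\}}$, the measure $\mathbb P$ must assign positive total weight to these lattice components.

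I would then isolate a single offending lattice. In rank at least $2$ every lattice is locally rigid (Weil--Margulis rigidity), so by Proposition~\ref{thm:CLR} each conjugacy class $\mathcal C_\gL$ is Chabauty open. Distinct classes are disjoint, and since $\sub(G)$ is separable there can be at most countably many pairwise disjoint non-empty open sets; thus the lattice components of $\mu$ range over a countable collection. As their total $\mathbb P$-weight is positive, some particular class carries weight $w:=\mathbb P(\{\mu_\gL\})>0$, and since $\mu_\gL$ is supported on $\mathcal C_\gL$ we obtain $\mu(\mathcal C_\gL)\ge w>0$.

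Finally I would run the portmanteau inequality against this open set. Weak-$*$ convergence gives
$$
\liminf_{n}\mu_{\gC_n}(\mathcal C_\gL)\ \ge\ \mu(\mathcal C_\gL)\ \ge\ w\ >\ 0,
$$
so $\mu_{\gC_n}(\mathcal C_\gL)>0$ for infinitely many $n$. But $\mu_{\gC_n}$ is supported on the single conjugacy class $\mathcal C_{\gC_n}$, so $\mu_{\gC_n}(\mathcal C_\gL)>0$ forces $\mathcal C_{\gC_n}=\mathcal C_\gL$, i.e.\ $\gC_n$ is conjugate to $\gL$ — for infinitely many $n$, contradicting pairwise non-conjugacy. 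Hence $\mu=\gd_{\{1\}}$ and the sequence is Farber. Everything downstream of the second paragraph is soft (compactness, portmanteau, and the openness of conjugacy classes), so the crux is really the Stuck--Zimmer classification of ergodic components together with the local rigidity used to make conjugacy classes open; it is precisely here that the real-rank-$\ge 2$ assumption does all the work, and both inputs fail in rank one (e.g.\ Teichm\"uller deformations of surface-group lattices), consistent with the statement being false there.
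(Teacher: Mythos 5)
Your proof is correct, and it takes a genuinely different route from the one in the paper. The paper's argument runs entirely inside the space of ergodic IRSs: since $\mathrm{rank}(G)\ge 2$, the group $G$ has property (T), so by the Glasner--Weiss theorem $\text{EIRS}(G)$ is compact; by the Stuck--Zimmer classification its points are exactly $\gd_G$, $\gd_{\{1\}}$ and the $\mu_\gL$ for lattices $\gL$; and since $\gd_G$ and each $\mu_\gL$ are isolated points (the latter by the corollary to Proposition \ref{thm:CLR}), $\gd_{\{1\}}$ is the unique accumulation point of $\text{EIRS}(G)$, whence the pairwise distinct ergodic IRSs $\mu_{\gC_n}$ must converge to it. You share the two deep inputs---the Stuck--Zimmer classification and local rigidity of higher-rank lattices (fed through Proposition \ref{thm:CLR} to make conjugacy classes Chabauty open)---but you replace the Glasner--Weiss theorem, which is precisely where the paper invokes property (T) directly, by an ergodic decomposition of a subsequential limit, a countability argument (pairwise disjoint open conjugacy classes in a separable space), and the portmanteau inequality. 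The paper's route is shorter and proves a stronger topological fact about $\text{EIRS}(G)$ itself, with no need to extract or decompose any limit; your route never needs to know that a weak-$*$ limit of ergodic IRSs is ergodic (i.e., that $\text{EIRS}(G)$ is closed in $\irs(G)$), so property (T) enters only through the Stuck--Zimmer black box and everything downstream is soft measure theory, which makes the argument potentially portable to settings where Glasner--Weiss compactness is unavailable. A further merit of your write-up: using the Zariski-density alternative of Theorem \ref{thm:BDT} to exclude central point masses among the ergodic components handles a case that the paper's formulation of the Stuck--Zimmer variant quietly ignores when $G$ has non-trivial center.
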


The proof relies on the following variant of Stuck--Zimmer theorem (see \cite{7S}):

\begin{thm}
Assuming $\text{rank}(G)\ge 2$, a proper non-trivial ergodic IRS of $G$ is $\mu_\gC$ for some lattice $\gC$, i.e.
$$
 \text{EIRS}(G)=\{\gd_G,\gd_{\{1\}}, \mu_\gC,~\gC~\text{a lattice in}~G\}.
$$
\end{thm}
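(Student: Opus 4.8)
The plan is to combine the Borel Density Theorem for IRS (Theorem~\ref{thm:BDT}) with the measure-theoretic rigidity of higher rank groups, i.e.\ the Stuck--Zimmer theorem, through the dictionary between invariant random subgroups and probability measure preserving actions. First I would dispose of the easy inclusion ``$\supseteq$''. The measures $\gd_G$ and $\gd_{\{1\}}$ are Dirac masses supported at the $G$-fixed points $G,\{1\}\in\sub(G)$, hence trivially extreme in $\irs(G)$. For a lattice $\gC\le G$, the action $G\act(G/\gC,\mathrm{Haar})$ is transitive and therefore ergodic, and the stabiliser map $g\gC\mapsto g\gC g\inv$ is a $G$-equivariant factor carrying the Haar measure onto $\mu_\gC$; since the push-forward of an ergodic measure under an equivariant map is ergodic, $\mu_\gC\in\text{EIRS}(G)$.

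For the reverse inclusion, let $\mu\in\text{EIRS}(G)$ with $\mu\notin\{\gd_G,\gd_{\{1\}}\}$; I must produce a lattice $\gC$ with $\mu=\mu_\gC$. Since $G$ is an isolated point of $\sub(G)$, the singleton $\{G\}$ is a clopen $G$-invariant subset, so ergodicity forces $\mu(\{G\})\in\{0,1\}$; as $\mu\ne\gd_G$ we get $\mu(\{G\})=0$, i.e.\ $\mu$ is a proper IRS. By the Borel Density Theorem for IRS (Theorem~\ref{thm:BDT}) $\mu$ is then discrete, and $\mu$-almost every subgroup is trivial or Zariski dense. The trivial subgroups form a $G$-invariant measurable set, so by ergodicity its measure is $0$ or $1$; as $\mu\ne\gd_{\{1\}}$ it is $0$, and we conclude that a $\mu$-random subgroup is almost surely discrete and Zariski dense.

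Next I would pass to pmp actions. By the realisation theorem \cite[Theorem~2.6]{7S} there is a pmp action $G\act(X,m)$ whose stabiliser IRS is $\mu$; decomposing into ergodic components and using that $\mu$ is an extreme point of $\irs(G)$, one may arrange the action itself to be ergodic. Now I would invoke the Stuck--Zimmer dichotomy: since $G$ is simple of real rank at least $2$ it has property (T) and higher rank, so every ergodic pmp $G$-action is either essentially transitive or essentially free. If it were essentially free the stabilisers would be trivial almost surely, giving $\mu=\gd_{\{1\}}$, contrary to assumption. Hence the action is essentially transitive, $(X,m)\cong(G/\gC,\mathrm{Haar})$ for a closed subgroup $\gC\le G$; finiteness of $m$ together with the discreteness of stabilisers established above shows that $\gC$ is a lattice. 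The stabiliser of $g\gC$ is $g\gC g\inv$, so the stabiliser IRS is exactly $\mu_\gC$, whence $\mu=\mu_\gC$.

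The serious input, and the only genuinely hard step, is the Stuck--Zimmer dichotomy itself. Its proof uses the full strength of higher rank: one studies the induced action on $X\times G/P$ with $P$ a minimal parabolic, exploits amenability of $P$ together with the Nevo--Zimmer intermediate-factor structure theorem and property (T), and runs a normal-subgroup-theorem style argument to show that a generic stabiliser is too rigid to lie strictly between triviality and finite covolume. It is precisely here that $\rank(G)\ge 2$ is indispensable: in rank one the statement is false, since lattices admit many infinite-index Zariski-dense subgroups giving proper ergodic IRS that are not of the form $\mu_\gC$. A minor technical point worth isolating is the passage from an ergodic IRS to an \emph{ergodic} pmp realisation, which must be verified so that the application of Stuck--Zimmer is legitimate.
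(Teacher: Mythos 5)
Your proposal is correct and takes essentially the same route as the paper: the paper presents this statement as a variant of the Stuck--Zimmer theorem with a citation to \cite{7S}, and the derivation it describes in \S\ref{sec:SZ} is exactly yours --- realize the ergodic IRS as the stabilizer IRS of an ergodic p.m.p.\ action and apply the Stuck--Zimmer transitive-or-free dichotomy, with the Borel density theorem supplying discreteness of the stabilizers. Like the paper, you correctly isolate the Stuck--Zimmer theorem itself (property (T) together with the Nevo--Zimmer intermediate factor theorem) as the one genuinely hard input, which both you and the paper treat as a black box.
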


\begin{proof}[Proof of Theorem \ref{thm:7main}]
Since $G$ is of rank $\ge 2$ it has Kazhdan's property $(T)$, \cite{Ka}. By \cite{GW}, $\text{EIRS}(G)$ is compact. Since $\gd_G$ and $\gd_{\mu_\gC}$ where $\gC$ is a lattice in $G$ are isolated points, it follows that $\gd_{\{1\}}$ is the unique accumulation point of $\text{EIRS}(G)$. In particular $\mu_{\gC_n}\to\gd_{\{1\}}$.
\end{proof}

%One can reformulate the results above as follows:
%
%\begin{thm}
%The space $\text{EIRS}(G)$ is compact, consisting of the isolated points 
%$$
% \{\gd_G, \mu_\gC,~\gC~\text{a lattice in}~G\}
% $$ 
% and a unique accumulation point $\gd_{\{1\}}$.
%\end{thm}
%

%%%%%%%%%%%%%%%%%%%%%

\subsection{Semisimple analytic groups}\label{par:general-versions}

Above we have carried out the arguments under the assumption that $G$ is a simple Lie group. In this section we state the results in the more general setup of analytic groups over local fields.

\begin{definition}
\label{def:semisimple analytic group}
Let $k$ be a  local field and $\BG$ a connected  $k$-isotropic $k$-simple  linear $k$-algebraic group.
\begin{itemize}
\item A \emph{simple analytic group} is a group of the form $\BG(k)$.
\item A \emph{semisimple analytic group} is an almost direct product of finitely many simple analytic groups, possibly over different local fields.
\end{itemize}
\end{definition}

Note that if $k$ is a local field and $\BG$ is a connected semisimple linear $k$-algebraic group without $k$-anisotropic factors then $\BG(k)$ is a semisimple analytic group. Such a group is indeed \emph{analytic} in the sense of e.g. \cite{sere}.
%
%A semisimple analytic group is respectively \emph{non-Archimedean/has zero characteristic/simply connected} if all of its simple analytic almost direct factors are non-Archimedean/defined over a local field of zero characteristic/are simply connected.
%
Associated to a  semisimple analytic group $G$ are its universal covering group $\widetilde{G}$ and adjoint group $\overline{G}$. There are central $k$-isogenies $\widetilde{G} \xrightarrow{\widetilde{p}} G \xrightarrow{\overline{p}} \overline{G} $ and this data is unique up to a $k$-isomorphism \cite[I.4.11]{Ma}. 
%
%
%A semisimple analytic group is \emph{happy} if its non--Archimedean factor contains a topologically finitely generated compact open subgroup.  This notion  is motivated by \cite{barnea} and discussed in \S\ref{sub:G+ and happy} below. All semisimple analytic groups zero are happy in zero characteristic, and in positive characteristic all simply-connected semisimple analytic group are happy.
%
For a semisimple analytic group $G$, denote by $G^+$ the subgroup  of $G$ generated by its unipotent elements \cite[I.1.5,I.2.3]{Ma}.
If $G$ is simply connected then $G = G^+$. If $G$ is Archimedean then $G^+$ is the connected component $G_0$ at the identity. In general $G/G^+$ is a compact abelian group. The group $G^+$ admits no proper finite index subgroups.

\begin{definition}
\label{def:happy semisimple analytic group}
A simple analytic group $G$ is \emph{happy} if $\mathrm{char}(k)$ does not the divide $|Z|$ where $Z$ is the kernel of the map $\widetilde{G} \to G$. A semisimple analytic group is \emph{happy} if all of its almost   direct factors are.
\end{definition}

Note that a simply connected or a zero characteristic semisimple analytic group is automatically happy.
From the work of Barnea and Larsen \cite{BL} one obtains that a semisimple analytic group $G$ is happy, iff $G/G^+$ is a finite abelian group, iff some (equivalently every) compact open subgroup in the  non-Archimedean factor of $G$ is  finitely generated.

%For happy groups the subgroup $G^+$ is particularly nicely behaved.
%\blue{
%\begin{thm}[Barnea--Larsen \cite{BL}]
%	\label{thm:properties of happy groups}
%	The following are equivalent for a semisimple analytic group $G$. 
%	\begin{enumerate}
%\item \label{item:G happy} $G$ is happy,
%\item \label{item:map is open} The central $k$-isogeny $\widetilde{p} : \widetilde{G}\to G$ is separable, or equivalently is an open map in the Hausdorff topology,
%\item \label{item:quotient is finite} $G/G^+$ is a finite abelian group,
%\item \label{item:finitely generated compact open} Some (equivalently every) compact open subgroup in the  non-Archimedean factor of $G$ is  finitely generated.
%\end{enumerate}
%\end{thm}
%}
%The work of Barnea and Larsen focuses on property (\ref{item:finitely generated compact open}). Some of the equivalences between (\ref{item:G happy}), (\ref{item:map is open}) and (\ref{item:quotient is finite}) are  discussed already e.g. in  \cite{borel1973homomorphismes}.

\paragraph{Self Chabauty isolation }
\label{sec:isolated groups}
\begin{definition}
\label{def:isolated}
A l.c.s.c. group $G$ is \it{self-Chabauty-isolated} if the point $G$ is  isolated in $\sub(G)$ with the Chabauty topology.
\end{definition}

Note that $G$ is self-Chabauty-isolated if and only if there is a finite collection  of open subsets $U_1,\ldots,U_n \subset G$ so that the only closed subgroup intersecting  every $U_i$ non-trivially is $G$ itself. The following result is proved in \cite[\S 6]{non-archimedean}.

\begin{theorem}
\label{thm:G is isolated}
%Let $\GG$ be a connected simply connected semisimple linear $k$-group. Denote $G = \GG(k)$. Then 
Let $G$ be a happy semisimple analytic group. Then $G^+$ is self-Chabauty-isolated.
\end{theorem}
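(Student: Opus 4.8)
The plan is to verify the finite-open-sets criterion stated just above the theorem: I will exhibit open sets $U_0,U_1,\dots,U_m\subseteq G^+$ such that the only closed subgroup of $G^+$ meeting all of them is $G^+$ itself. First I reduce to a single simple factor. Since $G$ is a happy semisimple analytic group, each almost-direct factor $G_i=\BG_i(k_i)$ is happy, $G^+$ is the almost-direct product of the $G_i^+$, and a short argument shows that self-Chabauty-isolation passes from the $G_i^+$ to their product: the projections $H\to G_i^+$ are forced to be onto, the remaining diagonal subgroups are excluded by choosing the open sets in distinct factors incompatibly, and the finite central kernel together with finiteness of $G/G^+$ (happiness) handles the almost-direct overlap. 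For an Archimedean factor $G^+$ is the connected semisimple Lie group $G_0$, which is isolated in $\sub(G_0)$ by the classical results of Toyama and Kuranishi \cite{To,Ku}. So I may assume $G=\BG(k)$ with $k$ non-Archimedean.

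The mechanism is the following dichotomy, which I would isolate as the key lemma: \emph{a closed subgroup $H\le G^+$ that is open and contains an unbounded element equals $G^+$}. Fix a maximal $k$-split torus $S$ with relative root system $\Phi$ and root groups $U_\alpha$, and recall that $G^+=\langle U_\alpha:\alpha\in\Phi\rangle$. If $H$ is open it contains an identity neighbourhood, hence a full ball $B_\alpha$ around $1$ inside each $U_\alpha(k)\cong(k,+)$. After conjugating $H$ by an element close to $1$ — which changes neither openness nor the conclusion, as $G^+$ is normal — one may assume the unbounded element is a regular $a\in S$, so $|\alpha(a)|\neq1$ for every $\alpha$. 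Conjugation by $a^{\pm n}$ dilates $B_\alpha$: the sets $a^nB_\alpha a^{-n}$ (using $a$ for the positive roots and $a^{-1}$ for the negative ones) are balls in $U_\alpha(k)$ whose radii tend to infinity, so their union is all of $U_\alpha(k)$, and since $H$ is closed we get $U_\alpha(k)\subseteq H$ for every $\alpha$. Hence $H\supseteq\langle U_\alpha\rangle=G^+$, so $H=G^+$. This is exactly the step that produces genuine (rather than merely approximate) root-group elements over the full field $k$ — which is what the dilation dynamics buys us, and what rules out the proper subfield subgroups $\BG(k')$, $k'\subsetneq k$, that obstruct any argument based on Zariski density and unboundedness alone.

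It remains to force a closed $H$ to be open and unbounded using only finitely many Chabauty conditions. For unboundedness I take $U_0$ to be a small neighbourhood of a regular unbounded element $a_0\in S\cap G^+$ (for instance a coroot value $h_\alpha(t)$ with $|t|\neq1$); any $H$ meeting $U_0$ then contains a regular unbounded element. To force openness I invoke the \emph{happy} hypothesis in the Barnea--Larsen form \cite{BL}: a compact open subgroup $K\le G^+$ (e.g.\ a parahoric from Bruhat--Tits theory) is topologically finitely generated, say by $x_1,\dots,x_m$, and I take $U_i$ to be a neighbourhood of $x_i$. The crux, and the step I expect to be the main obstacle, is the \emph{robustness lemma}: if the $h_i\in U_i$ are close enough to the $x_i$, then $\overline{\langle h_1,\dots,h_m\rangle}$ is again a compact open subgroup of $G^+$ — that is, topologically generating an open subgroup is an open condition on the generating tuple. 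This is precisely where finiteness of the generating set is indispensable: for a non-happy group the compact open subgroups are not finitely generated, no finite collection of $U_i$ can detect openness, and the isolation genuinely fails. I would prove the robustness lemma by a Frattini-type argument — a finitely generated profinite (or compact analytic) group has an open Frattini subgroup, so whether a tuple generates is detected in a fixed finite quotient, an open constraint. Granting this, any $H$ meeting $U_1,\dots,U_m$ is open and any $H$ meeting $U_0$ is unbounded, so by the key lemma $H=G^+$, completing the verification of the criterion.
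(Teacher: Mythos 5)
Your overall architecture is sound, and it is essentially the strategy of the proof that the paper itself defers to (\cite[\S 6]{non-archimedean}): one Chabauty condition forcing unboundedness, finitely many conditions forcing openness via topological finite generation of a compact open subgroup (exactly where happiness and Barnea--Larsen \cite{BL} enter), and then the theorem of Tits (with Prasad's elementary proof) that an unbounded open subgroup of $G^+$ equals $G^+$. You have also correctly diagnosed that Zariski density plus unboundedness cannot suffice and that openness is the whole game. The genuine gap sits precisely at the step you flag as the crux: your proof of the ``robustness lemma'' rests on the assertion that a topologically finitely generated profinite group has an open Frattini subgroup, and that assertion is false. The group $\prod_p \BZ/p\BZ$ (product over all primes) is topologically generated by a single element, yet its maximal open subgroups are exactly the kernels of the coordinate projections, so its Frattini subgroup is trivial and not open; products of pairwise distinct finite simple groups give $2$-generated examples with maximal open subgroups of unbounded index. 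So ``generation is detected in one fixed finite quotient'' is not a property of finitely generated profinite groups as such, and with it the proof of the only hard step collapses. Your parenthetical fallback ``or compact analytic'' does not repair this: in positive characteristic --- the only case where happiness has content --- a compact open subgroup such as $\SL_2(\BF_q[[t]])$ is $k$-analytic but of infinite rank as a profinite group, so the finite-rank (p-adic analytic) Frattini theory you are implicitly invoking does not apply to it.

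What rescues the lemma, and what a complete proof must supply, is the joint use of finite generation \emph{and} the congruence (virtually pro-$p$) structure of $U$. Since the congruence subgroups $U_n$ are a base of neighbourhoods of $1$ and $U_1$ is pro-$p$, every finite continuous quotient of $U$ is (normal $p$-subgroup)-by-$F$ with $F$ a quotient of the fixed finite group $U/U_1$. Now bound the index of an arbitrary maximal open subgroup $M \le U$: the quotient $Q$ of $U$ by the core of $M$ is a primitive finite group; either its maximal normal $p$-subgroup is trivial, in which case $|Q| \le |F|$, or $Q$ is of affine type $V \rtimes L$ with $V$ elementary abelian, $L$ acting faithfully and irreducibly on $V$, and then $L$ is forced to be a quotient of $F$ and $\dim_{\BF_p} V \le |L| \le |F|$; in either case $[U:M]$ is bounded by a constant depending only on $|F|$ and $p$. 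Finite generation then gives finitely many subgroups of each bounded index, hence finitely many maximal open subgroups, hence an open Frattini subgroup $\Phi(U)$ --- and with $V_i = x_i\Phi(U)$ your argument runs: any closed subgroup meeting every $V_i$ contains a tuple generating $U$ modulo $\Phi(U)$, hence contains $U$, hence is open. This is also exactly where the statement genuinely fails for non-happy groups, since without finite generation the (possibly infinitely many) maximal open subgroups of bounded index need not have open intersection. Separately, a smaller but real flaw: in your key lemma, conjugating $H$ by an element close to $1$ cannot make an arbitrary unbounded element regular or split. Instead choose $a_0 \in S$ split regular with $|\alpha(a_0)| \ne 1$ for every root and with pairwise distinct eigenvalue absolute values; by Krasner/Hensel these properties hold on a whole neighbourhood $U_0$, so any $h \in H \cap U_0$ is split regular and can be conjugated into $S$, after which your dilation argument (which is Prasad's) is correct.
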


As an immediate consequence we deduce the analog of Lemma \ref{lem:PSUB}, namely that the space $\text{P}\sub(G^+)$ is compact for every $G$ as in Theorem \ref{thm:G is isolated}.

\paragraph{Borel Density.}

The following generalization of Theorem \ref{thm:BDT} was obtained in \cite[Theorem 1.9]{non-archimedean}:

\begin{theorem}[Borel density theorem for IRS]
\label{thm:borel density for IRS}
%Let $k$ be a non-Archimedean local field. Let $\GG$ be a connected, simply connected, semisimple linear $k$-group without $k$-anisotropic factors. Denote $ G = \GG(k)$.  
%Let $G$ be a semisimple analytic group.
Let $k$ be a local field and $G$ a happy semisimple analytic group over $k$. Assume that $G$ has no  almost $k$-simple factors of type $B_n,C_n$ or $F_4$ if $\mathrm{char}(k) = 2$ and of type $G_2$ if $\mathrm{char}(k) = 3$. 

%\marginpar{can this assumption on non-standard isogenies be removed?}

Let $\mu$ be an  ergodic invariant random subgroup of $G$. Then there is a pair of  normal subgroups $N,M \lhd G$ so that 
 $$ N \le H \le M,  \quad  \text{$H/N$ is discrete in $G/N$} \quad \text{and} \quad \overline{H}^{\mathrm{Z}} = M $$
 for $\mu$-almost every closed subgroup $H$ in $  G$.  Here $\overline{H}^{\mathrm{Z}}$ is the Zariski closure of $H$. 
\end{theorem}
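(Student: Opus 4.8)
The plan is to reduce the general statement to the two special cases already established in the excerpt, namely the dichotomy on Lie algebras (Zariski closure is either trivial or everything) combined with the weak-uniform-discreteness / Borel density machinery, but now carried out \emph{relative to the normal subgroup structure} of $G$. First I would pass to the Zariski closure: the assignments $H\mapsto \Lie(H)$ and $H\mapsto\Lie(\overline{H}^{\mathrm{Z}})$ are measurable (as recorded in the discussion after Theorem~\ref{thm:BDT}), so $H\mapsto\overline{H}^{\mathrm{Z}}$ pushes $\mu$ to an $\Ad(G)$-invariant measure on the space of $k$-algebraic subgroups of $\BG$. By ergodicity of $\mu$, this pushforward is itself ergodic, so the Zariski closure $M=\overline{H}^{\mathrm{Z}}$ is $\mu$-almost surely constant and $G$-normalized; since $G$ is semisimple with finitely many almost-simple factors, the $G$-normalized Zariski-connected subgroups are products of factors, hence $M$ is $\mu$-a.s.\ equal to a fixed normal subgroup. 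The hypotheses excluding types $B_n,C_n,F_4$ in characteristic $2$ and $G_2$ in characteristic $3$ are exactly what is needed to run Furstenberg's argument in positive characteristic: these are the bad-prime situations where a proper subvariety of the relevant Grassmannian could carry an invariant measure, and ruling them out is what gives the clean dichotomy on each factor.

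Having fixed $M$, the next step is to produce $N$ and prove discreteness of $H/N$ in $G/N$. Here I would factor through $M$ itself: restricting attention to the a.s.\ event $\{\overline{H}^{\mathrm{Z}}=M\}$, the IRS $\mu$ becomes a Zariski-dense IRS of the semisimple analytic group $M$ (or rather of $M^+$, after checking happiness is inherited by the relevant factors). Now I want to apply the Borel density statement in its discreteness form: a Zariski-dense proper IRS of a happy semisimple analytic group is discrete. The content of Theorem~\ref{thm:BDT} and its generalization is precisely that proper IRSs are discrete, so I would argue that the image of $\mu$ under $H\mapsto H/(H\cap N)$ in $G/N$ is a \emph{proper} IRS of $G/N$, where $N$ is chosen as the largest normal subgroup of $G$ contained $\mu$-a.s.\ in $H$. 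Concretely, set $N$ to be generated by the intersection of $H$ with each almost-simple factor on which that intersection is non-discrete (equivalently Zariski dense in the factor by the dichotomy applied factor-wise); on every such factor the subgroup contains the factor up to the $G^+$-issue, so it is normal, and by ergodicity $N$ is again $\mu$-a.s.\ constant.

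The main obstacle, and where I would spend the bulk of the argument, is the bookkeeping that makes $N$ \emph{normal in $G$} and $\mu$-almost surely constant while simultaneously guaranteeing that $H/N$ is discrete in $G/N$ and $\overline{H}^{\mathrm{Z}}=M$. The difficulty is that $H$ need not respect the factor decomposition of $G$ on the nose — a priori its projection to a product of factors could be a graph of an isomorphism (a diagonal subgroup), so the naive factor-wise dichotomy does not immediately glue. I would handle this by examining, for each almost-simple factor $G_i$ of $M$, the closure of the projection and invoking the dichotomy on the Lie algebra of $H$ inside $\Lie(G_i)$; the projections that are Zariski dense force, via the discreteness half of Borel density applied to the IRS on $M/N$, that $H$ contains the full factor, and the remaining factors contribute the discrete part. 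One subtlety specific to the non-Archimedean and positive-characteristic setting is that $G/G^+$ may be non-trivial, so I must work with $G^+$ and the central isogenies $\widetilde{G}\to G\to\overline{G}$ to transfer the dichotomy correctly, using happiness (via Barnea--Larsen) to ensure finite quotients behave. Once the constancy of the pair $(N,M)$ is secured by ergodicity and the discreteness of $H/N$ follows from the proper-implies-discrete theorem applied on $M/N$, the three asserted conditions hold simultaneously $\mu$-almost surely, completing the proof.
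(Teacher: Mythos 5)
Your opening paragraph follows the strategy the survey itself indicates --- note that the survey does not prove this theorem but quotes it from \cite[Theorem 1.9]{non-archimedean}, sketching the Furstenberg/Lie-algebra argument only in the real simple case --- so the comparison is with that strategy. The second half of your argument, however, contains two errors that break the proof. First, you deduce discreteness of $H/N$ by applying ``a proper IRS is discrete'' to the image IRS on $G/N$ (or $M/N$). That dichotomy is special to \emph{simple} groups and fails for products: in $G_1\times G_2$ the IRS $\delta_{G_1}$, or $\mu_{G_1\times\Gamma_2}$ for a lattice $\Gamma_2\le G_2$, is proper and non-discrete. Since $G/N$ is in general again a multi-factor group, the appeal is circular: the whole point of the $N\le H\le M$ formulation is that the simple-group dichotomy is unavailable for semisimple $G$, and the crucial case where no nontrivial normal subgroup is a.s.\ contained in $H$ (so $N=\{1\}$ and one must show $H$ is a.s.\ discrete) is untouched by your argument. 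Second, the claim you use against the diagonal difficulty --- that Zariski-dense (or non-discrete) projections to a factor force $H$ to contain that factor --- is false: an irreducible lattice such as $\SL_2(\BZ[\sqrt{2}])\hookrightarrow\SL_2(\BR)\times\SL_2(\BR)$ has dense, Zariski-dense projections to both factors and contains neither. This is exactly the case $N=\{1\}$, $M=G$ of the theorem, so projections cannot detect $N$, and the diagonal problem you correctly flagged remains unresolved in your proposal.

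The missing idea is to run Furstenberg's argument on $\Lie(H)$ itself, not only on $\Lie(\overline{H}^{\mathrm{Z}})$. The pushforward of $\mu$ under $H\mapsto\Lie(H)$ is an $\Ad(G)$-invariant measure on the Grassmannian of $\Lie(G)$, hence supported on $\Ad(G)$-invariant subspaces; the excluded types in characteristics $2$ and $3$ are precisely what guarantees that these are the ideals $\bigoplus_{i\in S}\Lie(G_i)$, and ergodicity then fixes $S$. This single statement rules out graph (``diagonal'') subgroups, whose Lie algebras are not ideals, identifies the candidate normal subgroup $N$ (the structure theory of open subgroups of simple analytic groups and the $G^+$ formalism then supply $N\le H$), and in characteristic zero yields discreteness of $H/N$, since a closed subgroup with trivial Lie algebra is discrete there. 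Two further corrections: your inference ``the pushforward is ergodic, hence a.s.\ constant'' is invalid --- ergodicity never implies constancy ($\mu_\Gamma$ itself is ergodic and non-constant); constancy of $M$ follows from the Furstenberg fixed-point statement (an invariant measure on the variety of algebraic subgroups must live on \emph{normal} ones) together with the finiteness of the set of normal algebraic subgroups. And in positive characteristic, closed subgroups of analytic groups need not be analytic and ``$\Lie(H)=0$ implies $H$ discrete'' genuinely fails (e.g.\ for subgroups defined over the subfield $k^p$); overcoming this is the main technical content of \cite{non-archimedean} and is not addressed by invoking happiness and Barnea--Larsen, which only controls the quotient $G/G^+$.
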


\paragraph{Weak Uniform Discreteness.}

As shown in \cite[Theorem 2.1]{WUD} the analog of Theorem \ref{thm:wud} holds for general semisimple Lie groups:

\begin{thm}
Let $G$ be a connected center-free semisimple Lie group with no compact factors. Then $\text{DIRS}(G)$ is weakly uniformly discrete. 
\end{thm}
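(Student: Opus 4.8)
The plan is to run the compactness scheme behind Theorem \ref{thm:wud}, but to replace the compactness of $\text{DIRS}(G)$ — which \emph{fails} for semisimple $G$, since a reducible group such as $G_1\times G_2$ admits proper non-discrete IRSs — by an induction on the number of simple factors, fed by the generalized Borel density theorem. Write $G=G_1\times\cdots\times G_r$ as a direct product of connected center-free simple factors without compact part, so that Theorem \ref{thm:wud} applies to each $G_i$ and Theorem \ref{thm:borel density for IRS} applies to $G$. Keeping the notation $U_n,K_n$ of the simple case, so that $K_n=\{\gC:\gC\cap U_n=\{1\}\}$ is open by Lemma \ref{K_n-open}, I would argue by contradiction: if $\text{DIRS}(G)$ were not weakly uniformly discrete there would be $\gep_0>0$, a sequence $\mu_n\in\text{DIRS}(G)$ and a base $U_n\downarrow\{1\}$ with $\mu_n(K_n^{\,c})\ge\gep_0$. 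Since $G$ is Chabauty isolated (as in Theorem \ref{thm:G is isolated}), $\text{P}\sub(G)$ is compact, so after passing to a subsequence $\mu_n\to\mu$ in the compact space $\text{PIRS}(G)$. Because $K_m^{\,c}$ is closed and contains $K_n^{\,c}$ for $n\ge m$, the Portmanteau inequality gives $\mu(K_m^{\,c})\ge\limsup_n\mu_n(K_m^{\,c})\ge\gep_0$ for every $m$, whence $\mu\big(\bigcap_m K_m^{\,c}\big)\ge\gep_0$; that is, the limit IRS $\mu$ charges the \emph{non-discrete} subgroups with mass at least $\gep_0$.

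Next I would pin down \emph{where} this non-discrete mass sits, using Theorem \ref{thm:borel density for IRS}. Decomposing $\mu$ into ergodic components and discarding the discrete ones (which contribute nothing to $\bigcap_m K_m^{\,c}$), a set of components $\nu$ of positive mass is non-discrete; for such a $\nu$ Borel density furnishes normal subgroups $N\le M$ with $N\le H\le M$ and $H/N$ discrete in $G/N$ for $\nu$-a.e.\ $H$, and non-discreteness forces $N\ne\{1\}$. Since $G$ is center-free semisimple, every normal subgroup is a sub-product $\prod_{i\in S}G_i$, so $\nu$-a.e.\ $H$ contains a whole simple factor. By the pigeonhole principle over the finitely many factors there is an index $i$ and a $\gd_0>0$ with $\mu(E_i)\ge\gd_0$, where $E_i=\{H:H\supseteq G_i\}$ is a closed subset of $\sub(G)$.

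The contributions coming from a \emph{proper} sub-product are handled by induction. Writing $G=G_i\times G_i'$ with $G_i'=\prod_{j\ne i}G_j$, the intersection maps $H\mapsto H\cap G_i$ and $H\mapsto H\cap G_i'$ are Borel and equivariant and send a discrete IRS of $G$ to discrete IRSs of $G_i$ and of $G_i'$; applying Theorem \ref{thm:wud} to $G_i$ and the inductive hypothesis to $G_i'$ produces, for any prescribed error, a single product neighborhood that uniformly controls the $\mu_n$-mass of those $\gC$ meeting $U_n$ in a nontrivial element whose $G_i$- or $G_i'$-component is trivial. What remains — and what I expect to be the main obstacle — are the \emph{diagonal} near-identity elements, i.e.\ $\gamma=(x,y)\in\gC\cap U_n$ with both coordinates nontrivial: these are invisible to intersection with either factor, and the projections $H\mapsto\overline{\pi_i(H)}$ that would detect them are not Chabauty continuous, so one cannot simply transport the convergence $\mu_n\to\mu$ to the factor $G_i$.

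To close the argument I would confront the escape of mass onto $E_i$ directly, and this is the technical heart. The mechanism that should rule it out is the weak uniform discreteness of the simple factor $G_i$ supplied by Theorem \ref{thm:wud}: the event $E_i$ is approached only by subgroups carrying arbitrarily many, arbitrarily short elements in the $G_i$-direction, so the pushforwards of $\mu_n$ along $H\mapsto H\cap G_i$ would be forced to accumulate on the totally non-discrete IRS $\gd_{G_i}$, which Theorem \ref{thm:wud} forbids for any sequence of discrete IRSs of $G_i$. The delicate point is that the relevant condition ``$\gC$ has a nontrivial element with small $G_i$-component'' is Chabauty \emph{closed}, so that mass may concentrate onto it in the limit despite being uniformly small along the sequence; overcoming this requires upgrading the soft Portmanteau estimates to a quantitative, semicontinuous count of near-identity elements in the $G_i$-direction that is compatible with the invariance of the $\mu_n$, thereby contradicting $\mu(E_i)\ge\gd_0$. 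Once the factor-escape is excluded, no non-discrete mass can accumulate, and the compactness scheme of Theorem \ref{thm:wud} then yields, for each $\gep>0$, a single identity neighbourhood $U_\gep$ with $\mu(K^{\,c})<\gep$ for every $\mu\in\text{DIRS}(G)$, which is precisely the asserted weak uniform discreteness.
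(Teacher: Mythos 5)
Your setup is sound and matches the first half of the actual argument (the paper itself does not prove this statement but cites \cite[Theorem 2.1]{WUD}, where the proof appears): the contradiction/Portmanteau scheme, the application of Theorem \ref{thm:borel density for IRS} to the limit $\mu$, and the pigeonhole conclusion $\mu(E_i)\ge\delta_0$ with $E_i=\{H: H\supseteq G_i\}$ are all correct. But two things go wrong. First, your opening premise is false: compactness of $\mathrm{DIRS}(G)$ does \emph{not} fail for semisimple $G$. The existence of proper non-discrete IRSs such as $\delta_{G_1}$ only shows $\mathrm{PIRS}(G)\ne \mathrm{DIRS}(G)$, i.e.\ that the simple-group route via Lemma \ref{lem:PSUB} and Theorem \ref{thm:BDT} no longer yields compactness; in fact $\mathrm{DIRS}(G)$ \emph{is} weak-$*$ closed for center-free semisimple $G$, and establishing this is precisely the content of the cited result (after which Theorem \ref{thm:general}, i.e.\ conditions (1) and (2), finishes the proof with no induction on factors). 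Second, and decisively, your self-identified ``technical heart'' is a genuine gap: you correctly observe that $E_i$ is Chabauty closed, so mass can a priori concentrate on it, and that intersections $H\mapsto H\cap G_i$ cannot detect the diagonal near-identity elements --- and then you supply no mechanism to rule this out. The mechanism you gesture at (pushforwards along $H\mapsto H\cap G_i$ accumulating at $\delta_{G_i}$) fails for exactly the reason you yourself give: a discrete $\Gamma$ Chabauty-close to $G_i\times L$ can have $\Gamma\cap G_i=\{1\}$, as irreducible lattices do.

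The missing idea is the Zassenhaus/Kazhdan--Margulis lemma, which is what turns the closed condition $H\supseteq G_i$ into an \emph{open} condition that no discrete subgroup can satisfy (and is why the source \cite{WUD} bears that name). There is an identity neighbourhood $\Omega\subset G$ such that for every discrete $\Gamma\le G$ the group $\langle\Gamma\cap\Omega\rangle$ is contained in a connected nilpotent Lie subgroup, whose nilpotency class is bounded by some $c\le\dim G$. Since $G_i$ is semisimple it satisfies no group law (it contains nonabelian free subgroups), so one can choose $x_1,x_2\in G_i\cap\mathrm{int}(\Omega)$ with $w(x_1,x_2)\ne 1$, where $w$ is the $(c+1)$-fold iterated commutator, and then balls $B_1\ni x_1$, $B_2\ni x_2$ inside $\Omega$ so small that $w(y_1,y_2)\ne 1$ for all $y_1\in B_1$, $y_2\in B_2$. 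The set $\mathcal{O}=\{H\in\sub(G): H\cap B_1\ne\emptyset,\ H\cap B_2\ne\emptyset\}$ is Chabauty open, contains $E_i$, and contains \emph{no} discrete subgroup: any discrete $\Gamma\in\mathcal{O}$ would have $y_1,y_2\in\Gamma\cap\Omega$ generating a group that is nilpotent of class at most $c$, forcing $w(y_1,y_2)=1$, contrary to the choice of the balls. Hence $\mu_n(\mathcal{O})=0$ for every $n$, while Portmanteau on the open set $\mathcal{O}$ gives $\liminf_n\mu_n(\mathcal{O})\ge\mu(\mathcal{O})\ge\mu(E_i)\ge\delta_0>0$ --- the desired contradiction. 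With this one open-set argument added, your limit/Borel-density/pigeonhole steps do prove that $\mathrm{DIRS}(G)$ is closed, and the compactness scheme of Theorem \ref{thm:wud} then yields the theorem; the induction on the number of simple factors can be discarded entirely.
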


Consider now a general locally compact $\gs$-compact group $G$.
Since $\sub_d(G)\subset\sub(G)$ is a measurable subset, by restricting attention to it, one may replace Property NSS
%being the inverse image of the map $\psi$, introduced in the proof of Proposition \ref{prop:BD}, 
by the weaker Property NDSS (no discrete small subgroups), which means that there is an identity neighbourhood which contains no non-trivial discrete subgroups. In that generality, the analog of Lemma \ref{K_n-open} would say that $K_n$ are relatively open in $\sub_d(G)$. Thus, the ingredients required for the argument above are:

%The proof of Theorem \ref{thm:main} relies on the following ingredients:

\begin{enumerate}
\item $\text{DIRS}(G)$ is compact,
\item $G$ has NDSS.
\end{enumerate}

In particular we have:

\begin{thm}\label{thm:general}
Let $G$ be a locally compact $\gs$-compact group which satisfies $(1)$ and $(2)$. Then $\text{DIRS}(G)$ is weakly uniformly discrete.
\end{thm}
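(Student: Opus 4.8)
The plan is to re-run, essentially verbatim, the proof of Theorem~\ref{thm:wud}, replacing Property NSS by Property NDSS throughout and carrying out every step relative to the measurable subset $\sub_d(G)$ of discrete subgroups, on which all the IRSs in $\text{DIRS}(G)$ are by definition concentrated. I would fix a descending base $U_n$, $n\in\BN$, of compact identity neighbourhoods and set $K_n=\{\gC\in\sub(G):\gC\cap U_n=\{1\}\}$ exactly as before. The three inputs of the original argument were: $K_n$ open, their union exhausting $\sub_d(G)$, and compactness of $\text{DIRS}(G)$; I would reproduce each with the appropriate weakening.

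The heart of the matter is the analogue of Lemma~\ref{K_n-open}. By NDSS there is a fixed identity neighbourhood containing no nontrivial discrete subgroup; for each $n$ I would choose an open symmetric identity neighbourhood $V_n$ inside its intersection with $U_n$ satisfying $V_n^2\subset U_n$. The claim is that for a \emph{discrete} $\gC$ one has $\gC\cap U_n=\{1\}$ iff $\gC$ misses the compact collar $U_n\setminus V_n$. Indeed, if $1\ne\gc\in\gC\cap V_n$ then $\langle\gc\rangle$ is a nontrivial discrete subgroup (a subgroup of the discrete group $\gC$), so by the choice of $V_n$ it is not contained in $V_n$; taking the least $k$ with $\gc^k\notin V_n$ gives $\gc^k=\gc^{k-1}\gc\in V_n^2\subset U_n$ while $\gc^k\notin V_n$, so $\gc^k\in\gC\cap(U_n\setminus V_n)$. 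Consequently $K_n\cap\sub_d(G)=O_n\cap\sub_d(G)$, where $O_n:=\{\gC\in\sub(G):\gC\cap(U_n\setminus V_n)=\emptyset\}$ is genuinely open in $\sub(G)$ because $U_n\setminus V_n$ is compact. In particular $K_n$ is relatively open in $\sub_d(G)$, as announced in the discussion preceding the theorem.

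With this in hand the rest follows the template. Since every discrete subgroup meets some $U_n$ trivially, $\bigcup_n(K_n\cap\sub_d(G))=\sub_d(G)$; as the $K_n$ ascend and each $\mu\in\text{DIRS}(G)$ is concentrated on $\sub_d(G)$, continuity of measure gives $\mu(K_n)\uparrow 1$, so $\mu(K_n)>1-\gep$ for some $n$ (the analogue of Claim~\ref{clm}). I would then set $\mathcal K_{n,\gep}:=\{\mu\in\text{DIRS}(G):\mu(O_n)>1-\gep\}$, using the ambient open set $O_n$ rather than $K_n$ itself and noting that $\mu(O_n)=\mu(K_n)$ for every discrete IRS, since $O_n$ and $K_n$ agree off the $\mu$-null complement of $\sub_d(G)$. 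Because $\mu\mapsto\mu(O_n)$ is lower semicontinuous in the $w^*$-topology (being a supremum of the continuous evaluations $\mu\mapsto\int f\,d\mu$ over $f\in C(\sub(G))$ with $0\le f\le 1$ and support inside $O_n$), each $\mathcal K_{n,\gep}$ is open; they ascend and cover $\text{DIRS}(G)$, so assumption~$(1)$ yields $\text{DIRS}(G)=\mathcal K_{m,\gep}$ for some $m=m(\gep)$, which is exactly weak uniform discreteness with $U_\gep:=U_m$.

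The one genuine obstacle—and the reason the two hypotheses are isolated the way they are—is precisely that under NDSS the sets $K_n$ need not be open in the ambient space $\sub(G)$ but only in $\sub_d(G)$, so the lower-semicontinuity step that made $\mathcal K_{n,\gep}$ open is not directly available. The device above circumvents this by transferring every measure-theoretic statement to the honestly open set $O_n$, which is licit because $\sub_d(G)$ is measurable and carries full mass for each $\mu\in\text{DIRS}(G)$; verifying that $O_n$ and $K_n$ coincide on $\sub_d(G)$ is exactly the content of the NDSS computation above.
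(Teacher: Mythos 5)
Your proof is correct and follows essentially the same route as the paper's: the paper proves Theorem \ref{thm:general} simply by re-running the argument for Theorem \ref{thm:wud} with NSS replaced by NDSS, noting that the analog of Lemma \ref{K_n-open} makes the sets $K_n$ relatively open in $\sub_d(G)$, and then invoking hypotheses $(1)$ and $(2)$. Your device of passing to the genuinely open set $O_n=\{\gC\in\sub(G):\gC\cap(U_n\setminus V_n)=\emptyset\}$, whose $\mu$-measure coincides with that of $K_n$ for every $\mu\in\text{DIRS}(G)$, is exactly the detail that makes the paper's sketch (in particular the lower semicontinuity step) rigorous.
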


If $G$ possesses the Borel density theorem and $G$ is self-Chabauty-isolated then $(1)$ holds. By the previous paragraphs happy semisimple analytic groups enjoy these two properties, and hence $(1)$. 
%
%For a locally compact totally disconnected group $G$, the following are equivalent:
%\begin{itemize}
%\item $G$ has property NDSS.
%\item $G$ admits an open torsion-free subgroup.
%\item The space $\sub_d(G)$ is uniformly discrete.
%\end{itemize}

\paragraph{\bf $p$-adic groups.}
Note that a $p$-adic analytic group $G$ has NDSS,
%, hence, in view of the Borel density theorem (proved in \cite{GL}), if $G$ is an almost simple $p$-adic analytic group then 
and hence $\text{DIRS}(G)$ is uniformly discrete (in the obvious sense).
% satisfy a strong version of Theorem \ref{thm:general}, namely:
%
%\begin{thm}[See \cite{GL} for more details]
%Let $G$ be an almost simple $p$-adic analytic group. Then $\irs_d(G)$ is uniformly discrete.
%\end{thm}
Moreover, if $G\le\GL_n(\BQ_p)$ is a rational algebraic subgroup, then the first principal congruence subgroup $G(p\BZ_p)$ is a torsion-free open compact subgroup. 
In particular the space $\text{DIRS}(G)$ is $G(p\BZ_p)$-uniformly discrete.
Supposing further that $G$ is simple, then in view of the Borel density theorem we have:\\

{\it Let $(X,\mu)$ be a probability $G$-space essentially with no global fixed points.
Then the action of the congruence subgroup $G(p\BZ_p)$ on $X$ is essentially free.} 

\paragraph{\bf Positive characteristic.}
Algebraic groups over local fields of positive characteristic do not posses property NDSS, and the above argument does not apply to them. 

\begin{conj}\label{conj:wud}
Let $k$ be a local field of positive characteristic, let $\BG$ be simply connected absolutely almost simple $k$-group with positive $k$-rank 
and let $G=\BG(k)$ be the group of $k$-rational points. Then $\text{DIRS}(G)$ weakly uniformly discrete.
\end{conj}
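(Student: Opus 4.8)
The plan is to run the argument of Theorem~\ref{thm:general}, whose two inputs are compactness of $\text{DIRS}(G)$ and a substitute for property NDSS. The first input is already at hand: since $\BG$ is simply connected we have $G=G^+$, so $G$ is happy and Theorem~\ref{thm:G is isolated} gives that $G$ is self-Chabauty-isolated; combined with the Borel density theorem for IRS (Theorem~\ref{thm:borel density for IRS}) this yields compactness of $\text{DIRS}(G)$. (For the small-characteristic exceptional types excluded in Theorem~\ref{thm:borel density for IRS} one would first need to secure the corresponding density statement by hand.) Thus everything reduces to replacing the openness of the sets $K_n$, which in the NDSS proof came for free from the absence of small subgroups.

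First I would isolate precisely what breaks. Fix a uniformizer $\varpi$ and a deep congruence neighbourhood $U_N$. If $\Gamma$ is \emph{discrete} and $\gamma=1+\varpi^N X\in\Gamma\cap U_N$, then $\gamma^{p^m}=1+\varpi^{Np^m}X^{p^m}\to 1$, so $\langle\gamma\rangle$ cannot be discrete unless $\gamma$ has finite order; in characteristic $p$ this forces $\gamma$ to be unipotent of $p$-power order, equivalently $X$ nilpotent. Hence for $N$ large one has the clean description
$$
 \{\Gamma\in\sub_d(G):\Gamma\cap U_N\ne\{1\}\}=\{\Gamma\in\sub_d(G):\Gamma~\text{contains a nontrivial unipotent in}~U_N\},
$$
and the failure of Lemma~\ref{K_n-open} is caused solely by unipotent $p$-torsion elements degenerating to the identity along a Chabauty-convergent sequence. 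The problem thus becomes the purely unipotent one of bounding, uniformly over $\mu\in\text{DIRS}(G)$, the mass of subgroups carrying arbitrarily small nontrivial unipotents.

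To bound this mass I would restrict to ergodic $\mu$ (recovering the general case from the ergodic decomposition once the neighbourhood is shown uniform) and apply Theorem~\ref{thm:borel density for IRS}, so that almost every nontrivial subgroup is Zariski dense. Passing to the Bruhat--Tits building $\mathcal{B}$ of $G$, the presence of a small nontrivial unipotent in $\Gamma$ is equivalent to $\Gamma$ fixing a deep horoball, that is, to the random quotient $\Gamma\backslash\mathcal{B}$ having a thin part reaching depth $\sim N$. The statement to prove becomes that the expected normalized volume of the depth-$N$ thin part tends to $0$ as $N\to\infty$, uniformly over $\mu\in\text{DIRS}(G)$. I would try to extract this from the $G$-invariance of $\mu$ together with reduction theory on $\mathcal{B}$, organizing the small unipotents of a Zariski-dense discrete $\Gamma$ by a Zassenhaus-type lemma and controlling how the horospherical subgroups $U_P(\cO)$ distribute under the $G$-action.

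The crux---and the reason the statement remains a conjecture---is that the positive-characteristic setting admits no uniform Margulis lemma. In characteristic $0$ two noncommuting elements near the identity have a strictly smaller commutator, which iterates to contradict discreteness and yields a uniform systole bound; here the small unipotents have order $p$ and generate finite groups, so this contraction never gets off the ground, and uniform discreteness genuinely fails (conjugating a fixed lattice deep into a cusp produces arbitrarily small unipotents). One therefore cannot bound the thin part pointwise and must instead show that \emph{no} invariant measure charges arbitrarily deep thin parts---an averaged statement for which the sharp cusp-volume decay estimates available in characteristic $0$ have no known analogue. Producing such a uniform volume/recurrence bound on the horospherical contributions, valid simultaneously for every IRS, is the main obstacle.
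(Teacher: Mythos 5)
The statement you were asked to prove is Conjecture \ref{conj:wud}: the paper offers no proof of it, only the remark that algebraic groups over local fields of positive characteristic fail property NDSS (so the argument of Theorem \ref{thm:general} does not apply) and that the only known case is $k$-rank one, due to Levit \cite{Levit:wud}. Your proposal, appropriately, is not a proof either --- you say so explicitly --- so there is no completed argument to compare. What can be assessed are your reductions, and these are sound and consistent with the paper's framework: for simply connected $\BG$ one has $G=G^+$, so $G$ is happy, Theorem \ref{thm:G is isolated} gives self-Chabauty-isolation, and together with the Borel density theorem for IRS (Theorem \ref{thm:borel density for IRS}, modulo the excluded small-characteristic types, which you correctly flag) this yields compactness of $\text{DIRS}(G)$, which is ingredient $(1)$ of Theorem \ref{thm:general}. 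Your diagnosis of the failure of ingredient $(2)$ is also correct: in characteristic $p$ an element $1+\varpi^N X$ of a discrete subgroup satisfies $(1+\varpi^N X)^{p^m}=1+(\varpi^N X)^{p^m}$, so discreteness forces $X$ to be nilpotent, and the obstruction to openness of the sets $K_n$ of Lemma \ref{K_n-open} is precisely the presence of unipotent $p$-torsion arbitrarily close to the identity.

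The gap is the one you name yourself: the method requires a bound, uniform over all $\mu\in\text{DIRS}(G)$, on the $\mu$-mass of subgroups containing a nontrivial unipotent in a deep congruence neighbourhood, and no such bound is known. Since pointwise uniform discreteness genuinely fails in positive characteristic (conjugates of a fixed non-uniform lattice carry arbitrarily small unipotents), any proof must be an averaged statement, and the characteristic-zero tools --- the Zassenhaus/Margulis commutator contraction underlying Corollary \ref{cor:KM} --- have no analogue, because small unipotents generate finite $p$-groups and the contraction never starts. That is exactly why the statement remains a conjecture rather than a theorem in the paper. If you want to test the viability of your building/horoball program, the natural benchmark is to recover Levit's rank-one theorem \cite{Levit:wud} by your method: in rank one the thin part decomposes into horoballs stabilized by unipotent subgroups, so the reduction-theoretic bookkeeping you propose is at its most tractable there, and success or failure in that case would show whether the averaged volume estimate you need can be extracted from invariance of $\mu$ alone.
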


The analog of Corollary \ref{cor:KM} in positive characteristic was proved in \cite{Ali,Rag}. 
A. Levit proved Conjecture \ref{conj:wud} for $k$-rank one groups \cite{Levit:wud}.

\paragraph{Local Rigidity}

Combining Theorem 7.2 and Proposition 7.9 of  \cite{non-archimedean} we obtain the following extension of Theorem \ref{thm:CLR}:

\begin{theorem}(Chabauty local rigidity \cite{non-archimedean})
\label{thm:Chabauty local rigidity for lattices in algebraic groups}
Let $G$ be a semisimple analytic group and $\Gamma$ an irreducible lattice in $G$. If $\gC$ is locally rigid then it is also Chabauty locally rigid.
%Let $k$ be a non-Archimedean local field, $\GG$ a connected simply connected semisimple $k$-algebraic group without $k$-anisotropic factors and $G = \GG(k)$. 
%If $\mathrm{rank}(G) \ge 2$ then $\Gamma$ admits a Chabauty neighborhood consisting of  conjugates of $\Gamma$ by automorphisms of $G$. 
%If  $G$ is  defined over zero characteristic then these automorphisms are inner.
\end{theorem}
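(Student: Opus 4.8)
The plan is to run the argument of Proposition~\ref{thm:CLR} in the broader analytic setting, the essential new difficulty being that a semisimple analytic group over a local field of positive characteristic need not enjoy property NSS, nor even NDSS, so one can no longer simply fix an identity neighbourhood containing no nontrivial subgroup. First I would fix a finite presentation $\langle s_1,\dots,s_k \mid R\rangle$ of $\gC$ (finitely presented, as is any locally rigid lattice of interest here). Chabauty local rigidity is the assertion that the conjugacy class of $\gC$ is Chabauty-open, so it suffices to show that every closed subgroup $H$ sufficiently Chabauty-close to $\gC$ is a conjugate of $\gC$.

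The core of the proof is to manufacture, from such an $H$, a homomorphism $\phi_H\colon \gC \to H \le G$ lying close to the inclusion in $\Hom(\gC,G)$. Choosing $h_i \in H$ close to $s_i$ for each generator, every relator value $w(h_1,\dots,h_k)$ lies in $H$ and is close to $w(s_1,\dots,s_k)=1$. In Proposition~\ref{thm:CLR} property NSS forced these values to be exactly trivial, and \textbf{this is the step I expect to be the main obstacle}: in positive characteristic $H$ may a priori carry nontrivial elements arbitrarily near the identity, so vanishing of the relator values is no longer automatic. Here I would feed in the Chabauty-continuity input of \cite{non-archimedean} (Theorem~7.2), in the spirit of the self-Chabauty-isolation results behind Theorem~\ref{thm:G is isolated}, which furnishes a \emph{finite} family of open sets detecting the ambient group: a closed subgroup meeting all of them is forced to be large, hence non-discrete. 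Combined with discreteness of $\gC$, this prevents a subgroup Chabauty-close to $\gC$ from accumulating small nontrivial elements while staying near a lattice, so that, after a harmless adjustment of the $h_i$, the relator values vanish and $s_i \mapsto h_i$ defines a genuine homomorphism close to the inclusion.

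With $\phi_H$ in hand, local rigidity of $\gC$ applies directly: a homomorphism close enough to the inclusion is a conjugate of it, so $H \supseteq g\gC g^{-1}$ for some $g$ near the identity; this is the content packaged in Proposition~7.9 of \cite{non-archimedean}, relating Weil local rigidity to the Chabauty picture. It then remains to upgrade containment to equality. Any such $H$ is itself a lattice of covolume at most $\covol(\gC)$, so $g\gC g^{-1}$ has finite index in $H$; since $\gC$ is finitely generated it admits only finitely many overgroups of each finite index, and among these only $\gC$ itself can have a conjugate Chabauty-close to $\gC$, the others being at definite Chabauty distance because $\gC$, being discrete, leaves a gap around its elements. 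Hence for $H$ sufficiently close one obtains $H = g\gC g^{-1}$, proving that the conjugacy class of $\gC$ is Chabauty-open. The positive-characteristic failure of NSS/NDSS in the homomorphism step is the genuine difficulty, and it is precisely the self-isolation machinery of \cite{non-archimedean} that replaces the elementary NSS argument used in Proposition~\ref{thm:CLR}.
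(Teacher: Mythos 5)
Your high-level architecture --- from each closed subgroup $H$ Chabauty-close to $\gC$ extract a homomorphism $\gC\to H$ near the inclusion, feed it to local rigidity to get $g\gC g^{-1}\le H$, then upgrade containment to equality --- is the right one, and it is exactly how the paper assembles the statement, namely by combining Theorem 7.2 and Proposition 7.9 of \cite{non-archimedean}. The gap is at the step you yourself flag as the main obstacle: the mechanism you propose there is invalid. Self-Chabauty isolation (Theorem \ref{thm:G is isolated}) provides finitely many open sets $U_1,\dots,U_n$ such that the \emph{only} closed subgroup meeting all of them is $G^+$; this constrains subgroups Chabauty-close to the \emph{whole group}, and says nothing about subgroups Chabauty-close to the discrete group $\gC$. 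Chabauty proximity to $\gC$ is perfectly compatible with $H$ having nontrivial elements accumulating at the identity (such elements are close to $1\in\gC$, so they violate no closeness requirement), and such an $H$ need not meet the sets $U_i$ at all; in positive characteristic there really are nontrivial discrete subgroups inside arbitrarily small identity neighbourhoods --- the failure of NDSS that the paper itself records --- so ``$H$ has small elements'' never forces $H$ to be large. Your inference ``meets all $U_i$ $\Rightarrow$ large $\Rightarrow$ non-discrete, hence an $H$ near $\gC$ has no small elements'' is therefore a non sequitur, and with it both the vanishing of the relator values and the discreteness of $H$ remain unproved. Citing Theorem 7.2 of \cite{non-archimedean} at this point does not repair the argument: that theorem (a statement about Chabauty neighbourhoods of $\gC$, not of $G$, and not obtained from self-isolation) is precisely the hard content of the present statement --- the paper's proof \emph{is} the citation of it together with Proposition 7.9. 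In the paper, self-isolation serves a different purpose: compactness of the space of proper closed subgroups, as in Lemma \ref{lem:PSUB}.

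Two further problems. Your opening assumption that $\gC$ is finitely presented because it is a locally rigid lattice is unjustified: in positive characteristic non-uniform lattices may fail to be finitely presented, or even finitely generated, which is exactly why the presentation-based proof of Proposition \ref{thm:CLR} does not carry over verbatim and why the general statement rests on the hypotheses built into the cited results. And in your endgame, the claim that only finitely many overgroups of $g\gC g^{-1}$ of bounded index inside $G$ can occur requires those overgroups to be discrete subgroups of $G$ (the standard proof uses Borel density and covolume bounds), and calling $H$ a lattice with covolume at most $\covol(\gC)$ presupposes discreteness of $H$ as well; in the Lie case this was precisely the service rendered by the neighbourhood $U$ with $H\cap U=\{1\}$, which your argument has not replaced.
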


Let us also mention the following generalization of the classical Weil local rigidity theorem:

\begin{theorem}(\cite[Theorem 1.2]{local-rigidity}
	\label{thm:local rigidity of uniform in general cat0-intro}
	%[Theorem \ref{thm:local rigidity of uniform in general cat0}]
	Let $X$ be a proper geodesically complete $\text{CAT}(0)$ space without Euclidean
	factors and with $\text{Isom}{X}$ acting cocompactly. 
	Let $\Gamma$ be a uniform lattice
	in $\text{Isom}{X}$. Assume that for every de Rham factor $Y$ of $X$ isometric
	to the hyperbolic plane the projection of $\Gamma$ to $\text{Isom}{Y}$
	is non-discrete. Then $\Gamma$ is locally rigid.
\end{theorem}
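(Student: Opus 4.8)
The plan is to encode a deformation geometrically and then reduce, via the structure theory of $\mathrm{CAT}(0)$ spaces, to a Lie part governed by classical Weil rigidity and a totally disconnected part governed by a separate geometric argument. First I would record that, since $X$ is contractible and $X/\Gamma$ is compact, the uniform lattice $\Gamma$ is finitely presented. Given a homomorphism $\rho\colon\Gamma\to G:=\mathrm{Isom}(X)$ close to the inclusion $\iota$, the $\rho$-action of $\Gamma$ on $X$ is again proper and cocompact (an openness property of uniform lattice embeddings), and by patching a choice over a compact fundamental domain one builds a continuous map $f\colon X\to X$ which is $\Gamma$-equivariant for $\iota$ on the source and $\rho$ on the target and lies at bounded distance from the identity. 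Local rigidity then amounts to straightening such an $f$ into a single isometry $g$ realizing $\rho(\gamma)=g\,\iota(\gamma)\,g^{-1}$.

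Next I would invoke the structure theory of Caprace and Monod: after passing to a canonical finite-index open subgroup, $X$ splits as a product of a symmetric space $M$ of non-compact type and a totally disconnected $\mathrm{CAT}(0)$ factor, and correspondingly $G$ virtually decomposes as a direct product $S\times D$ of a semisimple Lie group $S$ acting on $M$ (with no compact factors and, by hypothesis, no Euclidean part) and a totally disconnected group $D$ acting on the remaining factor. The strategy is then to treat the two factors separately and to glue the resulting conjugations.

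For the factor $S$, the first-order deformations of $\Gamma$ are governed by $H^1(\Gamma,\mathfrak{s})$, where $\mathfrak{s}=\mathrm{Lie}(S)$ carries the adjoint action and decomposes over the irreducible symmetric-space factors. Classical Weil local rigidity yields $H^1(\Gamma,\mathfrak{s}_i)=0$ for every factor $\mathfrak{s}_i$ different from $\mathfrak{sl}_2(\mathbb{R})$, the sole exception being a factor $Y\cong\mathbb{H}^2$, where surface-group lattices carry Teichm\"uller deformations. This is exactly where the hypothesis enters: if the projection of $\Gamma$ to $\mathrm{Isom}(Y)$ is non-discrete, then it is not a discrete (Fuchsian) group and so cannot be surface holonomy detecting $Y$, and the corresponding cohomology $H^1(\Gamma,\mathfrak{sl}_2(\mathbb{R}))$ vanishes by the same density argument that underlies Weil rigidity in higher rank. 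Hence $H^1(\Gamma,\mathfrak{s})=0$ and $\Gamma$ is infinitesimally rigid along $S$.

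For the factor $D$ there is no Lie algebra, so infinitesimal methods are unavailable and I would argue purely geometrically: a deformation close to $\iota$ agrees with it, modulo a small compact open subgroup, on a finite generating set, and finite presentation forces each relator into a controlled neighbourhood of the identity; geodesic completeness together with cocompactness of the $D$-action then pins down the deformation as a conjugate of $\iota$ by an element of a compact open subgroup. Upgrading the infinitesimal rigidity along $S$ to genuine local rigidity by straightening $f$, and assembling the factorwise conjugating elements into a single $g\in G$ through the product structure, completes the proof. The main obstacle, I expect, is this totally disconnected rigidity: lacking a Lie algebra one cannot run an implicit-function argument, and the deformation must be controlled entirely through the geometry of the (possibly tree-like) factor, which is precisely where geodesic completeness and cocompactness become indispensable; a close second is the gluing step, where the non-discreteness hypothesis is needed again to exclude deformations that entangle an $\mathbb{H}^2$ factor with the remaining factors.
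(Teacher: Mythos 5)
You should first note that the survey contains no proof of this statement: it is quoted verbatim from Gelander--Levit \cite{local-rigidity}, so the comparison must be made with the proof given there. Your skeleton --- finite presentation, the Caprace--Monod splitting of (a finite index open subgroup of) $\mathrm{Isom}(X)$ as $S\times D$ with $S$ a semisimple Lie group and $D$ totally disconnected, Weil rigidity on the Lie side with the $\BH^2$ hypothesis, and a separate geometric argument on the totally disconnected side --- is indeed the skeleton of that proof. But two of your steps conceal the actual mathematical content. The first is the parenthetical assertion that a homomorphism close to the inclusion again acts properly and cocompactly, which you call ``an openness property of uniform lattice embeddings.'' For $G$ a Lie group this is Weil's classical openness theorem, but for $G=\mathrm{Isom}(X)$, which has a nontrivial totally disconnected part, it is precisely \emph{topological local rigidity}, i.e.\ Theorem 1.1 of \cite{local-rigidity}: it is the main technical result of that paper, its proof (via nerves of equivariant open covers and the simple connectedness of $X$) occupies most of it, and the statement is false for non-uniform lattices. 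A blind proof cannot take it as a known black box and still claim to prove the theorem it was developed for.

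The second, and more serious, gap is that your plan to ``treat the two factors separately and to glue the resulting conjugations'' does not parse for the lattices that are the heart of the matter. A uniform lattice $\Gamma\le S\times D$ need not be commensurable to a product of lattices in the factors. For an irreducible uniform lattice --- e.g.\ a cocompact arithmetic lattice in $\SL_2(\BR)\times\SL_2(\BQ_p)$, acting on the product of $\BH^2$ with a regular tree --- both projections are \emph{dense}: the projection to $S$ is not a lattice, so Weil's theorem and the vanishing of $H^1$ with coefficients in $\Lie(S)$ are not available for it, and the projection to $D$ is not discrete, so your geometric argument on the totally disconnected side has nothing to act on; nor does a deformation of $\Gamma$ split a priori into a pair of deformations of the projections. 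The proof in \cite{local-rigidity} confronts exactly this point, for instance by intersecting $\Gamma$ with $S\times K$ for a compact open subgroup $K\le D$ (such an intersection is a uniform lattice in $S\times K$, and it does project to a uniform lattice in $S$) and combining this with Chabauty-topology arguments; the $\BH^2$ hypothesis enters through the structure theory of the irreducible pieces of the lattice, ruling out direct factors that are cocompact surface groups sitting discretely in a $\PSL_2(\BR)$ factor. Note also that Weil's vanishing theorem is not a ``density argument,'' so your appeal to ``the same density argument that underlies Weil rigidity in higher rank'' is an attribution, not a proof, of the $H^1$ vanishing you need. Until the proposal can handle deformations of irreducible lattices with dense projections, it does not cover the general case.
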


\paragraph{Farber Property}

The proof presented above for Theorem \ref{thm:7main} follows the lines developed at \cite{non-archimedean} and
is simpler and applies to a more general setup than the original proof from \cite{7S}. 
In particular, the following general version of \cite[Theorem 4.4]{7S} is proved in \cite{non-archimedean}:

\begin{theorem}\cite[Theorem 1.1]{non-archimedean}
	\label{thm:accumulation points of invariant random subgroups}
	%Let $G$ be a semisimple analytic group 
	Let $G$ be a semisimple analytic group. 
	Assume that $G$ is happy, has property $(T)$ and  $\mathrm{rank}(G) \ge 2$.
	Let $\Gamma_n$ be a sequence of pairwise non-conjugate irreducible lattices in $G$.  Then $\gC_n$ is Farber.
\end{theorem}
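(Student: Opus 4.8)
The plan is to transcribe the proof of Theorem~\ref{thm:7main} into the semisimple analytic setting, replacing each of its three ingredients by the general counterpart supplied above. Since $G$ is second countable, $\sub(G)$ is compact metrizable and so is the simplex $\irs(G)$ of invariant probability measures; it therefore suffices to show that \emph{every} subsequential weak-$*$ limit $\nu$ of $(\mu_{\gC_n})$ equals $\gd_{\{1\}}$, as this forces the whole sequence to converge to $\gd_{\{1\}}$. First I would record two compactness statements. Because $G$ is assumed to have property $(T)$, the set $\text{EIRS}(G)$ of ergodic (extreme) IRS is closed, hence compact, by \cite{GW}. Because $G$ is happy, Theorem~\ref{thm:G is isolated} shows that $G^+$ is self-Chabauty-isolated, whence (as noted immediately after that theorem) the space of proper IRS is compact. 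Each $\mu_{\gC_n}$ is ergodic (the $\gC_n$ being irreducible lattices) and proper, so any subsequential limit $\nu$ lands in the compact intersection of these two sets; in particular $\nu$ is ergodic and $\nu\ne\gd_G$.

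Next I would identify $\nu$ through the semisimple analog of the Stuck--Zimmer classification quoted above for simple groups: in higher rank an ergodic proper IRS is either the trivial IRS $\gd_{\{1\}}$ or of the form $\mu_\gL$ for a lattice $\gL\le G$. The Borel density theorem for IRS (Theorem~\ref{thm:borel density for IRS}) underlies this dichotomy, constraining the random subgroup under $\nu$ to sit between normal subgroups with prescribed Zariski closure, and together with the irreducibility of the approximating $\gC_n$ this excludes IRS supported on proper normal factors. It then remains to rule out $\nu=\mu_\gL$. Here I would invoke Chabauty local rigidity: an irreducible higher-rank lattice $\gL$ is locally rigid, so by Theorem~\ref{thm:Chabauty local rigidity for lattices in algebraic groups} the set $U=\{H\in\sub(G): H\ \text{is conjugate to}\ \gL\}$ is Chabauty open with $\mu_\gL(U)=1$. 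If a subsequence $\mu_{\gC_{n_j}}\to\mu_\gL$, then weak-$*$ convergence on the open set $U$ gives $\liminf_j \mu_{\gC_{n_j}}(U)\ge\mu_\gL(U)=1$; but $\mu_{\gC_{n_j}}$ is supported on conjugates of $\gC_{n_j}$, so $\mu_{\gC_{n_j}}(U)>0$ forces $\gC_{n_j}$ to be conjugate to $\gL$, making all large members of the subsequence conjugate to one another and contradicting the hypothesis that the $\gC_n$ are pairwise non-conjugate. Hence $\nu=\gd_{\{1\}}$, and the sequence is Farber.

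The main obstacle is the classification input, namely the Stuck--Zimmer-type rigidity for ergodic IRS in the full generality of happy semisimple analytic groups over arbitrary local fields. In the non-simple case one must genuinely exclude ergodic IRS supported on proper normal factors as limits of the irreducible $\mu_{\gC_n}$ (this is where Theorem~\ref{thm:borel density for IRS} and the irreducibility of the $\gC_n$ do the real work), and in positive characteristic one must separately secure the local rigidity of higher-rank irreducible lattices required to apply Theorem~\ref{thm:Chabauty local rigidity for lattices in algebraic groups}. Once these structural facts are available, the topological skeleton of the argument---compactness of $\text{EIRS}(G)$ from property $(T)$, isolation of $\gd_G$ from self-Chabauty-isolation, and isolation of each $\mu_\gL$ from Chabauty local rigidity---is exactly as in the simple case, and $\gd_{\{1\}}$ emerges as the unique accumulation point.
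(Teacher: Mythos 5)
Your proposal follows essentially the same route as the paper: the paper's proof of this theorem (deferred to \cite{non-archimedean}) is exactly the argument it gives for Theorem \ref{thm:7main} --- compactness of $\text{EIRS}(G)$ from property $(T)$ via \cite{GW}, the Stuck--Zimmer classification of ergodic IRS in higher rank, and the isolation of $\gd_G$ (self-Chabauty isolation, Theorem \ref{thm:G is isolated}) and of each $\mu_\gL$ (Chabauty local rigidity, Theorem \ref{thm:Chabauty local rigidity for lattices in algebraic groups}), which forces $\gd_{\{1\}}$ to be the unique accumulation point of $\text{EIRS}(G)$. Your subsequential-limit phrasing and your explicit flagging of the two genuinely hard inputs (the classification for non-simple groups and local rigidity in positive characteristic) correspond precisely to the points where the paper itself defers to \cite{non-archimedean}.
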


\paragraph{Farber property for congruence subgroups.}

%In certain situations it is possible to extend the  method of invariant random subgroups to obtain Benjamini---Schramm convergence for congruence subgroups, even in the lack of property (T).

Relying on intricate estimates involving the trace formula, Raimbault \cite{Raimbault} and Fraczyk \cite{Fraczyk} were able to establish Benjamini---Schramm convergence for every sequence of congruence lattices in the rank one groups $\mathrm{SL}_2(\mathbb{R})$ and $\mathrm{SL}_2(\mathbb{C})$.

Using property ($\tau$) as a replacement for property (T), Levit \cite{levit2017benjamini} established Benjamini---Schramm convergence for every sequence of congruence lattices in any higher-rank semisimple group $G$ over  local fields. Whenever lattices in $G$ are known to satisfy the congruence subgroup property   this applies  to all irreducible lattices in $G$. 

%%%%%%%%%%%%%%%%%%%%%%%%%%%%%%%%%

%%%%%%%%%%%%%%%%%%

\subsection{The IRS compactification of moduli spaces}
%We have seen above some demonstrations of to use the compactness of $\irs(G)$ in order to study properties of its special points --- the lattices. 
One may also use $\irs(G)$ in order to obtain new compactifications of certain natural spaces. 

\begin{exam}
Let $\gS$ be a closed surface of genus $\ge 2$. Every hyperbolic structure on $\gS$ corresponds to an IRS in $\PSL_2(\BR)$. Indeed one take a random point and a random tangent vector w.r.t the normalized Riemannian measure on the unit tangent bundle and consider the associated embedding of the fundamental $\pi_1(\gS)$ in $\PSL_2(\BR)$ via deck transformations.

Taking the closure in IRS$(G)$ of the set of hyperbolic structures on $\gC$, one obtains an interesting compactification of the moduli space of $\gS$.  

\begin{prob}
Analyse the IRS compactification of $\text{Mod}(\gS)$.
\end{prob} 

Note that the resulting compactification is similar to (but is not exactly) the Deligne--Munford compactification.

\end{exam}

%%%%%%%%%%%%%%%%%%%%%%%%%%%%%%%%%%%%%%%%%%%%%

\section{The Stuck--Zimmer theorem}\label{sec:SZ}

One of the most remarkable manifestations of rigidity for  invariant random subgroups is the following celebrated result due to Stuck and Zimmer \cite{SZ}.

\begin{theorem}
\label{thm:the Stuck Zimmer theorem}
Let $k$ be a local field and  $G$ be connected, simply connected semi-simple linear algebraic $k$-group. Assume that $G$ has no $k$-anisotropic factors,    has Kazhdan's property $(T)$ and $\mathrm{rank}_k(G) \ge 2$. Then every properly ergodic and irreducible probability measure preserving Borel action of $G$ is essentially free.
\end{theorem}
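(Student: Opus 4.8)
The plan is to reduce the statement to the triviality of a single invariant random subgroup and then to exclude the non-trivial possibility using the higher-rank hypotheses. Essential freeness of $G\act(X,m)$ means exactly that $G_x=\{1\}$ for $m$-almost every $x$, and, as recalled after Theorem~\ref{thm:BDT}, the stabilizer map $x\mapsto G_x$ is measurable and pushes $m$ forward to an IRS $\mu\in\irs(G)$. Since $m$ is ergodic so is $\mu$, so the whole theorem becomes the assertion that this ergodic stabilizer IRS equals $\gd_{\{1\}}$.

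First I would run $\mu$ through the Borel density theorem for IRS (Theorem~\ref{thm:borel density for IRS}): as $\mu$ is ergodic there are normal subgroups $N\le M\lhd G$ with
$$ N\le H\le M,\qquad H/N\ \text{discrete in}\ G/N,\qquad \overline{H}^{\mathrm{Z}}=M $$
for $\mu$-almost every $H$, and by ergodicity $N,M$ are almost surely constant. Since $G$ is simply connected it is the direct product of its $k$-simple factors, and $N,M$ are sub-products of these (up to the finite centre). Irreducibility now forbids the stabilizer from containing a whole factor, since such a factor would act trivially and hence non-ergodically; and it equally forbids the Zariski closure $M$ from being a proper sub-product, since then the complementary factors would act essentially freely and the action would be reducible. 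Hence $N=\{1\}$ and $M=G$, and I am reduced to the dichotomy: $\mu$-almost every stabilizer is either trivial (whence the action is essentially free) or discrete, non-trivial and Zariski dense in $G$.

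So suppose toward a contradiction that $m$-a.e.\ $G_x$ is discrete, non-trivial and Zariski dense. By proper ergodicity no orbit is conull, so the orbits $G/G_x$ carry no invariant probability measure of full mass and the $G_x$ need not be lattices; this is the genuinely hard regime, a ``random discrete subgroup'' with null orbits. Note that a Zariski-dense discrete subgroup is non-amenable, since by the Tits alternative it contains a non-abelian free subgroup, so the stabilizers are non-amenable almost surely. Here I would invoke the higher-rank structure theory of Nevo--Zimmer. Fixing a minimal parabolic $P\le G$ with maximal $k$-split torus $A$ and unipotent radical $V$, Moore's ergodicity theorem (Howe--Moore, valid for the simple factors of $G$) makes the non-compact group $A$, and hence $P$, act ergodically on $(X,m)$, while the boundary action $G\act G/P$ is amenable. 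The Nevo--Zimmer structure theorem then applies: an ergodic finite-measure-preserving action of a group of $\mathrm{rank}_k(G)\ge 2$ whose stabilizers fail to be amenable must admit a non-trivial measurable $G$-equivariant projective factor $X\to G/Q$ for some parabolic $Q$ with $P\le Q\subsetneq G$. Pushing $m$ forward along such a map produces a $G$-invariant probability measure on $G/Q$; but $G/Q$ carries no $G$-invariant probability measure when $Q\neq G$, as $G$ is non-compact and $Q$ is a proper parabolic. This contradiction excludes the second alternative, forcing $G_x=\{1\}$ almost surely.

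The crux, and the step I expect to be the main obstacle, is the extraction of the projective factor, i.e.\ the Nevo--Zimmer structure theorem itself, which is precisely where both hypotheses are consumed. Property $(T)$ is used to kill the ``amenable'' branch of the dichotomy, through the vanishing of the relevant cocycle and superrigidity, while $\mathrm{rank}_k(G)\ge 2$ is indispensable for the intermediate-factor theorem, which has no rank-one analogue. In practice the real work lies in analysing the conditional measures of $m$ along the $A$-action inside the amenable group $P$ and in showing that the solvable radical $AV$ of $P$ cannot act in a relatively measure-preserving, non-projective fashion unless the stabilizers are normal, an option already eliminated by irreducibility. Once the projective quotient is produced, the clash with $G$-invariance of a probability measure closes the argument immediately.
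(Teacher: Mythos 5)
Your reduction to the stabilizer IRS, and the use of Borel density plus irreducibility to reach the dichotomy (trivial stabilizers versus discrete, Zariski-dense, hence non-amenable ones) is sound and matches how the paper frames the problem. The gap is exactly at the step you yourself flag as the crux. There is no Nevo--Zimmer theorem of the form you invoke. The Nevo--Zimmer structure theorem concerns \emph{stationary} actions: an ergodic stationary action of a higher-rank group (under suitable mixing hypotheses) either has invariant measure or admits a projective factor $G/Q$. Applied to a measure-preserving action it is vacuous, because the invariant-measure branch of its dichotomy already holds. Moreover, a statement of the form ``an ergodic p.m.p.\ action with non-amenable stabilizers admits a $G$-equivariant factor $X \to G/Q$ with $Q \subsetneq G$ parabolic'' cannot be a theorem in the way you use it: since, as you note, $G/Q$ carries no $G$-invariant probability measure, that statement is literally equivalent to ``ergodic p.m.p.\ actions of higher-rank groups never have non-amenable stabilizers,'' which is essentially the assertion you are trying to prove --- the appeal is circular. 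What Stuck--Zimmer actually use is the Nevo--Zimmer \emph{intermediate factor} theorem, applied not to $X$ but to the product $X \times G/P$ equipped with its stationary (non-invariant) measure: every $G$-factor intermediate between $X$ and $X \times G/P$ is of the form $X \times G/Q$. From a non-trivial stabilizer one manufactures such an intermediate factor, and the dichotomy this yields is: either the stabilizers are central, or the action is \emph{weakly amenable} in Zimmer's sense. It is this second branch --- an amenability property of the orbit structure, not of the stabilizers --- that property (T) excludes (Proposition \ref{thm:property T implies not weakly amenable}).

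The tell-tale sign that something is off is that in your argument property (T) does no logical work: amenability of stabilizers is already ruled out by Borel density and the Tits alternative, and your version of the ``structure theorem'' is asserted for all groups of rank at least $2$. If your proof were correct, it would therefore establish the Stuck--Zimmer theorem for all higher-rank semisimple groups regardless of property (T), e.g.\ for $\SL_2(\BR) \times \SL_2(\BR)$ and $\SL_2(\BQ_p) \times \SL_2(\BQ_p)$ --- but that is precisely Question \ref{quest:without property T}, which the paper records as open. Indeed the paper stresses that the intermediate-factor aspect of the proof ``is entirely independent of property (T)''; the hypothesis enters only through the exclusion of weak amenability, via the vanishing of cocycles into amenable groups. (A smaller issue: in positive characteristic the Tits alternative requires finite generation, and stabilizers need not be finitely generated, so even the non-amenability of Zariski-dense discrete stabilizers needs extra care there.)
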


We recall that a probability measure preserving action is properly ergodic if it is ergodic and not essentially transitive, and is irreducible if every non-central normal subgroup is acting ergodically. 

In the work of Stuck and Zimmer   $G$ is assumed to be a   Lie group. The  modifications necessary to deal with arbitrary local fields were carried on in \cite{Levit}. Much more generally, Bader and Shalom obtained a variant of the Stuck--Zimmer theorem for products of locally compact groups with property (T) in  \cite{BSh}.

The connection between invariant random subgroups and stabilizer structure for probability measure preserving actions allows one to derive the following:

\begin{theorem}
\label{thm:IRS rigidity}
Let $G$ be as in Theorem \ref{thm:the Stuck Zimmer theorem}. Then any irreducible invariant random subgroup of $G$ is either $\delta_{\{e\}}, \delta_G $ or $\mu_\Gamma$ for some irreducible lattice $\Gamma$ in $G$.
\end{theorem}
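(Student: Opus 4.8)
The plan is to derive Theorem~\ref{thm:IRS rigidity} from the Stuck--Zimmer theorem (Theorem~\ref{thm:the Stuck Zimmer theorem}) by exploiting the correspondence between invariant random subgroups and stabilizers of probability measure preserving actions described in the introduction. First I would reduce to the ergodic case: since the extreme points of $\irs(G)$ are the ergodic IRS, and every IRS is a barycenter of ergodic ones, it suffices to classify the irreducible ergodic IRS. So fix an irreducible ergodic IRS $\mu$ and, using the result cited as \cite[Theorem 2.6]{7S}, realize $\mu$ as the stabilizer distribution of an ergodic pmp action $G\act(X,m)$, i.e.\ $\mu$ is the push-forward of $m$ under the stabilizer map $x\mapsto G_x$. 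The hypothesis that $\mu$ is irreducible should be arranged to correspond to the action $(X,m)$ being irreducible in the sense of the Stuck--Zimmer theorem (every non-central normal subgroup acts ergodically); this matching of ``irreducible'' on the two sides is a point that must be checked rather than assumed.

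Next I would split according to the Stuck--Zimmer dichotomy applied to $(X,m)$. The theorem says the action is either essentially transitive or essentially free (after excluding the properly ergodic case, which is exactly the one ruled out). If the action is essentially free, then the stabilizer $G_x$ is trivial for $m$-almost every $x$, so $\mu=\delta_{\{e\}}$. If instead the action is essentially transitive, then $(X,m)$ is measurably isomorphic to $G/\Gamma$ with its $G$-invariant probability measure for some closed subgroup $\Gamma$ of finite covolume, i.e.\ a lattice; the stabilizer of the point $g\Gamma$ is the conjugate $g\Gamma g^{-1}$, and hence the stabilizer distribution is precisely $\mu_\Gamma$. The degenerate sub-case where $\Gamma=G$ gives $X$ a single point and yields $\mu=\delta_G$. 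Thus the three possibilities $\delta_{\{e\}},\delta_G,\mu_\Gamma$ account for all ergodic irreducible IRS, and irreducibility of the IRS forces $\Gamma$ to be an irreducible lattice.

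The main obstacle I expect is the passage between the IRS and a pmp action to which Stuck--Zimmer genuinely applies. The realization \cite[Theorem 2.6]{7S} produces \emph{some} pmp action with the prescribed stabilizer distribution, but one must ensure that ergodicity and especially irreducibility of the chosen model action can be guaranteed from the corresponding properties of $\mu$; a priori the natural model (for instance the Poisson-type or coupling construction) need not be irreducible even when $\mu$ is. One therefore has to either choose the realization carefully or argue that any ergodic realization of an irreducible IRS is itself irreducible as an action. A secondary technical point is verifying that the essentially transitive case really yields a \emph{lattice} rather than merely a finite-covolume closed subgroup with components, which uses that $G$ is simply connected semisimple so that discrete finite-covolume subgroups are exactly lattices; this is where the discreteness conclusion of the Borel density circle of ideas (Theorem~\ref{thm:borel density for IRS}) can be invoked to confirm that a proper irreducible IRS is supported on discrete subgroups.
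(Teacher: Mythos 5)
Your proposal is correct and follows essentially the same route as the paper: the paper derives this theorem from Theorem~\ref{thm:the Stuck Zimmer theorem} precisely via the correspondence between IRSs and stabilizer distributions of p.m.p.\ actions, splitting into the essentially free case (giving $\delta_{\{e\}}$) and the essentially transitive case (giving $\delta_G$ or $\mu_\Gamma$ for a lattice $\Gamma$). The technical points you flag --- matching the two notions of irreducibility, and checking that the transitive case yields a genuine (irreducible) lattice --- are exactly the details the paper's one-line derivation glosses over, so your treatment is, if anything, more careful than the source.
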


We would like to point out that the Stuck--Zimmer theorem is a generalization of the following normal subgroup theorem of Margulis.

\begin{theorem}
\label{theorem:normal subgroup theorem}
Let $G$ be as in Theorem \ref{thm:the Stuck Zimmer theorem} and $\Gamma $  an irreducible lattice in $G$. Then any non-trivial normal subgroup $N\lhd \Gamma$ is either central or has finite index in $\Gamma$.
\end{theorem}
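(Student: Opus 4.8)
The plan is to deduce the normal subgroup theorem from the rigidity of irreducible invariant random subgroups, Theorem \ref{thm:IRS rigidity} (equivalently, from the Stuck--Zimmer Theorem \ref{thm:the Stuck Zimmer theorem} itself). Let $N \lhd \Gamma$ be a nontrivial normal subgroup. If $[\Gamma : N] < \infty$ there is nothing to prove, so I would assume $[\Gamma:N] = \infty$ and aim to show that $N$ is central; the whole point is to manufacture, out of $N$, an invariant random subgroup of $G$ whose support consists of conjugates of $N$, and then read off the conclusion from the trichotomy $\{\delta_{\{e\}}, \delta_G, \mu_{\Gamma'}\}$.

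To build such an IRS I would induce a pmp action. Set $\Lambda = \Gamma/N$, an infinite countable group, and let $\Lambda \curvearrowright (Y,\nu)$ be its Bernoulli shift over a nonatomic base, which is ergodic (indeed mixing) and essentially free. Pulling back along the quotient map $\Gamma \twoheadrightarrow \Lambda$ gives a pmp action $\Gamma \curvearrowright (Y,\nu)$ whose stabilizer is almost surely exactly $N$. Since $\Gamma$ is a lattice, the induced action $G \curvearrowright X := G \times_\Gamma Y$ is a genuine pmp $G$-action, and a direct computation of stabilizers for an induced action shows that $\mathrm{Stab}_G([g,y]) = g\,\mathrm{Stab}_\Gamma(y)\,g^{-1} = gNg^{-1}$ for almost every point. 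Pushing the invariant probability measure forward under the stabilizer map therefore yields an IRS $\mu$ supported on the conjugates of $N$.

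With $\mu$ in hand I would invoke Theorem \ref{thm:IRS rigidity}: an irreducible IRS of $G$ is one of $\delta_{\{e\}}, \delta_G, \mu_{\Gamma'}$. The case $\delta_{\{e\}}$ is impossible because $\mu$ lives on conjugates of the nontrivial $N$, and $\delta_G$ is impossible because those conjugates are discrete (being subgroups of the lattice $\Gamma$) while $G$ is not. Hence $\mu = \mu_{\Gamma'}$, forcing a random subgroup --- i.e.\ $N$ itself --- to be a lattice; but a lattice contained in the lattice $\Gamma$ has finite index, contradicting $[\Gamma:N]=\infty$. The contradiction shows that no nontrivial, non-central, infinite-index $N$ can exist, which is exactly the assertion that every normal subgroup is central or of finite index. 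Alternatively, one runs the same construction against Theorem \ref{thm:the Stuck Zimmer theorem}: the induced action is properly ergodic since $N$ is not a lattice, so essential freeness would be contradicted by the nontrivial stabilizers $gNg^{-1}$.

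The step I expect to be the crux is verifying that $\mu$ (equivalently, the induced action) is \emph{irreducible} --- that every noncentral normal subgroup, i.e.\ every product of simple factors, of $G$ acts ergodically on $X$ --- since only then does Theorem \ref{thm:IRS rigidity} apply. This is precisely where the irreducibility of $\Gamma$ must be used, together with the mixing of the Bernoulli shift and Howe--Moore-type ergodicity of the factors; and it is also the point at which the dichotomy between the central and the finite-index cases is forced, because for a central $N$ the candidate IRS degenerates to $\delta_N$ and fails to be irreducible, so the theorem correctly does not apply.
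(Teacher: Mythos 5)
Your proposal follows exactly the route the paper intends: the paper offers no proof of Theorem \ref{theorem:normal subgroup theorem} beyond the remark that the Stuck--Zimmer theorem implies it, and the induced-Bernoulli construction you describe is precisely the standard way to realize that implication, so your write-up supplies details the survey omits. The construction, the stabilizer computation for the induced action, and the case analysis against Theorem \ref{thm:IRS rigidity} are all correct (as is the alternative via proper ergodicity and Theorem \ref{thm:the Stuck Zimmer theorem}). Two comments on the crux you flag. First, irreducibility of the induced action does hold when $N$ is noncentral, but the mechanism is not \emph{mixing plus Howe--Moore}: the pulled-back $\Gamma$-action on $Y$ is in fact not mixing, since $N$ acts trivially. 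The correct argument is that $N$, being normal in the irreducible lattice $\Gamma$ and noncentral, projects densely to each complementary subproduct $\prod_{j\neq i}G_j$ (its closure there is normalized by the dense projection of $\Gamma$, hence is closed and normal, and it cannot be central or else $[N,\Gamma]$ would be central and Borel density would force $N\subseteq Z(G)$). Then a $G_i$-invariant function on $G\times_\Gamma Y$ corresponds to a $\Gamma$-invariant function on $\bigl(\prod_{j\neq i}G_j\bigr)\times Y$; invariance under $N$ alone, which moves only the group coordinate by dense translations, makes it a function of $y$ only, and ergodicity of $\Gamma/N$ on the Bernoulli base finishes the proof. Second, your parenthetical explanation of the central case is not quite right: a point mass $\delta_N$ with $N\subseteq Z(G)$ is \emph{trivially} irreducible (everything acts ergodically on an atom), so the honest reading is that the trichotomy of Theorem \ref{thm:IRS rigidity} as stated ignores the center --- for simply connected $G$ the correct conclusion allows central stabilizers --- and the place where noncentrality genuinely enters your argument is the density-of-projections step above, exactly as your instinct suggested.
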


The Stuck--Zimmer theorem implies the normal subgroup theorem --- indeed the ideas that go into its proof build upon the ideas of Margulis. One key ingredient is the intermediate factor theorem of Nevo and Zimmer, which in turn generalizes the factor theorem of Margulis. We point out that this aspect of the proof is entirely independent of property (T). 

\begin{ques}
\label{quest:without property T}
Do Theorems \ref{thm:the Stuck Zimmer theorem} and \ref{thm:IRS rigidity} hold for all higher rank semisimple linear groups, regardless of property (T)?
\end{ques}

Observe that  the role played by  Kazhdan's property (T)  in the proof of Theorem \ref{thm:the Stuck Zimmer theorem} is in establishing the following fact.

\begin{prop}
\label{thm:property T implies not weakly amenable}
Let $G$ be a second countable locally compact group with Kazhdan's property $(T)$. Then every properly ergodic  probability measure preserving Borel action of $G$ is not weakly amenable in the sense of Zimmer.
\end{prop}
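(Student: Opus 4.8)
The plan is to argue by contradiction and to play the approximately finite structure forced by amenability against the spectral gap forced by property $(T)$. Recall that \emph{properly ergodic} means ergodic but not essentially transitive; since an ergodic p.m.p.\ action on a purely atomic space is supported on a single orbit, proper ergodicity forces the underlying space $(X,m)$ to be non-atomic. In every standard formulation, weak amenability of the action entails amenability, in Zimmer's sense, of the orbit equivalence relation $\mathcal{R}$ that it generates (full amenability of the action would require in addition that the point stabilisers be amenable); amenability of $\mathcal{R}$ is the only consequence the argument will use. So suppose, for contradiction, that $\mathcal{R}$ is amenable while the action is properly ergodic.

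First I would produce a non-trivial asymptotically invariant sequence. An amenable ergodic p.m.p.\ equivalence relation on a non-atomic standard space is approximately finite: by the Connes--Feldman--Weiss theorem it is hyperfinite, hence generated (up to measure-preserving isomorphism) by a single ergodic transformation, i.e.\ by an action of the amenable group $\mathbb{Z}$. A Rokhlin tower (F\o lner) construction then yields measurable sets $A_n \subset X$ with $m(A_n)\bigl(1-m(A_n)\bigr)$ bounded away from $0$ and $m(g A_n \,\triangle\, A_n) \to 0$ uniformly on each compact subset of $G$ — the sets are asymptotically $\mathcal{R}$-invariant, and since $\mathcal{R}$ is generated by the $G$-action they are asymptotically $G$-invariant. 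In other words, the action fails to be strongly ergodic.

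To close the argument I would pass to the Koopman representation. Setting $\xi_n = \bigl(\mathbf{1}_{A_n} - m(A_n)\bigr)/\|\mathbf{1}_{A_n} - m(A_n)\|_2$ gives unit vectors in $L^2_0(X,m) = L^2(X,m) \ominus \mathbb{C}$; the lower bound on $m(A_n)(1-m(A_n))$ controls the normalisation and the condition $m(gA_n \,\triangle\, A_n)\to 0$ gives $\sup_{g \in Q}\|g\cdot \xi_n - \xi_n\| \to 0$ for every compact $Q \subset G$. Hence the unitary Koopman representation of $G$ on $L^2_0(X,m)$ has almost invariant vectors. By property $(T)$ it must then contain a non-zero invariant vector; but a $G$-invariant element of $L^2_0(X,m)$ is a non-constant invariant function, whose level sets are $G$-invariant sets of intermediate measure, contradicting ergodicity. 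Therefore no properly ergodic action of $G$ can be weakly amenable.

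The main obstacle, and the only genuinely non-formal step, is the construction of the asymptotically invariant sequence. Two points need care. First, non-atomicity — equivalently the failure of essential transitivity — must be used decisively, since for a transitive action (where $\mathcal{R}$ has finite classes on an atomic space) no such sequence exists and the conclusion is simply false. Second, the orbit relation of a locally compact group need not be countable, so Connes--Feldman--Weiss does not apply verbatim; one should either invoke amenability of the measured groupoid $G \ltimes X$ directly, or first reduce to a countable relation on a Borel cross-section of the orbits and then transfer the asymptotically invariant sets back to $X$, where the uniformity over compact subsets of $G$ is recovered. Once a non-trivial asymptotically invariant sequence is in hand, the passage through property $(T)$ is entirely soft.
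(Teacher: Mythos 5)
Your proof is correct in substance, but it takes a genuinely different route from the paper's. The paper deduces the proposition from Zimmer's cocycle-reduction theorem: for a second countable locally compact group with property $(T)$, every cocycle of an ergodic p.m.p.\ action into an amenable group is cohomologous to a cocycle into a compact subgroup. Since weak amenability of a properly ergodic p.m.p.\ action would produce a non-trivial such cocycle (through hyperfiniteness, the action would be orbit equivalent to an action of an amenable group, and the resulting orbit-equivalence cocycle cannot be reduced to a compact subgroup unless the action is essentially transitive), the proposition ``follows quite readily,'' as the paper puts it, from the absence of such cocycles. You instead run the Connes--Weiss mechanism: amenability of the orbit relation gives hyperfiniteness via Connes--Feldman--Weiss, hyperfiniteness plus non-atomicity gives a non-trivial asymptotically invariant sequence (failure of strong ergodicity), and then property $(T)$ applied to the Koopman representation on $L^2_0(X,m)$ yields a non-constant invariant function, contradicting ergodicity. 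The two arguments share their front end --- amenability of the relation, CFW, and the reduction of the uncountable orbit relation of a locally compact group to a countable cross-section relation (Kechris' countable sections exist for non-free actions, so your second suggested fix is the right one) --- and differ in how $(T)$ enters: your way uses only the definition of $(T)$ and the Koopman representation, at the price of the asymptotic-invariance bookkeeping; the paper's way outsources the work to Zimmer's cocycle rigidity, which is formulated directly for locally compact groups and p.m.p.\ actions and makes the endgame immediate.

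One step you assert but do not justify deserves attention: the CFW/Rokhlin construction produces sets with $m(gA_n \,\triangle\, A_n) \to 0$ for each \emph{fixed} $g \in G$ (each $g$ defines an element of the full group of the relation), not uniformly on compact subsets of $G$. Upgrading pointwise to uniform-on-compacts almost invariance of a sequence is a real step, though a standard one for $\sigma$-compact groups: the functions $\varphi_n(g) = m(gA_n \,\triangle\, A_n)$ are continuous, symmetric and subadditive, so a Baire category argument applied to the closed sets $F_N = \{ g \in G : \varphi_n(g) \le \varepsilon \text{ for all } n \ge N \}$ gives smallness of $\varphi_n$ on a neighbourhood of the identity for all large $n$, and finitely many translates of that neighbourhood then cover any compact set. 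With this supplied (or quoted from the strong-ergodicity literature), your argument is complete.
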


The Nevo--Zimmer intermediate factor theorem is then  used to show that a non-weakly amenable action is essentially free. On the other hand, the contrapositive of weak amenability follows quite readily from the fact that there are no non-trivial cocycles into   amenable groups associated to  probability measure preserving Borel actions of $G$.

To summarize,  it is currently unknown if  groups like $\SL_(\BR) \times \SL_2(\BR)$ or $\SL_2(\BQ_p) \times \SL_2(\BQ_p)$ admit any non-trivial irreducible invariant random subgroups not coming from  lattices.

We would like to point out that rank one semisimple linear groups,   discrete hyperbolic or relatively hyperbolic groups as well as  mapping class groups and $\mathrm{Out}(F_n)$ have a large supply of exotic invariant random subgroups \cite{Bo1, DGO, BGK}.

%%%%%%%%%%%%%%%%%%%%%

\section{The Benjamini--Schramm topology}

Let $\mathcal{M}$ be the space of all (isometry classes of) pointed proper metric spaces equipped with the Gromov--Hausdorff topology. This is a huge space and for many applications it is enough to consider compact subspaces of it obtained by bounding the geometry. That is, let $f(\gep,r)$ be an integer valued function defined on $(0,1)\times\BR^{>0}$, and let $\mathcal{M}_f$ consist of those spaces for which $\forall \gep,r$, the $\gep$-entropy of the $r$-ball 
$B_X(r,p)$ around the special point is bounded by $f(\gep,r)$, i.e. no $f(\gep,r)+1$ points in $B_X(r,p)$ form an $\gep$-discrete set. Then $\mathcal{M}_f$ is a compact subspace of $\mathcal{M}$.   

In many situations one prefers to consider some variants of $\mathcal{M}$ which carry more information about the spaces. 
For instance when considering graphs, it may be useful to add colors and orientations to the edges. The Gromov--Hausdorff distance defined on these objects should take into account the coloring and orientation.
%For instance when considering graphs, it may be useful to add colors and directions to the edges, and the distance between rooted coloured graphs remembers the coloring. 
Another example is smooth Riemannian manifolds, in which case it is better to consider framed manifolds, i.e. manifold with a chosen point and a chosen frame at the tangent space at that point. In that case, one replace the Gromov--Hausdorff topology by the ones determined by $(\gep,r)$ relations (see \cite[Section 3]{7S} for details), which remembers also the directions from the special point.

We define the {\it Benjamini--Schramm} space $\mathcal{BS}=\text{Prob}(\mathcal{M})$ to be the space of all Borel probability measures on $\mathcal{M}$ equipped with the weak-$*$ topology. Given $f$ as above, we set $\mathcal{BS}_f:=\text{Prob}(\mathcal{M}_f)$. Note that $\mathcal{BS}_f$ is compact.

The name of the space is chosen to hint that this is the same topology induced by `local convergence', considered by Benjamini and Schramm in \cite{BS}, when restricting to measures on rooted graphs. Recall that a sequence of random rooted bounded degree graphs converges to a limiting distribution iff for every $n$ the statistics of the $n$ ball around the root (i.e. the probability vector corresponding to the finitely many possibilities for $n$-balls) converges to the limit. 

The case of general proper metric spaces can be described similarly. A sequence $\mu_n\in\mathcal{BS}_f$ converges to a limit $\mu$ iff for any compact pointed `test-space' $M\in\mathcal{M}$, any $r$ and some arbitrarily small\footnote{This doesn't mean that it happens for all $\gep$.} $\gep>0$, the $\mu_n$ probability that the $r$ ball around the special point is `$\gep$-close' to $M$ tends to the $\mu$-probability of the same event.

\begin{exam}
An example of a point in $\mathcal{BS}$ is a measured metric space, i.e. a metric space with a Borel probability measure. 
A particular case is a finite volume Riemannian manifold --- in which case we scale the Riemannian measure to be one, and then randomly choose a point and a frame.
\end{exam}

Thus a finite volume locally symmetric space $M=\gC\backslash G/K$ produces both a point in the Benjamini--Schramm space and an IRS in $G$. This is a special case of a general analogy that I'll now describe. Given a symmetric space $X$, let us denote by $\mathcal{M}(X)$ the space of all pointed (or framed) complete Riemannian orbifolds whose universal cover is $X$, and by
$\mathcal{BS}(X)=\text{Prob}(\mathcal{M}(X))$ the corresponding subspace of the Benjamini--Schramm space. 

Let $G$ be a non-compact simple Lie group with maximal compact subgroup $K\le G$ and an associated Riemannian symmetric space $X=G/K$. There is a natural map 
$$
 \{\text{discrete subgroups of }~G\}\to \mathcal{M}(X),~\gC\mapsto \gC\backslash X.
$$  
It can be shown that this map is continuous, hence inducing a continuous map
$$
 \text{DIRS}(G)\to \mathcal{BS}(X).
$$
It can be shown that the later map is one to one, and since $\text{DIRS}(G)$ is compact, it is
a homeomorphism to its image (see \cite[Corollary 3.4]{7S}). \footnote{It was Miklos Abert who pointed out to me, about eight years ago, the analogy between Benjamini--Schramm convergence (at that time `local convergence') and convergence of Invariant Random Subgroups,
which later played an important rule in the work \cite{7S} and earlier in the work of Abert-Glasner-Virag \cite{AGV1,AGV2}.
}
%Thus the IRS topology coincides with the BS-topology if we identify $\irs_d(G)$ with its image.

\begin{rem}[Invariance under the geodesic flow]
Given a tangent vector $\overline{v}$ at the origin (the point corresponding to $K$) of $X=G/K$, define a map 
$\mathcal{F}_{\overline{v}}$ from $\mathcal{M}(X)$ to itself by moving the special point using the exponent of 
$\overline{v}$ and applying parallel transport to the frame. This induces a homeomorphism of $\mathcal{BS}(X)$. The image of $\text{DIRS}(G)$ under the map above is exactly the set of $\mu\in \mathcal{BS}(X)$ which are invariant under $\mathcal{F}_{\overline{v}}$ for all $\overline{v}\in T_K(G/K)$.
\end{rem}

Thus we can view geodesic-flow invariant probability measures on framed locally $X$-manifolds as IRS on $G$ and vice versa, and the Benjamini--Schramm topology on the first coincides with the IRS-topology on the second.

\begin{rem}
The analogy above can be generalised, to some extent, to the context of general locally compact groups. Given a locally compact group $G$, fixing a right invariant metric on $G$, we obtain a map $\text{Sub}_G\to\mathcal{M},~H\mapsto G/H$, where the metric on $G/H$ is the induced one. Moreover, this map is continuous hence defines a continuous map
$\text{IRS}(G)\to \mathcal{BS}$.
\end{rem}

For the sake of simplicity let us forget `the frame' and consider pointed $X$-manifolds, and $\mathcal{BS}(X)$ as probability measures on such. We note that while for general Riemannian manifolds there is a benefit for working with framed manifolds, for locally symmetric spaces of non-compact type, pointed manifolds, and measures on such, behave nicely enough.
 
In order to examine convergence in $\mathcal{BS}(X)$ it is enough to use as `test-space' balls in locally $X$-manifolds. Moreover, since $X$ is non-positively curved, a ball in an $X$-manifold is isometric to a ball in $X$ iff it is contractible. 

Note that since $X$ is a homogeneous space, all choices of a probability measure on $X$ correspond to the same point in $\mathcal{BS}(X)$. Abusing notations, we shall denote this point by $X$.

\begin{defn}
Let us say that a sequence in $\mathcal{BS}(X)$ is {\it Farber} if it converges to $X$. 
\end{defn}

For an $X$-manifold $M$ and $r>0$, we denote by $M_{\ge r}$ the $r$-thick part in $M$:
$$
 M_{\ge r}:=\{x\in M:\text{InjRad}_M(x)\ge r\},
$$
where $\text{InjRad}_M(x)=\sup\{\gep:B_M(x,\gep)~\text{is contractible}\}$. 

\begin{prop}\cite[Corollary 3.8]{7S}
A sequence $M_n$ of finite volume $X$-manifolds is Farber iff
$$
 \frac{\vol((M_n)_{\ge r})}{\vol(M_n)}\to 1,
$$
for every $r>0$.
\end{prop}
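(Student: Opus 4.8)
\emph{Strategy.} The plan is to transport the statement to the side of invariant random subgroups, where the relevant events become the clean Chabauty sets appearing in the proof of Theorem~\ref{thm:wud}. Write $M_n=\gC_n\backslash X$ with $\gC_n\le G$ a torsion-free lattice, and let $\mu_{\gC_n}\in\text{DIRS}(G)$ be the associated IRS. Under the homeomorphism $\text{DIRS}(G)\to\mathcal{BS}(X)$ of \cite[Corollary~3.4]{7S}, the point $\mu_{\gC_n}$ is sent precisely to the measure $M_n\in\mathcal{BS}(X)$, while the trivial IRS $\gd_{\{1\}}$ is sent to the homogeneous point $X$ (as $\{1\}\backslash X=X$). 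Since this map is a homeomorphism onto its (compact, hence closed) image and all of $M_n$, $X$ lie in that image, the sequence $M_n$ is Farber, i.e. $M_n\to X$, if and only if $\mu_{\gC_n}\to\gd_{\{1\}}$ in $\text{DIRS}(G)$. It therefore suffices to characterise convergence to the trivial IRS by the thick-part volume ratios.

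\emph{The geometric dictionary.} For $r>0$ set $\Omega_r:=\{g\in G: d(gK,K)<2r\}$, an identity neighbourhood, and $K_r:=\{H\in\sub(G): H\cap\Omega_r=\{1\}\}$, a set of the same type as the $K_n$ of Lemma~\ref{K_n-open}. The key identity I would establish is
$$ \mu_{\gC}(K_r)=\frac{\vol(M_{\ge r})}{\vol(M)} \qquad (M=\gC\backslash X). $$
Indeed, since $X$ is non-positively curved, $\text{InjRad}_M(\gC hK)$ equals half the length of the shortest geodesic loop at $hK$, that is $\tfrac12\min_{\gc\in\gC\setminus\{1\}}d(hK,\gc hK)$; hence $\text{InjRad}_M(\gC hK)\ge r$ precisely when $h^{-1}\gc h\notin\Omega_r$ for all $\gc\in\gC\setminus\{1\}$, i.e. when the conjugate $h^{-1}\gC h$ lies in $K_r$. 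Pushing the normalized Haar measure of $G/\gC$ to $\sub(G)$ by $g\gC\mapsto g\gC g^{-1}$ and comparing with the projection of Haar to $M=\gC\backslash G/K$ (using that $G$ is unimodular, so $g\mapsto g^{-1}$ is measure preserving) turns this pointwise equivalence into the displayed identity, up to the measure-zero locus $\{\text{InjRad}=r\}$ which is irrelevant in the limit.

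\emph{Topology of the sets $K_r$.} Next I would show that $\{K_r\}_{r>0}$ is a neighbourhood basis of the trivial subgroup $\{1\}$ in $\sub(G)$. Each $K_r$ is open and contains $\{1\}$: openness follows exactly as in Lemma~\ref{K_n-open} from the NSS property of $G$. As $r\to\infty$ the neighbourhoods $\Omega_r$ exhaust $G$, so $\bigcap_{r}K_r=\{\{1\}\}$. Finally, given any open $W\ni\{1\}$, if no $K_r$ were contained in $W$ we could pick $H_r\in K_r\setminus W$; a subsequential Chabauty limit $H_*$ of the $H_r$ would, by the exhaustion together with $H_r\cap\Omega_r=\{1\}$, contain no nontrivial element, whence $H_*=\{1\}\in W$, contradicting $H_r\notin W$. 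Thus $K_r\subset W$ for $r$ large, by compactness of $\sub(G)$.

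\emph{Conclusion.} It remains to record the elementary fact that $\mu_n\to\gd_{\{1\}}$ if and only if $\mu_n(W)\to1$ for every neighbourhood $W$ of $\{1\}$. The forward direction is the portmanteau inequality $\liminf\mu_n(W)\ge\gd_{\{1\}}(W)=1$ for open $W$. For the converse, given $f\in C(\sub(G))$ and $\gep>0$, choose $W\ni\{1\}$ open with $|f-f(\{1\})|<\gep$ on $W$; then $\mu_n(W^c)\to0$ gives $|\int f\,d\mu_n-f(\{1\})|\le\gep+2\|f\|_\infty\,\mu_n(W^c)\to\gep$. By the previous paragraph this is equivalent to $\mu_n(K_r)\to1$ for every $r>0$, and by the dictionary identity this is exactly $\vol((M_n)_{\ge r})/\vol(M_n)\to1$ for every $r>0$. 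Combined with the reduction of the first paragraph, this proves the proposition. The main obstacle is the geometric dictionary of the second paragraph: pinning down the precise correspondence between the injectivity radius and the discreteness neighbourhoods $\Omega_r$ and matching the two normalizations of volume; the non-positive curvature rigidity (a contractible ball in an $X$-manifold is isometric to a ball in $X$) is the essential geometric input here, and it is what confines the whole discussion to the exact, rather than approximate, events $K_r$.
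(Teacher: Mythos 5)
Your strategy is in substance the right one, and in fact the intended one: this survey does not prove the proposition (it is quoted from \cite[Corollary 3.8]{7S}), and the proof in that source runs exactly through the dictionary you set up --- the homeomorphism $\text{DIRS}(G)\to\mathcal{BS}(X)$ of \cite[Corollary 3.4]{7S}, the identification of $\mu_\gC$-probabilities of discreteness events with normalized thick-part volumes via the injectivity-radius/displacement correspondence in non-positive curvature, and the elementary characterisation of weak-$*$ convergence to $\gd_{\{1\}}$. Your first, second and fourth paragraphs are correct as written.

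There is, however, one genuine error in the third paragraph: the claim that each $K_r=\{H\in\sub(G):H\cap\Omega_r=\{1\}\}$ is open ``exactly as in Lemma \ref{K_n-open}''. That lemma concerns sets defined by a \emph{compact} neighbourhood $U_n$, whereas your $\Omega_r=\{g\in G: d(gK,K)<2r\}$ is open; in fact your $K_r$ is \emph{closed}, since its complement $\{H: H\cap(\Omega_r\setminus\{1\})\ne\emptyset\}$ is Chabauty-open ($\Omega_r\setminus\{1\}$ being open), and it is genuinely not open: take $H$ cyclic, generated by a hyperbolic element whose axis passes through the base point and whose translation length is exactly $2r$; then $H\in K_r$, but shrinking the translation length slightly gives subgroups $H_j\to H$ with $H_j\notin K_r$. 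Consequently the portmanteau inequality $\liminf\mu_n(K_r)\ge\gd_{\{1\}}(K_r)$ in your conclusion cannot be invoked for $K_r$ as written. The gap is repairable without any new idea, and you should record the repair: either define $\Omega_r$ by the closed condition $d(gK,K)\le 2r$, which is compact by properness of the $G$-action on $X$, so that Lemma \ref{K_n-open} genuinely applies; or keep your $\Omega_r$ and sandwich: with $V$ as in that lemma, the set $O_r:=\{H: H\cap(\overline{\Omega_r}\setminus V)=\emptyset\}$ is open, contains $\{1\}$, and satisfies $K_{r'}\subset O_r\subset K_r$ for every $r'>r$. Since the proposition quantifies over all $r>0$ and $r\mapsto\mu(K_r)$ is monotone, the families $\{K_r\}$ and $\{O_r\}$ are interchangeable in the limit, and both directions of your argument then go through; note that your neighbourhood-basis argument and the converse implication never used openness of $K_r$ in the first place, so only the forward (portmanteau) direction needs this adjustment.
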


Theorem \ref{thm:7main} can be reformulated as:

\begin{thm}
Let $X$ be an irreducible Riemannian symmetric space of non compact type of rank at least $2$. For any $r$ and $\gep$ there is $V$ such that if $M$ is an $X$-manifold of volume $v\ge V$ then $\frac{\vol(M_{\ge r})}{v}\ge 1-\gep$ (see Figure \ref{fig:whale}).
\end{thm}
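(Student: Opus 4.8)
The plan is to deduce this uniform, quantitative assertion from the sequential Farber statement of Theorem~\ref{thm:7main} by a compactness-style contradiction argument, transporting everything through the dictionary between $\mathcal{BS}(X)$ and $\text{DIRS}(G)$. Throughout, let $G=\text{Isom}(X)^\circ$; since $X$ is irreducible of non-compact type, $G$ is a simple Lie group with $\rk_\R(G)=\rk(X)\ge 2$, so Theorem~\ref{thm:7main} applies. Each finite-volume $X$-manifold $M$ is of the form $\gC\backslash X$ for a torsion-free lattice $\gC\le G$, with $\vol(M)$ proportional to $\covol(\gC)$.

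First I would negate the conclusion. Fixing $r$ and $\gep$ for which no $V$ works and taking $V=n$, I obtain a sequence of $X$-manifolds $M_n=\gC_n\backslash X$ with $\vol(M_n)\ge n$ and $\vol((M_n)_{\ge r})/\vol(M_n)<1-\gep$ for every $n$. In particular $\vol(M_n)\to\infty$, so after passing to a subsequence I may assume the volumes are strictly increasing. Then the lattices $\gC_n$ have pairwise distinct covolumes and are therefore pairwise non-conjugate in $G$.

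Next I would feed this into Theorem~\ref{thm:7main}: the $\gC_n$ being pairwise non-conjugate lattices, the associated invariant random subgroups $\mu_{\gC_n}$ form a Farber sequence, i.e. $\mu_{\gC_n}\to\gd_{\{1\}}$ in $\text{DIRS}(G)$. Applying the continuous map $\text{DIRS}(G)\to\mathcal{BS}(X)$ of \cite[Corollary 3.4]{7S}, under which the trivial subgroup maps to $\{1\}\backslash X=X$ and hence $\gd_{\{1\}}$ corresponds to the homogeneous point $X$, I conclude that $M_n\to X$ in $\mathcal{BS}(X)$; that is, the sequence $M_n$ is Farber. The characterization of Farber sequences (\cite[Corollary 3.8]{7S}) then yields $\vol((M_n)_{\ge r})/\vol(M_n)\to 1$ for every $r>0$, contradicting the defining inequality of the $M_n$ for our fixed $r$ and $\gep$.

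The genuine mathematical content sits entirely in Theorem~\ref{thm:7main} (and the Stuck--Zimmer-type input behind it), which I am permitted to assume; the present argument is merely the bridge from a sequential to a uniform statement, and its only delicate points are bookkeeping. The step to watch is the passage to pairwise non-conjugate lattices, since this is what licenses the appeal to Theorem~\ref{thm:7main}; it relies precisely on $\vol(M_n)\to\infty$ forcing, along a subsequence, distinct covolumes and hence non-conjugacy. The other point requiring care is the faithful translation between the two notions of \emph{Farber} --- weak-$*$ convergence $\mu_{\gC_n}\to\gd_{\{1\}}$ of invariant random subgroups versus $\mathcal{BS}(X)$-convergence of the thick-part statistics $M_n\to X$ --- but this is exactly the content packaged in \cite[Corollaries 3.4 and 3.8]{7S}, so no new estimate is needed.
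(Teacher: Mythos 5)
Your proof is correct and is precisely the argument the paper intends: the theorem is presented there as a reformulation of Theorem~\ref{thm:7main}, and your negation/subsequence-extraction bridge, using distinct covolumes to get pairwise non-conjugate lattices and then translating through the IRS--Benjamini--Schramm dictionary (\cite[Corollaries 3.4 and 3.8]{7S}), is exactly how that reformulation is meant to be justified. The only point glossed over (equally so in the paper) is that the deck group of an $X$-manifold a priori lies in $\mathrm{Isom}(X)$ rather than in its identity component $G$, a standard technicality that does not affect the argument.
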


\begin{figure}[h]
    \centering
    \includegraphics[width=0.8\textwidth]{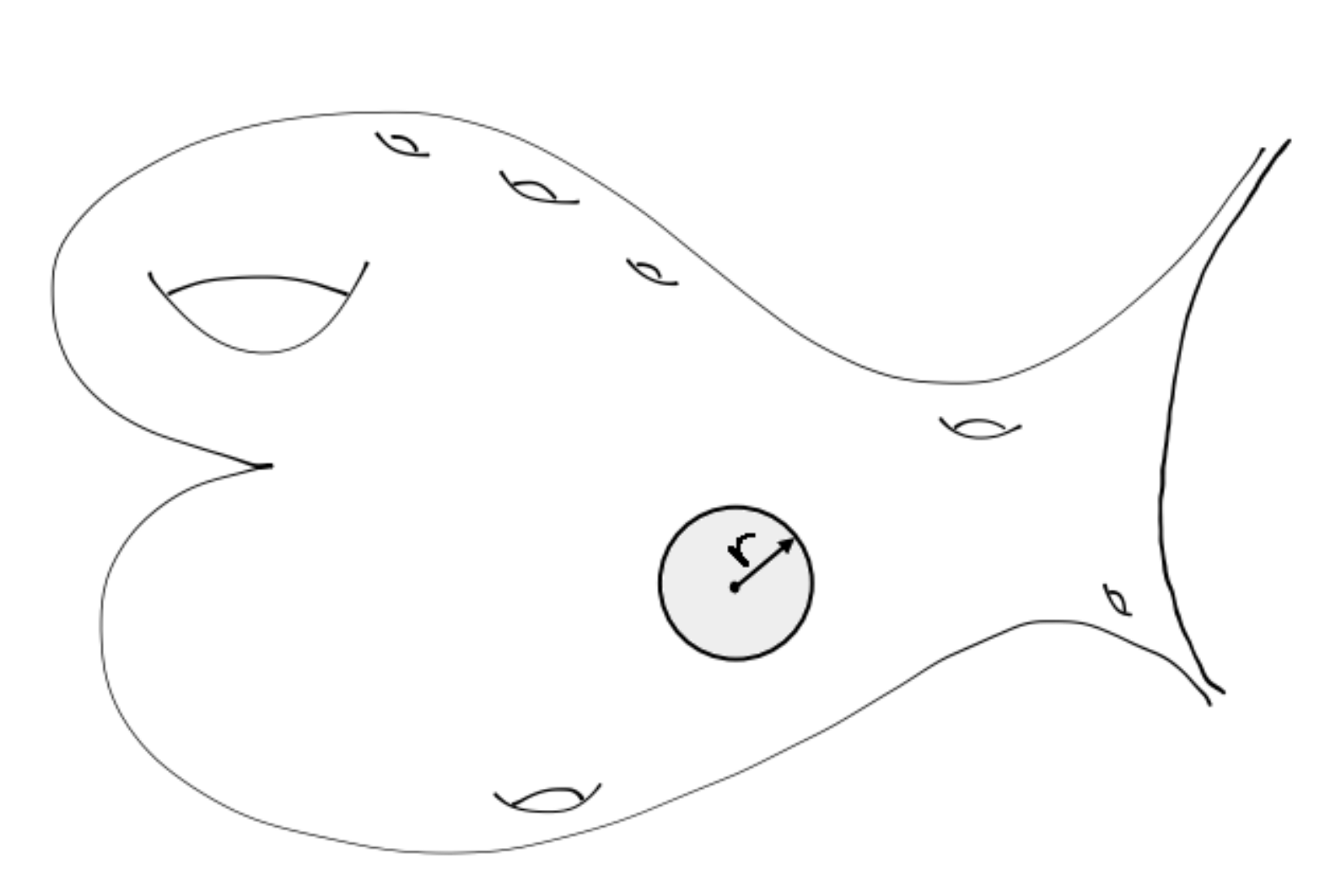}
    \caption{A large volume manifold is almost everywhere fat.}
    \label{fig:whale}
\end{figure}

%\begin{rem}
%I was introduced to the notion of Benjamini--Schramm convergence, at that time `local convergence', around 2010 by Miklos Abert. Miklos also explained to me the analogy with Invariant Random Subgroups which later played a central rule in the work \cite{7S} and earlier in the work of Abert-Glasner-Virag \cite{AGV1,AGV2}.
%\end{rem}

%%%%%%%%%%%%%%%%%%%%%%%%%%%%%%%%%

\section{Applications to $L_2$-invariants}

%One of the major achievements of \cite{7S} is a convergence formula for relative Plancherel measures with respect to a general uniformly discrete sequence of lattices. 
%To be precise, l
Let $\Gamma$ be a uniform lattice in $G$. The right quasi-regular representation $\rho_\Gamma$ of $G$ in $L^2(\Gamma \backslash G,\mu_G)$  decomposes as a direct sum of irreducible representations.
% \cite[\S 1.2.3]{gelfand1969representation}. 
Every  irreducible unitary representation $\pi$ of $G$  appears in $\rho_\Gamma$ with  finite multiplicity $m(\pi,\Gamma)$. 

\begin{definition}
\label{def:relative plancherel}
The \emph{normalized relative Plancherel measure} of $G$ with respect to $\Gamma$ is an atomic measure on the unitary dual $\widehat{G}$  given by
	$$
	\nu_\Gamma = \frac{1}{\mathrm{vol}(\Gamma \backslash G)} \sum_{\pi \in \widehat{G}} m(\pi, \Gamma) \delta_\pi.
	$$
\end{definition}

%It turns out that the following deep representation-theoretic statement follows from the geometric asymptotic property of the kind established in Theorem \ref{thm:accumulation points of invariant random subgroups}.

%of being \emph{weakly trivial}\footnote{A sequence of lattices in a locally compact group is \emph{weakly trivial} if it satisfies the obvious analogue of Definition \ref{def:weakly central} with $Z(G)$ replaced by $\{e\}$. See Definition \ref{def: a Farber sequence of invariant random subgroups} below}.

%To be precise,  
 
%In Theorem \ref{thm:BS convergence implies plancherel convergence} and Corollaries \ref{cor:BS convergence implies ptwise plancherel convergence}
% and \ref{cor:Plancharel convergence for lattices in higher rank T} we assume that   $G$ is  a non-Archimedean semisimple analytic group over a local field of zero characteristic.

The following result, extending earlier works of DeGeorge--Wallach \cite{DW}, Delorme \cite{Del} and many others, was proved in \cite{7S} for real Lie groups and then generalized to non-archimedean groups in \cite{non-archimedean}:

\begin{theorem}
%\marginpar{maybe write this theorem for general zero characterisitc (not just non-Archimedean)}
	\label{thm:BS convergence implies plancherel convergence}
Let $G$ be a  semisimple analytic group in zero characteristic. Fix a Haar measure on $G$ and let $\nu_G$ be the associated Plancherel measure on $\widehat{G}$.
Let $\Gamma_n$ be a uniformly discrete  sequence of lattices in $G$ with $\mu_{\Gamma_n}$ being weak-$*$ convergent to $\delta_{\{e\}}$.  Then 
	$$ \nu_{\Gamma_n}(E) \xrightarrow{n \to \infty} \nu_G(E) $$
	for every relatively quasi-compact  $\nu^G$-regular  subset $E \subset \widehat{G}$.
\end{theorem}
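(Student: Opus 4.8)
The plan is to reduce the statement to the convergence of spectral traces against test functions and then upgrade that to the convergence of measures of subsets. For the core reduction I treat first the case where the $\Gamma_n$ are uniform, so that $\rho_{\Gamma_n}$ decomposes discretely and $\rho_{\Gamma_n}(f)$ is trace class for $f\in C_c^\infty(G)$. For such $f$ the operator $\rho_{\Gamma_n}(f)$ acts on $L^2(\Gamma_n\backslash G)$ by the kernel $K_f(x,y)=\sum_{\gamma\in\Gamma_n}f(x^{-1}\gamma y)$, and integrating the kernel along the diagonal (the pre-trace formula) gives
$$
 \mathrm{tr}\,\rho_{\Gamma_n}(f)=\int_{\Gamma_n\backslash G}\sum_{\gamma\in\Gamma_n}f(x^{-1}\gamma x)\,dx .
$$
On the spectral side the left-hand side equals $\mathrm{vol}(\Gamma_n\backslash G)\int_{\widehat G}\mathrm{tr}\,\pi(f)\,d\nu_{\Gamma_n}(\pi)$ by the very definition of $\nu_{\Gamma_n}$. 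Dividing by $\mathrm{vol}(\Gamma_n\backslash G)$ and separating the contribution of $\gamma=e$, which is exactly $f(e)$, I would obtain
$$
 \int_{\widehat G}\mathrm{tr}\,\pi(f)\,d\nu_{\Gamma_n}(\pi)=f(e)+\int_{\sub(G)}F_f(H)\,d\mu_{\Gamma_n}(H),
 \qquad F_f(H):=\sum_{e\ne h\in H}f(h),
$$
the point being that the averaged geometric remainder is, by construction, the $\mu_{\Gamma_n}$-integral of the counting functional $F_f$ (this is precisely the push-forward $x\mapsto x^{-1}\Gamma_n x$ defining $\mu_{\Gamma_n}$).

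Next I would show that the remainder vanishes, and here the two hypotheses enter. Let $U$ be an identity neighbourhood witnessing the uniform discreteness of the family, so that $H\cap U=\{e\}$ for $\mu_{\Gamma_n}$-almost every $H$ and all $n$. Splitting $f=f_0+f_1$ with $f_0$ supported in $U$ and $f_1$ supported in the complement of a smaller neighbourhood of $e$, the $f_0$-part of $F_f$ vanishes $\mu_{\Gamma_n}$-almost surely, since no non-trivial element of such $H$ lies in $U$; only $F_{f_1}$ survives. On the compact set $\{H:H\cap U=\{e\}\}$, which supports every $\mu_{\Gamma_n}$ and also $\delta_{\{e\}}$, the functional $F_{f_1}$ is bounded and Chabauty-continuous: uniform discreteness together with the compact support of $f_1$ bounds the number of points of $H$ in $\mathrm{supp}(f_1)$ and prevents them from colliding or escaping to infinity, so the finite weighted count $H\mapsto\sum_{h\in H}f_1(h)$ varies continuously. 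Extending $F_{f_1}$ to a bounded continuous function on $\sub(G)$ and using $\mu_{\Gamma_n}\to\delta_{\{e\}}$ in the weak-$*$ topology, I get $\int F_{f_1}\,d\mu_{\Gamma_n}\to F_{f_1}(\{e\})=0$. Combined with the Plancherel formula $f(e)=\int_{\widehat G}\mathrm{tr}\,\pi(f)\,d\nu_G$, this establishes $\int_{\widehat G}\mathrm{tr}\,\pi(f)\,d\nu_{\Gamma_n}\to\int_{\widehat G}\mathrm{tr}\,\pi(f)\,d\nu_G$ for every $f\in C_c^\infty(G)$.

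It remains to pass from spectral traces to the measure of a set $E$, and this is the step I expect to be the main obstacle. I would invoke Sauvageot's density principle: for a relatively quasi-compact, $\nu_G$-regular $E$ and any $\eta>0$ one can produce $f_\pm\in C_c^\infty(G)$ with $\mathrm{tr}\,\pi(f_-)\le\mathbf 1_E\le\mathrm{tr}\,\pi(f_+)$ pointwise on $\widehat G$ and $\int(\mathrm{tr}\,\pi(f_+)-\mathrm{tr}\,\pi(f_-))\,d\nu_G<\eta$, the existence of such sandwiching trace functions being exactly where $\nu_G$-regularity (null boundary) and relative quasi-compactness (staying in a controlled part of $\widehat G$) are used. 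Since these inequalities hold against every measure, the already-proven convergence against $f_\pm$ yields $\limsup_n\nu_{\Gamma_n}(E)\le\nu_G(E)+\eta$ and $\liminf_n\nu_{\Gamma_n}(E)\ge\nu_G(E)-\eta$, and letting $\eta\to0$ gives the claim. The hardest part is genuinely this density principle, which matches indicators of $\nu_G$-regular sets with the cone $\{\mathrm{tr}\,\pi(f):f\in C_c^\infty(G)\}$ and must be arranged uniformly across the archimedean and non-archimedean factors. A secondary obstacle, should one wish to drop the uniformity of the $\Gamma_n$, is that the pre-trace formula of the first step must be replaced by a truncated Arthur trace formula, and one must then verify that the additional continuous-spectrum and parabolic contributions also become negligible under Benjamini--Schramm convergence.
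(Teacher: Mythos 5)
The paper itself does not prove this theorem --- it only cites \cite{7S} and \cite{non-archimedean} --- but your argument is essentially the proof given in those sources: the pre-trace formula for $\rho_{\Gamma_n}(f)$, rewriting the normalized geometric remainder as $\int F_f\,d\mu_{\Gamma_n}$ where $F_f$ is a counting functional that is bounded and Chabauty-continuous on the compact set of $U$-uniformly discrete subgroups (so that Farber convergence $\mu_{\Gamma_n}\to\delta_{\{1\}}$ kills it), and then Sauvageot's density principle to upgrade convergence against trace test functions $\mathrm{tr}\,\pi(f)$, $f\in C_c^\infty(G)$, to convergence of measures of relatively quasi-compact $\nu_G$-regular sets. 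Your closing hedge about non-uniform lattices and Arthur's trace formula is unnecessary for the statement as given: uniform discreteness forces each $\Gamma_n$ to be cocompact (finite covolume together with an injectivity radius bounded below rules out non-compact quotients, and in characteristic zero non-uniform lattices contain unipotents whose conjugates approach the identity), so your ``uniform case'' already covers the full hypothesis.
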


%The proof of Theorem \ref{thm:BS convergence implies plancherel convergence} is essentially the same as \cite[1.2,6.7]{7S}. It relies on the Plancherel formula and the Sauvageot principle

One of the consequence of Theorem \ref{thm:BS convergence implies plancherel convergence} is the convergence of normalized Betti numbers (cf. \cite{7-note}). Recently we were able to get rid of the co-compactness and the uniform discreteness assumptions and proved the following general version, making use of the Bowen--Elek  \cite[\S 4]{Bo3} simplicial approximation technique:

\begin{thm}\cite{ABBG}\label{ABBG}
Let $X$ be a symmetric space of non compact type of
$dim(X)\neq 3$ and $(M_n)$ is a weakly convergent sequence of finite volume $X$-manifolds.  Then for all $k$, the normalized Betti numbers $b_k(M_n)/\vol(M_n)$  converge.
\end{thm}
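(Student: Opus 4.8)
The plan is to identify the limiting value of the normalized Betti numbers with an $L^2$-Betti number attached to the limiting invariant random subgroup, and then to prove convergence by squeezing this quantity between a $\limsup$ and a $\liminf$. Under the homeomorphism $\text{DIRS}(G)\hookrightarrow\mathcal{BS}(X)$ recalled above, each $M_n=\Gamma_n\backslash X$ corresponds to an IRS $\mu_n$, and by hypothesis $\mu_n$ converges weak-$*$ to some $\mu\in\mathcal{BS}(X)$. To $\mu$ one associates a unimodular random pointed $X$-manifold, equivalently a measured groupoid with a canonical trace $\tau$, and one defines $\beta_k^{(2)}(\mu)$ as the von Neumann dimension of the space of $L^2$-harmonic $k$-forms of the associated laminated space; concretely $\beta_k^{(2)}(\mu)=\lim_{t\to\infty}\tau(e^{-t\Delta_k})=\lim_{t\to\infty}\int K_t^{(2)}(x,x)\,d\mu(x)$, where $K_t^{(2)}$ is the leafwise heat kernel on $k$-forms. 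The theorem then amounts to the statement $b_k(M_n)/\vol(M_n)\to\beta_k^{(2)}(\mu)$.

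For the upper bound I would use Hodge theory together with the monotonicity of the heat trace. On a closed piece one has $b_k\le\mathrm{Tr}(e^{-t\Delta_k})=\int K_t(x,x)\,dx$ for every $t$, the right-hand side decreasing to $b_k$ as $t\to\infty$. Since for fixed $t$ the integrand is an essentially local quantity controlled by uniform Gaussian off-diagonal estimates, Benjamini--Schramm convergence gives $\frac{1}{\vol(M_n)}\int_{M_n}K_t(x,x)\,dx\to\int K_t^{(2)}(x,x)\,d\mu$; letting $t\to\infty$ yields $\limsup_n b_k(M_n)/\vol(M_n)\le\beta_k^{(2)}(\mu)$. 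The reverse inequality is the genuine difficulty: one must prevent an escape of spectrum to zero, i.e.\ rule out small nonzero Laplace eigenvalues absorbing $L^2$-Betti mass in the limit. In the cocompact, uniformly discrete setting of \cite{7S} this was secured by a uniform trace estimate; to remove both hypotheses I would pass to a combinatorial model through the Bowen--Elek simplicial approximation technique \cite{Bo3}, triangulating the $M_n$ with uniformly bounded local complexity and compatibly with BS-convergence, so that the normalized simplicial spectral measures still converge. On this side L\"uck's Fuglede--Kadison determinant bound supplies the missing uniform control on the density of eigenvalues near $0$, giving lower semicontinuity of the normalized Betti numbers and hence the matching bound.

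The main obstacle --- and the reason for the hypothesis $\dim(X)\ne3$ --- is the control of the thin parts produced by dropping uniform discreteness and cocompactness. A thick--thin decomposition writes $M_n$ as a thick part of bounded geometry glued to a thin part consisting of Margulis tubes and cusp neighbourhoods, and every estimate above is clean only on the thick part: on the thin part both the on-diagonal heat kernel (for the fixed-$t$ convergence) and the topological Betti numbers can be large. One therefore needs the contribution of the thin part to be $o(\vol(M_n))$, so that it affects neither the upper nor the lower bound. In dimension at least $4$ the thin components retract onto pieces of strictly smaller dimension whose Betti numbers admit volume-linear bounds with vanishing constant; in dimension $3$ the toroidal cusp and tube boundaries carry first homology that resists such a bound, and estimating $b_1$ of the thin part is precisely the step that breaks down, which is why that dimension is excluded.
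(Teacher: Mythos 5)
Your proposal cannot be checked against a proof in this paper, because the paper does not contain one: Theorem \ref{ABBG} is quoted from \cite{ABBG}, and the only indication given of its proof is that it uses the Bowen--Elek simplicial approximation technique \cite[\S 4]{Bo3} to remove the cocompactness and uniform discreteness assumptions of \cite{7S}. Measured against that, your skeleton is pointing in the right direction: passing from the manifolds to BS-convergent simplicial complexes of uniformly bounded local complexity, and then getting convergence of normalized Betti numbers on the combinatorial side via the integrality/determinant bound on eigenvalue density near $0$ (the atom at $0$ of the spectral measure is exactly $b_k/\vol$ up to normalization, and the uniform $\log$-determinant bound upgrades weak convergence of spectral measures to convergence of that atom), is the correct mechanism, and it in fact yields \emph{both} inequalities at once.

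The genuine gaps are elsewhere. First, your analytic upper bound is circular: the inequality $b_k\le\mathrm{Tr}(e^{-t\Delta_k})$ and the locality argument are only valid for closed manifolds (or in the uniformly discrete cocompact setting of \cite{7S}); for finite-volume $M_n$ with cusps the heat operator need not be trace class, the spectrum is partly continuous, and topological Betti numbers are not computed by $L^2$-harmonic forms, so the very hypotheses you are trying to remove are being reassumed. This half of your argument should be discarded in favor of the combinatorial one. Second, the heart of the matter --- which you assert in one sentence --- is the control of the thin part. Your claim that for $\dim X\ge 4$ thin components ``retract onto pieces of strictly smaller dimension whose Betti numbers admit volume-linear bounds with vanishing constant'' is precisely the key lemma that needs proof, and as stated it is not even adequate in full generality: the theorem allows $\dim X=2$ and non-Farber limits, in which case the thin part's normalized volume does \emph{not} tend to $0$, so its homological contribution must be shown to \emph{converge} (compatibly with the thick part via Mayer--Vietoris), not merely to be negligible. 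Finally, you present the exclusion of dimension $3$ as a technical failure of an estimate, but the hypothesis is necessary: as the paper points out immediately after the statement, the conclusion is false for $X=\BH^3$, since Dehn fillings $M_n$ of the figure-8 knot complement $M$ are homology spheres converging geometrically (hence BS) to $M$, so interleaving gives $b_1(M_n)/\vol(M_n)\not\to b_1(M)/\vol(M)$. Any correct proof must therefore use $\dim X\neq 3$ in an essential, not merely convenient, way.
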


Here, the only three-dimensional  irreducible symmetric spaces of noncompact type are scales of $\BH^3$. In fact, the conclusion of Theorem \ref{ABBG} is  false when $X=\BH^3$. As an example, let $K \subset S^3$ be a knot such that the complement $M = S^3 \setminus K$ admits a hyperbolic metric, e.g.\ the figure-8 knot. Using meridian--longitude coordinates, let $M_n$ be obtained by Dehn filling $M$ with slope $(1,n)$;  then each $M_n$ is a homology $3$-sphere. The manifolds $M_n \to M$ geometrically, see \cite[Ch E.6]{BP92}, so the measures $\mu_{M_n}$ weakly converge to $\mu_M$ (c.f.\ \cite[Lemma 6.4]{BGS16}) and the volumes $\vol(M_n) \to \vol(M)$. However, $0=b_1(M_n) \not \to b_1(M) =1$, so the normalized Betti numbers of the sequence $M_1,M,M_2,M,\ldots$ do not converge.

\begin{corollary}\label{cor:betti}
	 Suppose that $(M_n)$  is a Farber sequence of finite volume $X$-manifolds. Then for all $k\in \BN$, we have
$b_k(M_n) / \vol(M_n) \to \beta_k^{(2)}(X).$
\end{corollary}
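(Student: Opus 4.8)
The plan is to obtain convergence for free from Theorem~\ref{ABBG} and then to \emph{identify} the limit. A Farber sequence converges to $X$ in the Benjamini--Schramm topology and is, in particular, weakly convergent; hence, as long as $\dim(X)\neq 3$, Theorem~\ref{ABBG} guarantees that $b_k(M_n)/\vol(M_n)$ converges to some number $L_k$. Everything then reduces to proving the single equality $L_k=\beta_k^{(2)}(X)$.

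My first observation would be that $L_k$ does not depend on the chosen Farber sequence. If two Farber sequences $(M_n)$ and $(M'_n)$ gave different limits, then their interleaving $M_1,M'_1,M_2,M'_2,\ldots$ would still converge to $X$ (both halves do), while its normalized Betti numbers would oscillate between the two values---contradicting the convergence asserted by Theorem~\ref{ABBG} for this interleaved sequence of finite volume $X$-manifolds. Thus $L_k$ is an invariant of $X$ alone, and it suffices to evaluate it on one convenient Farber sequence. For that evaluation I would take a torsion-free cocompact arithmetic lattice $\Gamma_0\le\mathrm{Isom}(X)^{\circ}$ (it exists by Borel's theorem together with Selberg's lemma) admitting a descending chain $\Gamma_0\supseteq\Gamma_1\supseteq\cdots$ of finite-index normal subgroups with $\bigcap_i\Gamma_i=\{1\}$, and set $M_i=\Gamma_i\backslash X$. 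Such a residual tower is Farber. L\"uck's approximation theorem gives $b_k(M_i)/[\Gamma_0:\Gamma_i]\to\beta_k^{(2)}(\Gamma_0)$, and by the Cheeger--Gromov proportionality principle $\beta_k^{(2)}(\Gamma_0)=\vol(M_0)\,\beta_k^{(2)}(X)$; since $\vol(M_i)=[\Gamma_0:\Gamma_i]\,\vol(M_0)$, dividing yields $b_k(M_i)/\vol(M_i)\to\beta_k^{(2)}(X)$. Combined with the previous observation this proves $L_k=\beta_k^{(2)}(X)$ whenever $\dim(X)\neq 3$.

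The conceptual engine behind all of this is the heat-kernel formula $b_k(M)=\lim_{t\to\infty}\mathrm{Tr}\big(e^{-t\Delta_k^{M}}\big)$: locality of the heat kernel on $k$-forms together with Benjamini--Schramm convergence turns $\mathrm{Tr}(e^{-t\Delta_k^{M_n}})/\vol(M_n)$, for fixed $t$, into the constant on-diagonal heat trace $\Theta_k(t)$ of $X$, whose $t\to\infty$ limit is by definition $\beta_k^{(2)}(X)$. The main obstacle is the interchange of the two limits $n\to\infty$ and $t\to\infty$: the inequality $\mathrm{Tr}(e^{-t\Delta})\ge b_k$ gives the easy bound $\limsup_n b_k(M_n)/\vol(M_n)\le\Theta_k(t)\to\beta_k^{(2)}(X)$, but the matching lower bound requires controlling the spectrum of $\Delta_k^{M_n}$ near $0$ uniformly in $n$.

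It is precisely the absence of such uniform control that produces the dimension-three counterexample to the unrestricted statement, where the limiting measure is a non-trivial IRS rather than $X$; the Farber hypothesis is what rules this out. Consequently the genuinely three-dimensional case $X=\BH^3$---excluded from Theorem~\ref{ABBG}---must be handled on its own: here every $\beta_k^{(2)}(\BH^3)$ vanishes, and one shows directly, using the uniform control of small eigenvalues available for sequences that are Benjamini--Schramm convergent to $X$, that $b_k(M_n)/\vol(M_n)\to 0$ along any Farber sequence of hyperbolic $3$-manifolds, matching $\beta_k^{(2)}(\BH^3)=0$.
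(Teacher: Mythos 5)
Your argument in the case $\dim(X)\neq 3$ is correct, and it takes a genuinely different route from the paper. The paper obtains Corollary \ref{cor:betti} by direct appeal to the cited results: the identification of the limit along Farber sequences is part of the statements proved in \cite{ABBG} and \cite{7S}, so nothing is added beyond invoking them (and invoking \cite[Theorem 1.8]{7S} for $X=\BH^3$). You instead use Theorem \ref{ABBG} only at face value, as a bare convergence statement, and supply the identification of the limit yourself: the interleaving trick shows the limit $L_k$ is an invariant of $X$ alone, and you then evaluate it on one reference sequence --- a residual normal tower $\Gamma_0\supseteq\Gamma_1\supseteq\cdots$ in a torsion-free cocompact arithmetic lattice (Borel plus Selberg) --- via L\"uck approximation and Cheeger--Gromov proportionality. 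All the ingredients check out: the interleaved sequence is still Farber, hence weakly convergent; a normal chain with trivial intersection in a cocompact lattice has injectivity radius tending to infinity uniformly, so the tower is Farber; $b_k(M_i)=b_k(\Gamma_i)$ by asphericity; and $\beta_k^{(2)}(\Gamma_0)=\vol(M_0)\,\beta_k^{(2)}(X)$. What this buys is a clean modular reduction: the corollary follows from the qualitative convergence statement of Theorem \ref{ABBG} together with classical theorems, without opening up the finer structure of the proofs in \cite{ABBG} or \cite{7S}.

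The one place where you are not actually proving anything is $X=\BH^3$. The assertion that ``one shows directly, using the uniform control of small eigenvalues available for sequences that are Benjamini--Schramm convergent to $X$'' that $b_k(M_n)/\vol(M_n)\to 0$ is not a proof: that uniform spectral control along Farber sequences of hyperbolic $3$-manifolds is precisely the hard analytic content of this case, and it is exactly what \cite[Theorem 1.8]{7S} establishes (this is also how the paper itself handles $\BH^3$ --- by citing that theorem). Your observation that $\beta_k^{(2)}(\BH^3)=0$ is correct, but the convergence to $0$ cannot be extracted from Theorem \ref{ABBG} (which excludes dimension three, and genuinely fails there without the Farber hypothesis, as the Dehn-filling example shows) nor from any soft argument of the interleaving type, since your method needs Theorem \ref{ABBG} as an engine and that engine is missing here. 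Replace the hand-wave by an explicit appeal to \cite[Theorem 1.8]{7S} (or reproduce its heat-kernel argument) and your proof is complete.
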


In the thin case, we were able to push our analytic methods far enough to give a proof for $X=\BH^d$, see \cite[Theorem 1.8]{7S}. Hence,  there is no problem in allowing $X=\BH^3$ in Corollary \ref{cor:betti}. The analog of Corollary \ref{cor:betti} for $p$-adic Bruhat Tits buildings is proved in \cite{non-archimedean}.

%%%%%%%%%%%%%%%%%%%%%%%%%%%%%%%%%

\section{Measures on the space of Riemannian manifolds}\label{sec:AB}

When $X=G/K$ is a symmetric space of noncompact type, say, the quotient of a discrete, torsion-free IRS $\Gamma$ of $G$ is a random $X$-manifold $M$.  Fixing a base point $p$ in $X$, the projection of $p$ to $\Gamma \backslash X$ is a natural base point for the quotient. So, we can regard the quotient of an IRS as a random pointed $X$-manifold. In fact, the conjugation invariance of $\Gamma$ directly corresponds to a property called \emph{unimodularity} of the random pointed X-manifold, just as IRSs of discrete groups correspond to unimodular random Schreier graphs.

In \cite{AbBi}, the Abert and Biringer study unimodular probability measures on the more general space $\mathcal M^d$ of all pointed Riemannian $d$-manifolds, equipped with the smooth topology.  One can construct such unimodular measures from finite volume $d$-manifolds, or from IRSs of continuous groups as above (see \cite[Proposition 1.9]{AbBi}). Under certain geometric assumptions like pinched negative curvature or local symmetry, they show that sequences of unimodular probability measures are precompact, in parallel with the compactness of the space of IRSs of a Lie group, see \cite[Theorems 1.10 and 1.11]{AbBi}.  They also show that unimodular measures on $\mathcal M^d$ are just those that are `compatible' with its foliated structure. Namely, $\mathcal M^d$ is almost a foliated space, where a leaf is obtained by fixing a manifold $M$ and varying the basepoint. While this foliation may be highly singular, they show in \cite[Theorem 1.6]{AbBi} that after passing to an (actually) foliated desingularization, unimodular measures are just those that are created by integrating the Riemannian measures on the leaves against some invariant transverse measure. This is a precise analogue of the hard-to-formalize statement that a unimodular random graph is a random pointed graph in which the vertices are `distributed uniformly' across each fixed graph.

%%%%%%%%%%%%%%%%%%%%%%%%%%%%%%%%%

\section{Soficity of IRS}

\begin{definition}
An IRS $\mu$ is {\it co-sofic} if it is a weak-$*$ limit in $\text{IRS}(G)$ of ones supported on lattices.
\end{definition}

The following result justify the name (cf. \cite[Lemma 16]{AGN}):

\begin{prop}
Let $F_n$ be the free group of rank $n$.
A Dirac mass $\delta_N,~N\lhd F_n$ is co-sofic iff the corresponding group $G=F_n/N$ is sofic.
\end{prop}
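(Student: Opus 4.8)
The plan is to translate weak-$*$ convergence of invariant random subgroups to $\delta_N$ into a statement about fixed-point statistics of finite permutation actions of $F_n$, and to recognize the latter as the standard definition of soficity of $G=F_n/N$. Write $x_1,\dots,x_n$ for the free generators. For the discrete group $F_n$ a ``lattice'' is a finite-index subgroup $\Lambda$, and the associated IRS $\mu_\Lambda$ is exactly the stabilizer IRS of the transitive action of $F_n$ on the finite set $F_n/\Lambda$ with the uniform measure; more generally, any finite $F_n$-set $(V,\mathrm{unif})$ produces a stabilizer IRS supported on finite-index subgroups. The first thing I would record is the shape of the Chabauty topology: a basic neighbourhood of $N$ in $\sub(F_n)$ is $\{H : H\cap F = N\cap F\}$ for a finite set $F\subset F_n$, and each set $\{H : w\in H\}$ is clopen. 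Hence for any sequence of IRSs one has $\mu_k\to\delta_N$ iff $\mu_k(\{H : w\in H\})\to \mathbf 1_N(w)$ for every $w\in F_n$. When $\mu_k$ is the stabilizer IRS of a finite action $\psi_k\colon F_n\to\Sym(V_k)$, this probability equals $|\mathrm{Fix}_{V_k}(\psi_k(w))|/|V_k|$, so on finite actions the convergence $\mu_k\to\delta_N$ becomes literally
$$\frac{|\mathrm{Fix}_{V_k}(\psi_k(w))|}{|V_k|}\xrightarrow[k\to\infty]{}\mathbf 1_N(w)\qquad\text{for every }w\in F_n.$$

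Second, I would reduce ``co-sofic'' to convergence along finite actions. Every conjugation-invariant probability measure on the countable set of finite-index subgroups of $F_n$ is a countable convex combination of the orbital measures $\mu_\Lambda$; truncating to finitely many terms and rationalising the weights realises it, up to arbitrarily small weak-$*$ error, as the stabilizer IRS of a finite disjoint union of coset actions. Thus the IRSs supported on lattices are weak-$*$ approximable by stabilizer IRSs of finite $F_n$-actions, and $\delta_N$ is co-sofic iff it is a weak-$*$ limit of such finite-action IRSs, i.e.\ iff there exist finite $F_n$-sets $V_k$ satisfying the displayed fixed-point condition.

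Third, I would match the fixed-point condition with soficity, exploiting that $F_n$ is free. For soficity $\Rightarrow$ co-sofic, given a sofic approximation $\sigma_k\colon G\to\Sym(V_k)$ (unital, asymptotically multiplicative, asymptotically free), I define a genuine homomorphism $\psi_k\colon F_n\to\Sym(V_k)$ by $\psi_k(x_i)=\sigma_k(\overline{x_i})$. For each fixed word $w$, asymptotic multiplicativity forces $\psi_k(w)$ and $\sigma_k(\overline w)$ to agree off a vanishing fraction of $V_k$, whence asymptotic freeness yields the fixed-point condition. Conversely, given finite actions $\psi_k$ realising that condition, I set $\sigma_k(g)=\psi_k(w_g)$ for a chosen lift $w_g$ of $g$; asymptotic freeness is immediate since $w_g\notin N$ for $g\ne 1$, while asymptotic multiplicativity follows because $w_g w_h$ and $w_{gh}$ differ by an element of $N$, whose image under $\psi_k$ fixes almost every point. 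This exhibits $G$ as sofic.

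The main obstacle will be the uniform bookkeeping of normalised Hamming errors: asymptotic multiplicativity has to be propagated along words of bounded but arbitrary length and through inverses, so I must check that for each fixed $w$ the accumulated error still tends to $0$ — which holds because only finitely many multiplications are involved for a fixed word. The only other delicate point is the density reduction of the second step, namely verifying that truncating and rationalising a convex combination of orbital measures $\mu_\Lambda$ genuinely yields a finite-action stabilizer IRS close to the target in the weak-$*$ topology.
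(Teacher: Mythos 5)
Your proof is correct, but there is nothing in the paper to compare it against: the proposition is stated with only a pointer, ``(cf.\ \cite[Lemma 16]{AGN})'', and the history section merely sketches the dictionary between IRSs of $F_n$ and unimodular random Schreier graphs, so you have supplied a complete argument where the paper defers to the literature. Your route is the standard one behind that citation, and the steps you flag as delicate do go through. For the Chabauty/weak-$*$ step: since $\sub(F_n)$ carries the topology induced from $\{0,1\}^{F_n}$ and the limit is a \emph{Dirac} mass, convergence of the marginals $\mu_k(\{H : w\in H\})\to\mathbf{1}_N(w)$ really does imply $\mu_k\to\delta_N$, because a basic neighbourhood $\{H : H\cap F=N\cap F\}$ of $N$ is a finite intersection of sets each of whose $\mu_k$-measures tends to $1$ (this equivalence would fail for a non-Dirac limit, so it is worth saying explicitly that atomicity of the limit is what is being used). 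For the density step: an IRS supported on finite-index subgroups assigns equal mass to conjugate subgroups and each conjugacy class is finite, so it is a countable convex combination $\sum_i c_i\mu_{\Lambda_i}$ of the uniform orbital measures; after truncating and taking rational weights $p_i/q$, the combination $\sum_i (p_i/q)\mu_{\Lambda_i}$ is realized exactly as the stabilizer IRS of the disjoint union of $p_i L/[F_n:\Lambda_i]$ copies of $F_n/\Lambda_i$ with $L=\mathrm{lcm}_j\,[F_n:\Lambda_j]$, and total-variation closeness gives weak-$*$ closeness. The final translation to sofic approximations, including the observation that $w_gw_hw_{gh}^{-1}\in N$ converts the fixed-point condition into asymptotic multiplicativity and that for a fixed word only boundedly many Hamming errors accumulate, is the usual Elek--Szab\'o-type equivalence and is carried out correctly.
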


Given a group $G$ it is natural to ask:

\begin{ques}\label{ques:sofic}
Is every IRS in $G$ co-sofic?  
\end{ques}

In particular for $G=F_n$ this is equivalent to the Aldous--Lyons conjecture that every unimodular network (supported on rank $n$ Schreier graphs) is a limit of ones corresponding to finite Schreier graphs \cite{AL}.

Therefore it is particularly intriguing to study Question \ref{ques:sofic} for $G$, a locally compact group admitting $F_n$ as a lattice. This is the case for
$G=$$SL_2(\RR),SL_2(\QQ_p)$ and $\text{Aut}(T)$.

%%%%%%%%%%%%%%%%%%%%%%%%%%%%%%%%%

\section{Exotic IRS}

In the lack of Margulis' normal subgroup theorem there are IRS supported on non-lattices. Indeed, from a lattice $\gC\le G$ and a normal subgroup of infinite index $N\lhd \gC$ one can cook an IRS in $G$ supported on the closure of the conjugacy class $N^G$.

A more interesting example in $\SO(n,1)$ (from \cite{7B}) is obtained by choosing two compact hyperbolic manifolds $A,B$ with totally geodesic boundary, each with two components, and all four components are pairwise isometric and then glue random copies of $A,B$ along an imaginary line to obtain a random hyperbolic manifold whose fundamental group is an IRS in 
$\SO(n,1)$. If $A,B$ are chosen wisely, the random subgroup obtained is not contained in a lattice. However, all IRSs obtained that way are co-sofic. Other constructions of exotic IRS in $\SO(3,1)$ are given in \cite{7B}.

%%%%%%%%%%%%%%%%%%%%%%%%%%%%%%%%

\section{Existence}

There are many well known examples of discrete groups without nontrivial IRS, for instance $\PSL_n(\BQ)$, and also the Tarski Monsters. In \cite[\S 8]{Lecture} I asked for non-discrete examples, and in particular weather the Neretin group (of almost adthomorphisms of a regular tree) admits non-trivial IRS. Recently  Le-Boudec and Matte-Bon \cite{LeBMaB} constructed an example of a non-discrete locally compact group with no non-trivial IRS. (Note however that it is not compactly generated.)

\section{Character Rigidity}\label{sec:characters}

\begin{definition}
	\label{def:character}
	Let $\gC$ be a discrete group.
	A \emph{character} on $\Gamma$ is an irreducible positive definite complex-valued class function $\varphi : \Gamma \to \BC$ satisfying $\varphi(e) = 1$.
\end{definition}

The irreducibility of $\varphi$ simply means that it cannot be written as  a convex combination of two distinct characters. This notion was introduced by Thoma in \cite{thomaA, thomaB}. In the abelian case Definition \ref{def:character} reduces to the classical notion. 
% The space of all characters of $\Gamma$ is a Choquet simplex, which means that every normalized positive definite class function can written as a linear combination of characters in a unique way.

We will say that $\Gamma$ has \emph{Character rigidity} if  only the obvious candidates occur as characters of $\Gamma$.
The following theorem of Bekka \cite{bekka} is an outstanding example of such a result.

\begin{theorem}
\label{thm:Bekka character rigidity}
Let $\varphi$ be a character of the group $\Gamma = \SL_n(\BZ)$ for $n \ge 3$. Then  either $\varphi$ factors through an irreducible representation of some finite congruence quotient $\SL_n(\BZ/N\BZ)$ or $\varphi$ vanishes outside the center of $\Gamma$.
\end{theorem}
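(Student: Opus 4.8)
The plan is to translate the statement about characters into the language of tracial von Neumann algebras and then exploit the higher-rank rigidity of $\Gamma=\SL_n(\BZ)$, in much the same spirit as the Stuck--Zimmer theorem of Section~\ref{sec:SZ}. First I would run the Gelfand--Naimark--Segal construction on the positive-definite class function $\varphi$: this produces a cyclic representation $(\pi_\varphi,\mathcal H_\varphi,\xi_\varphi)$ together with the finite von Neumann algebra $M=\pi_\varphi(\Gamma)''$ and the faithful normal trace $\tau(x)=\langle x\xi_\varphi,\xi_\varphi\rangle$, satisfying $\varphi(\gamma)=\tau(\pi_\varphi(\gamma))$. Irreducibility of $\varphi$ (its being an extreme point among characters) translates into $M$ being a factor. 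The two alternatives in the theorem correspond to the two ``pure'' possibilities for such a factor: a finite-dimensional factor, which forces $\pi_\varphi$, and hence $\varphi$, to factor through a finite quotient; versus the case where $\pi_\varphi$ is, off the center, a multiple of the regular representation, giving $\varphi$ supported on $Z(\Gamma)$. The entire content is to rule out any intermediate behaviour.

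To do this I would use the abundant arithmetic structure of $\Gamma$ for $n\ge 3$. Inside $\Gamma$ one finds copies of the semidirect product $H=\SL_2(\BZ)\ltimes\BZ^2$ (sitting in a $3\times 3$ block, with $\BZ^2$ a root group), and the pair $(H,\BZ^2)$ has \emph{relative property $(T)$} by the Kazhdan--Burger argument. Restricting $\varphi$ to the abelian subgroup $A\cong\BZ^2$, positive-definiteness together with Bochner's theorem yields a Borel probability measure $\mu$ on the Pontryagin dual $\widehat A\cong\BT^2$ with $\varphi|_A=\widehat\mu$. Since $\varphi$ is a class function, conjugation by the $\SL_2(\BZ)$-part of $H$ leaves $\varphi|_A$ invariant, so $\mu$ is an $\SL_2(\BZ)$-invariant probability measure on $\BT^2$.

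The key rigidity input is the classification of such measures: every $\SL_2(\BZ)$-invariant ergodic probability measure on $\BT^2$ is either Haar measure or the uniform measure on a finite orbit of rational (hence torsion) points. One proves this by observing that $\widehat\mu$ is constant along $\SL_2(\BZ)$-orbits in $\BZ^2$, that the nonzero primitive vectors form a single infinite orbit, and that boundedness of the Fourier coefficients then forces the continuous part of $\mu$ to be Haar while the atomic part lives on $(\BQ/\BZ)^2$. Relative property $(T)$ does the decisive upgrading: it guarantees that the $M$-level data is governed by $\mu$ near the trivial character and, together with factoriality (extremality of $\tau$), forbids $\mu$ from being a genuine mixture, so it must be \emph{purely} Haar or \emph{purely} atomic at torsion points. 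A Haar component corresponds to $\pi_\varphi|_A$ having no finite-dimensional constituents and, ultimately, to $\varphi$ vanishing off the center; atoms at torsion points mean $A$ acts through a finite quotient $(\BZ/N\BZ)^2$, i.e.\ $\varphi$ factors through a congruence quotient.

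Finally I would globalize the dichotomy from one root block to all of $\Gamma$. Since $\SL_n(\BZ)$, $n\ge 3$, is generated by its elementary root subgroups, all of which are conjugate into such $H$'s, the property of factoring through a congruence quotient, once established on one $A$, propagates; alternatively, $\ker\pi_\varphi$ is a normal subgroup, so Margulis' normal subgroup theorem (Theorem~\ref{theorem:normal subgroup theorem}) already restricts it to being central or of finite index, and the congruence subgroup property pins the finite-index case to a congruence quotient. The main obstacle is precisely this interface between the measure classification and the operator-algebra level: using relative property $(T)$ together with factoriality to force the \emph{clean} alternative (no mixtures), and then propagating it coherently across all root subgroups. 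This is the technical heart of the argument and the place where higher rank ($n\ge 3$) is indispensable.
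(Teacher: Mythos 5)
First, a point of reference: the paper does not prove this statement at all --- it is quoted as Bekka's theorem \cite{bekka}, so there is no internal proof to compare against. Your outline does resemble the strategy of the actual proofs in the literature (Bekka's original argument, and the later Peterson--Thom approach): GNS construction, factoriality from extremality, restriction to a root subgroup $\BZ^2$ normalized by a copy of $\SL_2(\BZ)$, Bochner's theorem, relative property $(T)$ for $(\SL_2(\BZ)\ltimes\BZ^2,\BZ^2)$, and the congruence subgroup property together with the normal subgroup theorem in the finite-dimensional case. So the skeleton is the right one.

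However, the decisive steps are asserted rather than proved, and at least one is proved incorrectly. (i) Your one-line justification of the measure classification --- that $\widehat\mu$ is constant on the infinite $\SL_2(\BZ)$-orbits in $\BZ^2$ and ``boundedness of the Fourier coefficients then forces the continuous part of $\mu$ to be Haar'' --- is not a proof: Fourier coefficients of a probability measure are automatically bounded, and constancy on infinite orbits is compatible with many candidate coefficient sequences; the statement that every non-atomic $\SL_2(\BZ)$-invariant measure on $\BT^2$ is Haar is a genuinely hard theorem (for the stronger stationary-measure version it is Bourgain--Furman--Lindenstrauss--Mozes), with no elementary Fourier-theoretic proof known. (ii) The ``purity'' step is unjustified: factoriality of $\pi_\varphi(\Gamma)''$ does \emph{not} make the Bochner measure $\mu$ ergodic (characters factoring through $\SL_n(\BZ/N\BZ)$ already give atomic but generally non-ergodic $\mu$), so ``relative $(T)$ plus extremality forbids mixtures'' is precisely the technical content you would need to supply, not a remark. (iii) The globalization is missing: even granting that $\mu$ is Haar on each root subgroup $A$, this only gives $\varphi(a)=0$ for $a\in A\setminus\{e\}$; characters are not multiplicative, so vanishing on the (generating) root subgroups does not propagate to vanishing on all non-central elements of $\Gamma$. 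This last passage is where Bekka's proof does serious work (asymptotics of conjugation by hyperbolic/diagonal elements and bounded generation of $\SL_n(\BZ)$ by elementaries), and your alternative route via Margulis' normal subgroup theorem does not substitute for it: knowing $\ker\pi_\varphi$ is central or of finite index says nothing about a faithful character that fails to vanish off the center. As it stands, the proposal is a correct roadmap with the three hardest segments left blank or filled with an invalid argument.
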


The connection between invariant random subgroups and characters arises from the following construction.  Let $(X,\mu)$ be a Borel probability space with an action of $\Gamma$  preserving $\mu$. Consider the following  real-valued function $\varphi : \Gamma \to \RR$ that is   associated to the action of $\Gamma$. The function $g$ is given by 
$$ \varphi(\gamma) = \mu(\mathrm{Fix}(\gamma))$$
for every $\gamma \in \Gamma$, where 
$$\mathrm{Fix}(\gamma) = \{x \in X \, : \, \gamma x = x \}.$$
For instance $\varphi(\gamma) = 1$ if  $\gamma$ lies in the kernel of the action  and $\varphi(\gamma) = 0$ if $\mu$-almost every point of $X$ is not fixed by $\gamma$. It turns out that $\varphi$ is a positive define class function satisfying $\varphi(e) = 1$.  

Let $\Gamma$ be an irreducible lattice in a higher rank semisimple linear group $G$ with property (T). It can be shown by means of induced actions that Theorem \ref{thm:IRS rigidity} holds for the lattice $\Gamma$ as well, namely any properly ergodic action of $\Gamma$ has central stabilizers.
%Any probability measure preserving action action of $\Gamma$ can be induced to that of $G$. Therefore 

We see that Theorem \ref{thm:Bekka character rigidity} in fact implies Theorem \ref{thm:the Stuck Zimmer theorem} in the special case of the particular arithmetic group $\Gamma = \SL_{n}(\BZ), n \ge 3$, which in turn implies the normal subgroup theorem of Margulis\footnote{The normal subgroup theorem for $\mathrm{SL}_n(\mathbb{Z})$ with $n \ge 3$ is in fact a much older theorem, dating back to Mennicke's work on the congruence subgroup problem \cite{mennicke}.}.  A character rigidity result is in general much stronger than invariant random subgroups rigidity --- indeed, not all characters arise in the above manner from probability measure preserving actions.

Recently Peterson \cite{peterson} has been able to vastly generalize Bekka's result, as follows.

\begin{theorem}
\label{thm:Peterson Character}
Character rigidity in the sense of Theorem \ref{thm:Bekka character rigidity} holds for any irreducible lattice in a higher rank semisimple Lie group without compact factors and with property (T).
\end{theorem}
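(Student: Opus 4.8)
The plan is to translate the problem into tracial von Neumann algebras and then run a boundary-theoretic argument in the spirit of the proof of the Stuck--Zimmer theorem, but at the operator-algebraic level rather than on a probability space. First I would apply the GNS construction to the character $\varphi$: since $\varphi$ is positive definite with $\varphi(e)=1$ and is a class function, it produces a tracial von Neumann algebra $(M,\tau)$ together with a unitary representation $\pi\colon\Gamma\to\mathcal U(M)$ generating $M$, with $\varphi(\gamma)=\tau(\pi(\gamma))$ and $\tau(\pi(\eta\gamma\eta^{-1}))=\tau(\pi(\gamma))$. Conjugation by the $\pi(\eta)$ then defines a trace-preserving action $\Gamma\act M$, and irreducibility of $\varphi$ (extremality of $\tau$) is exactly factoriality of $M$. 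The two admissible families correspond to the two extreme outcomes: $M$ finite-dimensional with $\pi$ factoring through a finite quotient, versus $\pi(\gamma)$ scalar for every $\gamma$ outside $Z(\Gamma)$, i.e. $\varphi$ supported on the center. The goal is to show these are the only options.

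The heart of the matter is a noncommutative boundary argument. I would fix an admissible probability measure on $G$ and form the Furstenberg--Poisson boundary $(B,\nu)=G/P$, on which $\Gamma$ acts amenably and which is a $\Gamma$-boundary in Furstenberg's sense (metrically ergodic and contractive, with the strong higher-rank contraction features). Using amenability of $\Gamma\act(B,\nu)$ I would construct a $\Gamma$-equivariant measurable boundary map into the state space of $M$ (equivalently, a noncommutative Poisson boundary for the induced $\Gamma$-random walk valued in $M$, together with an equivariant conditional expectation). The commutative shadow of this picture is precisely the situation where $\varphi(\gamma)=\mu(\mathrm{Fix}(\gamma))$ comes from a p.m.p.\ action, and there the desired central-versus-finite-index dichotomy is exactly the content of Theorem \ref{thm:the Stuck Zimmer theorem}; the new work is to carry this through in the presence of genuine noncommutativity.

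To finish I would establish an operator-algebraic analogue of the Nevo--Zimmer intermediate factor theorem: any $\Gamma$-equivariant von Neumann subalgebra squeezed between the scalars and the boundary-extended algebra must be rigid. Property (T) enters exactly as in Proposition \ref{thm:property T implies not weakly amenable}: it forces the relevant equivariant/stationary states to be non-(weakly) amenable, so the higher-rank contraction dynamics of $B$ collapse the boundary map to an essentially constant one. This rigidity yields the dichotomy: either the boundary acts trivially, forcing $M=Z(\Gamma)''$ and $\varphi$ to live on the center, or the boundary map concentrates on a single finite-dimensional piece, whence $\pi(\Gamma)$ is precompact in $M$ modulo the center; Margulis superrigidity, together with the congruence subgroup property where available, then identifies the finite image with an irreducible representation of a finite congruence quotient, as in Theorem \ref{thm:Bekka character rigidity}.

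The main obstacle is this noncommutative intermediate factor theorem. Transplanting Nevo--Zimmer from the measure-theoretic (commutative) setting to tracial von Neumann algebras requires controlling noncommutative conditional expectations, relative amenability of inclusions, and the interaction of $G$-boundary dynamics with a type $\mathrm{II}_1$ factor that carries only a $\Gamma$-action rather than a full $G$-action. One must show that the contraction properties of the higher-rank boundary still force equivariant states to be concentrated, despite the absence of any commuting diagonalizable structure to exploit. This is precisely where property (T) is indispensable and where essentially all of the analytic difficulty lies.
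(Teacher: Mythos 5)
First, a point of context: the paper contains no proof of Theorem \ref{thm:Peterson Character} --- it is a survey statement quoted from Peterson's preprint \cite{peterson} --- so your proposal can only be measured against Peterson's actual argument and against its own internal completeness. Your skeleton is the right one and does match the spirit of Peterson's proof: GNS gives a tracial von Neumann algebra $(M,\tau)$ with a trace-preserving $\Gamma$-action by conjugation, extremality of $\varphi$ corresponds to factoriality, and the dichotomy is to be extracted from a Margulis-style combination of (a) amenability of the $\Gamma$-action on the Furstenberg boundary $B=G/P$, which produces $\Gamma$-equivariant ucp maps from $L^\infty(B)$ into an algebra built from $M$, and (b) property (T), which obstructs such maps unless $M$ degenerates. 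So far, so good.

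The gap is that the entire content of the theorem sits inside the step you label ``the main obstacle'' and then skip: the noncommutative intermediate-factor/boundary-rigidity statement. Everything before it (GNS, factoriality) is routine, and everything after it presupposes it; reducing the theorem to an unproven operator-algebraic analogue of Nevo--Zimmer, which is essentially equivalent in difficulty to the theorem itself, is a research plan rather than a proof. Moreover, the endgame as you describe it does not close even granting that step. If the boundary map collapses, what you actually obtain is a $\Gamma$-invariant probability measure on $G/P$, and the contradiction must come from strong proximality of $\Gamma \act G/P$ together with nonamenability of $\Gamma$ --- it does not directly ``force $M=Z(\Gamma)''$''; the implication runs the other way (no invariant measure exists, so the non-degenerate branch is impossible unless $\varphi$ vanishes off the center). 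And in the finite-dimensional branch, ``$\pi(\Gamma)$ precompact modulo the center'' does not by itself yield a finite quotient: one needs Margulis superrigidity/the normal subgroup theorem for the finite-dimensional representation, and the \emph{congruence} quotient form you invoke is only correct where the congruence subgroup property is known --- for the general lattices covered by Theorem \ref{thm:Peterson Character} the right conclusion is a finite quotient, not a congruence one. These are fixable misstatements, but the missing rigidity theorem is not: it is where, as you yourself say, all the analytic difficulty lies, and it is exactly what Peterson proves.
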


Let us survey a few  other well-known classification results for characters of discrete groups. In his original papers Thoma studied characters of the infinite symmetric group \cite{thomaA}. 
Dudko and Medynets studied characters of the Higman---Thompson and related groups \cite{DM12}. Peterson and Thom establish character rigidity for linear groups over infinite fields or localizations of orders in number fields  \cite{PT16}, generalizing several previous results \cite{kirillov, ovchinnikov}.  

%Peterson considered characters for irreducible lattices in all higher rank semisimple Lie groups without compact factors and with property (T), thereby vastly generalizing Bekka's result, see .
%Therefore Bekka's result implies the Margulis normal subgroup theorem (Theorem \ref{theorem:normal subgroup theorem}) as well as the Stuck---Zimmer theorem for   the 
%
%For every infinite-index normal subgroup $N\nrm \Gamma$ the function $\delta_N : \Gamma \to \CC$, $\delta_N = 1_{|N}$ is a character \cite[V.7.9]{takesaki2002theory}.  

%It follows from the previous discussion moreover that Theorem \ref{thm:Bekka} implies Theorem \ref{thm:SZ for lattices with T}, namely invariant random subgroup rigidity, in this particular case.

%%%%%%%%%%%%%%%%%%%%%%%%%%%%%%%%%%%%

\section{History}\label{sec:history}
The interplay between a group theoretic and geometric viewpoints characterises the theory of IRS from its beginning. Two groundbreaking papers, Stuck-Zimmer \cite{SZ} and Aldous-Lyons \cite{AL} represent these two points of view. Zimmer's work, throughout, was deeply influenced by Mackey's virtual group philosophy which draws an analogy between the subgroups of \(G$ and its ergodic actions. When $G$ is a center free, higher rank simple Lie group, it is proved in 
\cite{SZ} that every non-essentially-free ergodic action is in fact a transitive action on the cosets of a lattice subgroup. 
%In terms of IRS they give a complete classification of ergodic IRS in such groups
%\footnote{Similar statements are proved for semisimple groups and their lattices under certain irreducibility conditions.}. 
These results can be viewed as yet another implementation of higher rank rigidity, but they also show that Mackey's analogy becomes much tighter when one considers non-essentially-free actions. 

The Aldous--Lyons paper is influenced by the geometric notion of {\it{Benjamini-Schramm}} convergence in graphs, sometimes also referred to as {\it{weak convergence}} or as {\it{convergence in local statistics}}, developed in \cite{AS},\cite{BS},\cite{BLPS}. Any finite graph\footnote{or more generally an infinite graph whose automorphism group contains a lattice.} gives rise to a random rooted graph, upon choosing the root uniformly at random.  Thus the collection of finite graphs, embeds as a discrete set, into the space of Borel probability measures on the (compact) space of rooted graphs. Random rooted graph in the \(w^{*}$-closure of this set are subject to the {\it{mass transport principal}} introduced by Banjamini and Schramm \cite{BS}: For every integrable function on the space of bi-rooted graphs
\[
 \int \sum_{x \in V(G)} f(G,o,x) d \mu([G,o]) = \int \sum_{x \in V(G)} f(G,x,o) d \mu([G,o]). 
\]
Aldous and Lyons define random unimodular graphs to be random rooted graphs subject to the mass transport principal. In \cite[Question 10.1]{AL} they ask whether every random unimodular graph is in the \(w^{*}\)-closure of the set of finite graphs. When one specialises this theory to Schreier graphs of a given finitely generated group \(\Gamma\) (more generally to the quotients of the Cayley-Abels graph of a given compactly generated group \(G\)) one obtains the theory of IRS in \(\Gamma\) or in \(G\). For probability measures on the Chabauty space of subgroups \(\sub_\Gamma\) --- the mass transport principal is equivalent to invariance under the adjoint action of the group. When \(\Gamma = F_d \) is the free group and \(N \lhd \Gamma\) is a normal subgroup the group \(\Gamma/N\) is {\it{sofic}} in the sense of Gromov and Weiss if and only if the IRS \(\delta_{N}\) is a \(w^{*}\)-limit of IRS supported on finite index subgroups. Thus the Aldous-Lyons question in the setting of Schreier graphs of \(F_d\) specializes to Gromov's question whether every group is sofic. 

In a pair of papers \cite{AGV1,AGV2}, Ab\'{e}rt, Glasner and Vir\'{a}g introduced the notion of IRS and used it to answer a long standing question in graph theory. A sequence \(\{X_n\}\) of finite, distinct $d$-regular Ramanujan graphs Benjamini--Schramm converges to the universal covering tree $T_d$. They provided a quantitative estimate for this result, for a Ramanujan graph $X$,
 \[
  \text{Pr} \{x \in X \ | \ \text{inj}_{X}(x) \le \beta \log \log (|X|) \} = O \left(\log(|X|)^{-\beta} \right), 
 \]
where $\beta = (30 \log(d-1))^{-1}$, \(\inj_X(x) = \max \{R \in \N \ | \ B_X(x,R) {\text{ is contractible}} \}\) and the probability is the uniform over the vertices of $X$. The proof combines the geometric and group theoretic viewpoints in an essential way: They start with a sequence of Ramanujan (Schreier) graphs \(\{X_n\}\). Passing if necessary to a subsequence they assume that \(X_n \rightarrow \Delta \backslash F_{d/2}\), where $\Delta$ is an IRS in $F_{d/2}$. Now the main technical result of their paper shows that the Schreier graph of an IRS has to satisfy Kesten's spectral gap theorem \(\rho(\Cay(\Gamma/\Delta,S)) \ge \rho(\Cay(\Gamma,S))\) with equality if and only if $\Delta = \langle e \rangle$ a.s. Thus the limiting object is indeed the tree. 

More generally they develop the theory of Benjamini--Shcramm limits of unimodular random graphs, as well as for $\Gamma$-Schreier graphs for arbitrary finitely generated group $\Gamma$. In this case the IRS version of Kesten's theorem reads \(\rho(\Cay(\Gamma/\Delta,S)) \ge \rho(\Cay(\Gamma,S))\), with an (a.s.) equality, iff $\Delta$ is (a.s.) amenable. In hope of reproducing this same beautiful picture for general finitely generated groups, Ab\'{e}rt, Glasner and Vir\'{a}g phrased a fundamental question that was quickly answered by Bader, Duchesne and Lecureux \cite{BDL} giving rise to the following theorem: Every amenable IRS in a group $\Gamma$ is supported on the subgroups of the amenable radical of $\Gamma$.

Independently of all of the above, Lewis Bowen in \cite{Bo2}, introduced the notion of an IRS, and of the {\it{Poisson boundary relative to an IRS}}. He used these notions to solve a long standing question in dynamics --- proving that the Furstenberg entropy spectrum of the free group is a closed interval. Let \((G,\mu)\) be a locally compact group with a Borel probability measure on it, and \((X,\nu)\) a \((G,\mu)\) space. This means that \(G \curvearrowright X\) acts on \(X\) measurably and \(\nu\) is a \(\mu\)-stationary probability measure in the sense that \(\nu = \mu * \nu\). The Furstenberg entropy of this space is 
\[ h_{\mu} (X,\nu) = \int \int - \log \frac{d \eta \circ g}{d \eta} (x) d \eta (x) d \mu (g).\]
\(\Spec(G,\mu) := \{ h_{\mu}(X,\nu) \ | \ (X,\nu) {\text{ an ergodic }} (G,\mu){\text{-space}}\}\) is called {\it{the Furstenberg entropy spectrum}} and it is bounded in the interval \([0,h_{max}(\mu)]\). The value \(0\) is obtained when the action is measure preserving, and the maximal value is always attained by the Poisson boundary \(B(G,\mu)\). The study of the entropy spectrum is tightly related to the study of factors of the Poisson boundary. Nevo and Zimmer, \cite{NZ2},  consider a restricted spectrum, that comes only form actions subject to certain mixing properties and show that this restricted spectrum $\Spec'(G)$ is finite for a centre free, higher rank semisimple Lie group $G$. This result was then used in their proof of the intermediate factor theorem, which in retrospect also validated the proof of the Stuck-Zimmer theorem \cite{NZ1,NZ3,NZ4}. 
%A softer result is Nevo's enropy gap theorem \cite{N}. Every group with Kazhdan's property (T) has an {\it{entropy gap}}, namely some interval \((0,\epsilon(G,\mu))\) which is disjoint from the entropy spectrum. This kind of entropy gap was later shown to be equivalent to property (T) for countable groups in \cite{BHT}.
Bowen's work on IRS filled in a gap in the other direction --- providing as it did many examples of stationary actions. 

Let $K \in \Sub(G)$. The Poisson boundary $B(K \backslash G,\mu)$ is the (Borel) quotient of the space of all $\mu$-random walks on $K \backslash G$ under the shift $\sigma (Kg_0,Kg_1,Kg_2, \ldots) = (Kg_1,Kg_2, \ldots)$. If $\Delta=N$ happens to be normal then one retains the Kaimanovich--Vershik description of the Poisson boundary on $G/N$ and clearly $G$ acts on this space from the left giving it the structure of a \((G,\mu)\)-space. In the more general setting introduced by Bowen, $G$ still acts on the natural bundle over $\Sub(G)$, where the fibre over $K \in \Sub(G)$ is $B(K \backslash G,\mu)$. The natural action of $G$ on the space of all walks on all these coset spaces, given by $g (Kg_1,Kg_2,\ldots) = (gKg^{-1}gg_1,gKg^{-1}gg_2,\ldots)$ clearly commutes with the shift and gives rise to a well defined action of \((G,\mu)\) on this bundle. Any choice of an IRS \(\theta \in \IRS(G)\) gives rise to a \((G,\mu)\)-stationary measure on this bundle. Now for $F_d = \langle s_1,\ldots,s_d\rangle$, $\mu = \frac{1}{2d} \left(\sum_{i =1}^{d} s_i+s_{i}^{-1}\right)$ the proof that $\Spec(F_d,\mu) = [0,h_{\max}(\mu)]$ is completed by finding a certain path $\alpha: I: \rightarrow \IRS(F_d)$ in the space of IRSs with the following properties: (i) the path starts at the trivial IRS (corresponding to the action on the Poisson boundary), (ii) it ends at an IRS giving rise to arbitrarily small entropy values and (iii) the entropy function is continuous on this path. Continuity of the entropy function is very special and so are the IRS that are chosen in order to allow for this continuity. The existence of paths on the other hand is actually general, in \cite{Bo1} Bowen proves that the collection of ergodic IRS on $F_d$ that are not supported on finite index subgroups is path connected.

Vershik \cite{Ver1,Ver2}, also indpendently, arrived at IRS from his study of the representation theory and especially the characters of $S^{\infty}_{f}$ --- the group of finitely supported permutations of a countable set. To an IRS $\mu \in \IRS(\Gamma)$ in a countable group define its {\it{Vershik character}} as follows
\[\phi_{\mu}: \Gamma \rightarrow \R_{\ge 0}, \qquad \phi_{\mu}(\gamma) = \mu \left(\{\Delta \in \Sub(\Gamma) \ | \ \gamma \in \Delta \} \right). \] 
If the IRS is realized as the stabilizer $\Gamma_x$ of a random point in a p.m.p. action $\Gamma \curvearrowright (X,\nu)$ (by \cite{AGV2}, every IRS can be realized in this fashion), the same IRS is given by $\phi_{\nu}(\gamma)= \nu(Fix(\gamma))$. Vershik also describes the GNS constuctions associated with this character. Let $R = \{(x,y)\in X \times X | y \in \Gamma x\}$ and let $\eta$ be the infinite measure on $R$ given by $\int f(x,y) \eta(x,y) = \int \sum_{y \in \Gamma x} f(x,y) d \mu(x).$ $\Gamma$ acts on $R$ via its action on the first coordinate $\gamma (x,y) = (\gamma x, y)$ and hence it acts on the Hilbert space $L_2(R,\eta)$. Let $\chi(x,y) = 1_{x=y} \in L_2(R,\eta)$ be the characteristic function of the diagonal. It is easy to verify that $phi_{\mu}(\gamma)= \langle \gamma \chi, \chi\rangle$. The defintion of the Vershik character clarified the deep connection between character rigidity in the snese of Connes and and the Stuck--Zimmer theorem.

%%%%%%%%%%%%%%%%%%%%%%%%%%%%%%%%%%
%%%%%%%%%%%%%%%%%%%%%%%%%%%%%%%%%%

\begin{acknowledgement}
%I was introduced to IRS around 2010 by Yair Glasner who also taught me to appreciate the Stuck--Zimmer paper. About the same time Miklos Abert introduced me to the notion of Benjamini--Schramm convergence (at that time `local convergence') expressing to me his vision which later evolved to the work \cite{7S}.
%\end{acknowledgement}

%\begin{acknowledgement}
Several people helped me in writing up parts of this paper: Ian Biringer with \S \ref{sec:AB}, Arie Levit with \S \ref{sec:characters} and \S \ref{sec:SZ} and Yair Glasner with \S \ref{sec:history}.
\end{acknowledgement}


\begin{thebibliography}{99999999}


\bibitem[7s12]{7S}
M. Abert, N. Bergeron, I. Biringer, T. Gelander, N. Nikolov, J. Raimbault, I.  Samet,  On the growth of $ L^ 2$-invariants for sequences of lattices in Lie groups. arXiv preprint arXiv:1210.2961, 2012.


\bibitem[7s17A]{7A}
M. Abert, N. Bergeron, I. Biringer, T. Gelander, N. Nikolov, J. Raimbault, I.  Samet,
On the growth of L2-invariants for sequences of lattices in Lie groups.  
Ann. of Math. (2) 185 (2017), no. 3, 711?790. 

\bibitem[7s17B]{7B}
M. Abert, N. Bergeron, I. Biringer, T. Gelander, N. Nikolov, J. Raimbault, I.  Samet,
On the growth of L2-invariants of locally symmetric spaces, II: exotic invariant random subgroups in rank one, arXiv:1612.09510

\bibitem[7s11]{7-note} M. Abert, N. Bergeron, I. Biringer, T. Gelander, N. Nikolov, J. Raimbault, I. Samet, On the growth of Betti numbers of locally symmetric spaces. C. R. Math. Acad. Sci. Paris, 349(15-16):831--835, 2011.

\bibitem[ABBG17]{ABBG} M. Abert, N. Bergeron, I. Biringer, T. Gelander, Convergence of normalized Betti numbers in nonpositive curvature, preprint.

\bibitem[AB16]{AbBi} M. Abert, I. Biringer, Unimodular measures on the space of all Riemannian manifolds, arXiv:1606.03360.

\bibitem[AGN15]{AGN} M. Abert, T. Gelander, N. Nikolov, Rank, combinatorial cost, and homology torsion growth in higher rank lattices. Duke Math. J. 166 (2017), no. 15, 2925--2964.  

\bibitem[AGV16]{AGV1} Ab\'{e}rt, M.; Glasner, Y and  Vir\'{a}g, B. {\it{The measurable Kesten theorem.}} Ann. Probab. 44 (2016), no. 3, 1601--1646. 

\bibitem[AGV14]{AGV2} Ab\'{e}rt, M; Glasner, Y and  Vir\'{a}g, B. {\it{Kesten's theorem for invariant random subgroups.}} Duke Math. J. 163 (2014), no. 3, 465--488. 

\bibitem[AL07]{AL} D. Aldous, R. Lyons, Processes on Unimodular Random Networks, Electron. J. Probab., 12, Paper 54 (2007), 1454--1508.

\bibitem[AS04]{AS} Aldous, D.J. and Steele, J.M. {\it{The objective method: probabilistic combinatorial optimization and local weak convergence.}} (2004) In Kesten, H., editor, Probability on Discrete Structures, volume 110 of Encyclopaedia Math. Sci., pages 1--72. Springer, Berlin. Probability Theory.

\bibitem[BDL14]{BDL} U. Bader, B. Duchesne, J. Lecureux. Amenable invariant random sub-groups, Israel Journal of Mathematics, to appear.

\bibitem[BCGM12]{BCGM} U. Bader, P.E. Caprace, T. Gelander, S. Mozes, Simple groups without lattices. Bull.
Lond. Math. Soc. 44 (2012), no. 1, 5517.

\bibitem[BGS16]{BGS16}
Uri Bader, Tsachik Gelander, and Roman Sauer, Homology and homotopy complexity in negative
curvature, arXiv preprint arXiv:1612.04871 (2016).

\bibitem[BSh06]{BSh}
Bader, Uri; Shalom, Yehuda Factor and normal subgroup theorems for lattices in products of groups. Invent. Math. 163 (2006), no. 2, 415--454.

\bibitem[BL04]{BL}
Y. Barnea, M. Larsen, M, Random generation in semisimple algebraic groups over local fields, Journal of Algebra, 2004, vol 271 (1), 1--10.

\bibitem[Be07]{bekka}
Bachir Bekka. Operator-algebraic superrigidity for SLn(Z),n ? 3. Inventions mathematicae, 169(2): 401--425, 2007.

\bibitem[BP92]{BP92}
Riccardo Benedetti and Carlo Petronio, Lectures on hyperbolic geometry, Universitext, Springer-Verlag,
Berlin, 1992.

\bibitem[BS01]{BS} I. Benjamini and O. Schramm. Recurrence of distributional limits of finite planar
graphs. Electron. J. Probab., 6:no. 23, 13 pp. (electronic), 2001.

\bibitem[BLPS99]{BLPS} I. Benjamini, R. Lyons, Y. Peres and O. Schramm, {\it{Group-invariant percolation on graphs.}} Geom. Funct. Anal. 9 (1999), no. 1, 29--66.

\bibitem[BM17]{LeBMaB} A. Le Boudec, N. Matte Bon
Locally compact groups whose ergodic or minimal actions are all free,  arXiv:1709.06733.

\bibitem[Bo14]{Bo2} L. Bowen. Random walks on random coset spaces with applications to Furstenberg entropy. Invent. Math., 196(2):485--510, 2014.

\bibitem[Bo15]{Bo3} L. Bowen, Cheeger constants and L2-Betti numbers, Duke Math. J. 164 (2015), no. 3, 569--615.

\bibitem[Bo16]{Bo1} L. Bowen. Invariant random subgroups of the free group, Groups Geom. Dyn. 9 (2015), no. 3, 891--916. 

%\bibitem[Bow1]{Bow1} Bowen, L {\it{Random walks on random coset spaces with applications to Furstenberg entropy.}} Invent. Math. 196 (2014), no. 2, 485?510.

\bibitem[BGK15]{BGK}
Bowen, Lewis; Grigorchuk, Rostislav; Kravchenko, Rostyslav Invariant random subgroups of lamplighter groups. Israel J. Math. 207 (2015), no. 2, 763--782.

\bibitem[DGO17]{DGO}  F. Dahmani, V. Guirardel, D. Osin, Hyperbolically embedded subgroups and rotating families in groups acting on hyperbolic spaces, arXiv:1111.7048.

\bibitem[DW78]{DW} D. L. de George and N. R. Wallach. Limit formulas for multiplicities in $L_2(\Gamma\backslash G)$, Ann. of
Math. (2), 107(1):133--150, 1978.

\bibitem[De86]{Del} P. Delorme. Formules limites et formules asymptotiques pour les multiplicites dans $L_2(\Gamma\backslash G)$, 
 Duke Math. J., 53(3):691--731, 1986.
 
\bibitem[DM12]{DM12} Artem Dudko and Konstantin Medynets. Finite factor representations of Higman- Thompson groups. arXiv preprint arXiv:1212.1230, 2012.
 
\bibitem[Fr16]{Fraczyk}
Mikolaj Fraczyk. Strong limit multiplicity for arithmetic hyperbolic surfaces and 3- manifolds. arXiv preprint arXiv:1612.05354, 2016.
 
 
\bibitem[Fu76]{Fur} H. Furstenberg. A note on Borel's density theorem. Proc. Amer. Math. Soc., 55(1):209--212, 1976.

\bibitem[Ge16]{WUD} T. Gelander, Kazhdan--Margulis theorem for invariant random subgroups, Adv. Math. (2017), http://dx.doi.org/j.aim.2017.06.011

\bibitem[Ge14]{PCMI} T. Gelander, Lectures on lattices and locally symmetric spaces. Geometric group theory, 249--282, IAS/Park City Math. Ser., 21, Amer. Math. Soc., Providence, RI, 2014.

\bibitem[Ge15]{Lecture} T. Gelander, A lecture on Invariant Random Subgroups, arXiv:1503.08402.

\bibitem[GL15]{local-rigidity}  T. Gelander, A. Levit, Local Rigidity Of Uniform Lattices, arXiv:1605.01693.

\bibitem[GL17]{non-archimedean} T. Gelander, A. Levit, Invariant random subgroups over non-archimedean local fields, arXiv:1707.03578.

\bibitem[Gl14]{Gla} Y. Glasner. Invariant random subgroups of linear groups. arXiv:1407.2872, 2014.

\bibitem[GW97]{GW} E. Glasner, B. Weiss, 
Kazhdan's property T and the geometry of the collection of invariant measures. (English summary) 
Geom. Funct. Anal. 7 (1997), no. 5, 917--935. 

%\bibitem[HP79]{HP}
%I. Pourezza, J. Hubbard, 
%The space of closed subgroups of $R^2$.
%Topology 18 (1979), no. 2, 143--146. 

\bibitem[Ku51]{Ku} M. Kuranishi,
On everywhere dense imbedding of free groups in Lie groups. 
Nagoya Math. J. 2, (1951). 63--71. 

\bibitem[Ka67]{Ka} D.A. Kazhdan, Connection of the dual space of a group with the structure of its closed subgroups, Functional Analysis and Application 1 (1967), 63--65.

\bibitem[KM68]{KM} D.Kazhdan, G.Margulis, A proof of Selberg's hypothesis, Mat. Sb. (N.S.) 75(117) (1968)
163--168.

\bibitem[Ki65]{kirillov}
Alexander A Kirillov. Positive definite functions on a group of matrices with elements from a discrete field. In Dokl. Akad. Nauk SSSR, volume 162, page 5, 1965.

\bibitem[Kl09]{Kl} B. Kloeckner, 
The space of closed subgroups of $R^n$ is stratified and simply connected.
J. Topol. 2 (2009), no. 3, 570--588.

\bibitem[Le14]{Levit} A. Levit, The Nevo--Zimmer intermediate factor theorem over local fields. Geom. Dedicata 186 (2017), 149--171.

\bibitem[Le17a]{levit2017benjamini} A. Levit, On Benjamini--Schramm limits of congruence subgroups,  arXiv:1705.04200.

\bibitem[Le17b]{Levit:wud} A. Levit, Weak uniform discreteness of lattices in positive characteristic, preprint.

\bibitem[M90]{Ma} G.A. Margulis, Discrete Subgroups of Semisimple Lie Groups, Springer-Verlag, 1990.

\bibitem[Me65]{mennicke}
Jens L Mennicke. Finite factor groups of the unimodular group. Annals of Mathematics, pages 31--37, 1965.

\bibitem[NZ99]{NZ1} Nevo, A and Zimmer, R.J. {\it{Homogenous projective factors for actions of semi-simple Lie groups.}} Invent. Math. 138(2), 229--252 (1999).

\bibitem[NZ00]{NZ2} Nevo, A and Zimmer, R.J. {\it{Rigidity of Furstenberg entropy for semisimple Lie group actions.}} Ann. Sci. Éc. Norm. Super. 33(3), 321--343 (2000).

\bibitem[NZ02]{NZ3} Nevo, A and Zimmer, R.J. {\it{ A structure theorem for actions of semisimple Lie groups.}} Ann. Math. 156(2), 565--594 (2002).

\bibitem[NZ02]{NZ4} Nevo, A and Zimmer, R.J. {\it{A generalization of the intermediate factors theorem}}. J. Anal. Math. 86, 93--104(2002).


\bibitem[O71]{ovchinnikov}
SV Ovchinnikov. Positive-definite functions on Chevalley groups. Functional Analysis and Its Applications, 5(1):79?80, 1971.

\bibitem[Pe14]{peterson}
Jesse Peterson. Character rigidity for lattices in higher-rank groups. Preprint.(http://www. math. vanderbilt. edu/peters10/rigidity. pdf), 2014.

\bibitem[PT14]{PT} J. Peterson, A. Thom. Character rigidity for special linear groups, Crelle's Journal, to appear.

\bibitem[PR94]{PR} V. Platonov, A. Rapinchuk, Algebraic Groups and Number Theory, Academic Press,
1994.

\bibitem[PT16]{PT16}
Jesse Peterson and Andreas Thom. Character rigidity for special linear groups. Journal fur die reine und angewandte Mathematik (Crelles Journal), 2016(716):207--228, 2016.

\bibitem[Rag72]{Rag}
M. S. Raghunathan, Discrete subgroups of Lie groups, Springer, New York, (1972).

\bibitem[Rai14]{Raimbault} J. Raimbault, On the convergence of arithmetic orbifolds, arXiv:1311.5375.

\bibitem[S13]{Ali} A. Salehi Golsefidy, Lattices of minimum covolume are non-uniform. Israel J. Math. 196 (2013), no. 1, 363--373.

\bibitem[Ser09]{sere}
Jean-Pierre Serre.
\newblock {\em {L}ie algebras and {L}ie groups}.
\newblock Springer, 2009.



\bibitem[SZ94]{SZ}
G. Stuck and R.J. Zimmer.
\newblock Stabilizers for ergodic actions of higher rank semisimple groups.
\newblock \emph{Annals of Mathematics}, pages 723--747, 1994.

\bibitem[Th64a]{thomaA}
Elmar Thoma. Die unzerlegbaren, positiv-definiten klassenfunktionen der abz?ahlbar un- endlichen, symmetrischen gruppe. Mathematische Zeitschrift, 85(1):40--61, 1964.

\bibitem[Th64b]{thomaB}
Elmar Thoma. Uber unitare darstellungen ab zahlbarer diskreten gruppen. Math. Ann, 153:139--157, 1964.

\bibitem[T49]{To} H. Toyama,
On discrete subgroups of a Lie group. 
%{Volume numbers not printed on issues until Vol. 7, (1955).} 
Kodai Math. Sem. Rep., 1, (1949). no. 2, 36--37.

\bibitem[Ver12]{Ver1} Vershik, A. M. {\it{Totally nonfree actions and the infinite symmetric group.}} Mosc. Math. J. 12 (2012), no. 1, 193--212, 216.

\bibitem[Ver11]{Ver2} Vershik, A. M. {\it{Nonfree actions of countable groups and their characters.}} (Russian) Zap. Nauchn. Sem. S.-Peterburg. Otdel. Mat. Inst. Steklov. (POMI) 378 (2010), Teoriya Predstavlenii, Dinamicheskie Sistemy, Kombinatornye Metody. XVIII, 5--16, 228; translation in J. Math. Sci. (N.Y.) 174 (2011), no. 1, 1--6


\bibitem[Zi84]{Zi} Zimmer, Robert J. Ergodic theory and semisimple groups. Springer, 1984.








\end{thebibliography}
\end{document}